\numberwithin{equation}{section}
\theoremstyle{plain}
\newtheorem{mthm}{Theorem}
\newtheorem{mprop}[mthm]{Proposition}
\newtheorem{theorem}{Theorem}[section]
\newtheorem{lemma}[theorem]{Lemma}
\newtheorem{proposition}[theorem]{Proposition}
\newtheorem{corollary}[theorem]{Corollary}
\theoremstyle{definition}
\newtheorem{definition}[theorem]{Definition}
\newtheorem*{definition*}{Definition}
\newtheorem{example}[theorem]{Example}
\newtheorem{remark}[theorem]{Remark}
\newtheorem{mdef}[mthm]{Definition}
\newtheorem{mex}[mthm]{Example}
\newcommand{\C}{\mathbb{C}}\newcommand{\cC}{\mathcal{C}}
\newcommand{\bF}{\mathbb{F}}
\newcommand{\cG}{\mathcal{G}}
\newcommand{\cI}{\mathcal{I}}
\newcommand{\bM}{\mathbb{M}}
\newcommand{\N}{\mathbb{N}}
\newcommand{\R}{\mathbb{R}}
\newcommand{\cS}{\mathcal{S}}
\newcommand{\cU}{\mathcal{U}}
\newcommand{\Z}{\mathbb{Z}}
\newcommand{\eps}{\varepsilon}
\newcommand{\vphi}{\varphi}
\newcommand{\norm}[1]{\left\|#1\right\|}
\newcommand{\abs}[1]{\left|#1\right|}
\newcommand{\sub}{\subseteq}
\newcommand{\ot}{\otimes}
\renewcommand{\iff}{\Leftrightarrow}
\newcommand{\Sym}{\operatorname{Sym}}
\newcommand{\id}{\operatorname{id}}
\newcommand{\G}{\mathcal{G}}
\newcommand{\U}{\operatorname{U}}
\newcommand{\dG}{\operatorname{d}_G}
\newcommand{\dGkn}{\operatorname{d}_{G_{k_n}}}
\newcommand{\dn}{\operatorname{d}_n}
\newcommand{\dkn}{\operatorname{d}_{k_n}}
\newcommand{\Dn}{\operatorname{D}_n}
\newcommand{\DGkn}{\operatorname{D}_{G_{k_n}}}
\newcommand{\dHamm}{\operatorname{d}_{\operatorname{Hamm}}}
\newcommand{\Tr}{\operatorname{Tr}}
\newcommand{\Sub}{\operatorname{Sub}}
\newcommand{\Stab}{\operatorname{Stab}}
\newcommand{\Fix}{\operatorname{Fix}}
\DeclareSymbolFont{Symbols}{OMS}{cmsy}{m}{n}
\DeclareMathSymbol{\Emptyset}{\mathord}{Symbols}{"3B}
\begin{document}

\title[Local HS-stability]
{Local Hilbert--Schmidt stability}

\author[F. Fournier-Facio, M. Gerasimova and P. Spaas]{Francesco Fournier-Facio, Maria Gerasimova and Pieter Spaas}
\address{Department of Pure Mathematics and Mathematical Statistics, University of Cambridge, United Kingdom}
\email{ff373@cam.ac.uk}
\address{Mathematical Institute, University of M\"unster, Einstenstrasse 62, 48149, M\"unster, Germany}
\email{mari9gerasimova@mail.ru}
\address{Department of Mathematical Sciences, University of Copenhagen, Universitetsparken 5, DK-2100 Copenhagen \O, Denmark}
\email{pisp@math.ku.dk}
\thanks{F.F.F. was supported by the Herchel Smith Postdoctoral Fellowship Fund. P.S. was partially supported by a research grant from the Danish Council for Independent Research, Natural Sciences, and partially by MSCA Fellowship No. 101111079 from the European Union. M.G.\ was supported by the DFG -- Project-ID 427320536 -- SFB 1442, and under Germany's Excellence Strategy EXC 2044 390685587, Mathematics M{\"u}nster: Dynamics--Geometry--Structure.}

\begin{abstract}
    We introduce a notion of local Hilbert--Schmidt stability, motivated by the recent definition by Bradford of local permutation stability,
    and give examples of (non-residually finite) groups that are locally Hilbert--Schmidt stable but not Hilbert--Schmidt stable.
    For amenable groups, we provide a criterion for local Hilbert--Schmidt stability in terms of group characters, by analogy with the character criterion of Hadwin and Shulman for Hilbert--Schmidt stable amenable groups.
    Furthermore, we study the (very) flexible analogues of local Hilbert--Schmidt stability, and we prove several results analogous to the classical setting. Finally, we prove that infinite sofic, respectively hyperlinear, property (T) groups are never locally permutation stable, respectively locally Hilbert--Schmidt stable. This strengthens the result of Becker and Lubotzky for classical stability, and answers a question of Lubotzky.
\end{abstract}

\maketitle

\section{Introduction}

Stability can be traced back to the famous question of whether two matrices that almost commute with respect to some given norm, must be close to two commuting matrices. This was first explicitly asked by Rosenthal \cite{rosenthal} for the normalized Hilbert--Schmidt norm and by Halmos \cite{halmos} for the operator norm. For the operator norm, the answer turned out to be negative for unitary matrices by the famous result of Voiculescu \cite{voiculescu:matrices}, but positive for self-adjoint matrices \cite{Lin, FR:Lin}. For the normalized Hilbert--Schmidt norm, the answer turns out to be positive \cite{Glebsky:commutingHS}. We refer to the introduction of \cite{ioana:comm} for an overview and further references. 

Note that the aforementioned questions for (almost) commuting unitary matrices can also be phrased in terms of (approximate) unitary representations of $\Z^2$, and in recent years there has been a surge of interest in various notions of \emph{stability} for more general groups. To make this precise, one considers a family of groups $\G$ endowed with bi-invariant metrics. The stability question for a group $\Gamma$ with respect to $\G$ then asks whether all ``approximate'' homomorphisms $\Gamma \to G \in \G$ are ``close'' to genuine homomorphisms, where distances are measured with respect to the given metrics on $G\in\G$. Two notable examples are the family $\cS$ of finite symmetric groups endowed with the normalized Hamming distance, and the family $\cU$ of finite-dimensional unitary groups endowed with the normalized Hilbert--Schmidt norm, i.e. the norm induced by the normalized trace. We refer to stability of $\Gamma$ with respect to these families as \emph{P-stability} and \emph{HS-stability}, respectively.

Despite the fact that stability is generally hard both to prove and disprove, there has been quite some progress in recent years, especially for HS-stability and P-stability. The study of HS-stability for general countable discrete groups was initiated by Hadwin and Shulman \cite{HS1,HS2}. They established a character criterion for amenable groups, and used it to establish HS-stability for, among others, virtually abelian groups. Their criterion has since been used to establish further families of examples of amenable HS-stable groups, including all virtually nilpotent groups \cite{ES:HS,levit:vigdorovich}. In the non-amenable realm, examples of HS-stable groups include virtually free groups \cite{GS:virtuallyfree}, certain graph product groups \cite{Atkinson:graphs}, one-relator groups with non-trivial center \cite{HS2}, and certain amalgamated free products and HNN extensions \cite{GS:amalgams}.

The first steps in the study of P-stability were already undertaken by Glesbky and Rivera in \cite{glebskyrivera}, who studied stability from the point of view of equations involving permutations. Arzhantseva and P\u{a}unescu observed in \cite{AP:commuting} that this problem can be rephrased in group-theoretic terms, introducing the current notion of stability, and proved that finitely generated abelian groups are P-stable. Since then, P-stability has been under intense investigation. 
For instance, we point out \cite{BLT:IRS}, where the authors prove a criterion for P-stability of amenable groups in terms of invariant random subgroups, which has since led to several new classes of examples \cite{zheng, infinitelypresented:Pstable, uncountably:Pstable}. 

Most negative examples are found in the realm of property (T) groups. Firstly, Becker and Lubotzky proved in \cite{BL:T} that infinite sofic groups with property (T) are never P-stable, and similarly that infinite hyperlinear groups with property (T) are never HS-stable. They establish this via a small perturbation in the degree of the involved symmetric groups, respectively the dimension of the matrix algebras, which prompted them to define the notion of \emph{flexible stability}. This has since become a subject of study as well, see, e.g. \cite{surface, GS:amalgams}. Whereas flexible stability of most property (T) groups is open, it was established in \cite{ISW:Coh} that $\Z^2\rtimes\operatorname{SL}_2(\Z)$ is not flexibly HS-stable. We also mention \cite{ioana:comm}, where Ioana proves that $\bF_2\times\bF_2$ is not flexibly HS-stable, which as far as the authors know is the only example that does not require some version of property (T).

An elementary observation due to Arzhantseva and P\u{a}unescu \cite{AP:commuting} is that a sofic P-stable group must be residually finite. Similarly, a hyperlinear HS-stable group must be MAP, hence by Mal'cev's Theorem \cite{malcev} it must be residually finite if it is finitely generated.
Finding a non-residually finite stable group is therefore a possible strategy towards finding a non-sofic or a non-hyperlinear group. This strategy was achieved for approximability by some other families of metric groups \cite{frobenius:approx, shatten:approx}, yet is wide open for soficity and hyperlinearity (though see e.g. \cite{bowen:burton,alon} for some connections). Nevertheless, the study of P-stability and HS-stability has largely been restricted to residually finite groups, thus leaving many relevant groups outside of its scope.

The main goal of this paper is to initiate the study of \emph{local HS-stability}, by analogy with local P-stability introduced by Bradford, cf. \cite[Problem 6.6]{bradford}. Here one only requires approximate homomorphisms to be close in normalized Hilbert--Schmidt norm to so-called \emph{partial homomorphisms}:

\begin{mdef}[{see Definitions~\ref{def:partialapprox} and \ref{def:stable}}]
    A countable discrete group $\Gamma$ is \emph{locally HS-stable} if for any sequence of maps $\vphi_n:\Gamma\to \U(k_n)$ such that 
    \[
    \forall g,h\in\Gamma: \norm{\vphi_n(gh) - \vphi_n(g)\vphi_n(h)}_2\to 0,
    \]
    there exists a sequence of maps $\psi_n:\Gamma\to\U(k_n)$ such that
    \begin{enumerate}
        \item For all $g,h\in\Gamma$, there exists $N\in\N$ such that for all $n\geq N$: $\psi_n(gh) = \psi_n(g)\psi_n(h)$.
        \item For all $g\in\Gamma$: $\norm{\vphi_n(g) - \psi_n(g)}_2\to 0$.
    \end{enumerate}
\end{mdef}

Notably, locally HS-stable hyperlinar groups (or locally P-stable sofic groups) need not be residually finite, and instead only satisfy a weaker property, namely \emph{local embeddability into finite groups (LEF)} \cite{LEF}, see Definition~\ref{def:LEF} and Lemma~\ref{lem:stable:LEF}. Intuitively, a group is LEF if there are maps to finite groups for which the homomorphism relation holds on larger and larger finite subsets, but not necessarily on the entire group. Many groups of interest are LEF, and in many ways it is a more robust notion than residual finiteness. For instance, the standard wreath product of two LEF groups is LEF \cite{LEF}, while the standard wreath product of two infinite residually finite groups is only residually finite when the base group is abelian \cite{gruenberg}. The notion of LEF is often useful when studying approximation properties of finitely generated groups that are not finitely presented, such as topological full groups of minimal Cantor systems \cite{GM:TopFullGp}.

Similar to ``classical'' HS-stability (and P-stability), the study of local HS-stability is a little more concrete in the amenable setting. Specifically, our first main result gives a criterion for local HS-stability of amenable groups in terms of characters, in line with the analogous result for HS-stability \cite[Theorem~4]{HS2}. We refer the reader to Sections~\ref{sec:prelim} and \ref{sec:stability} for the relevant definitions.

\begin{mthm}[Theorem \ref{thm:char}]
\label{intro:thm:char}

Let $\Gamma$ be a countable amenable group with a fixed epimorphism $\pi : \bF \to \Gamma$ where $\bF$ is a free group. Then $\Gamma$ is locally HS-stable if and only if the following holds:

For every character $\chi : \Gamma \to \C$ there exists a sequence $(\Lambda_n)_n$ of $\bF$-marked groups converging to $\Gamma$, together with finite-dimensional representations $\theta_n : \Lambda_n \to \U(k_n)$ such that $\tau_{k_n} \circ \theta_n$ converges pointwise to $\chi$, when all are viewed as characters of $\bF$.
\end{mthm}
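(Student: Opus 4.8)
The plan is to mirror the Hadwin--Shulman argument \cite{HS2}, using amenability to translate between approximate homomorphisms $\Gamma\to\U(k_n)$ and characters. I would first set up the dictionary: given a sequence $\vphi_n\colon\Gamma\to\U(k_n)$ with $\norm{\vphi_n(gh)-\vphi_n(g)\vphi_n(h)}_2\to0$, the functionals $g\mapsto\tau_{k_n}(\vphi_n(g))$ are asymptotically positive-definite, so any pointwise cluster point (which exists after passing to an ultrafilter, or a subsequence by a diagonal argument since $\Gamma$ is countable) is a character $\chi$ of $\Gamma$. Conversely, and this is where amenability enters, every character of an amenable group is a pointwise limit of traces of finite-dimensional representations of \emph{quotients} of $\Gamma$ — more precisely, by the amenable version of the argument, $\chi$ is approximated by traces coming from finite-dimensional representations of $\bF$-marked groups converging to $\Gamma$. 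The key technical tool throughout is the standard fact that two unitaries $u,v\in\U(k)$ with $\tau_k(u^*v)$ close to $1$ are close in $\norm{\cdot}_2$, and its converse; combined with amenability, this lets one pass from ``traces agree'' to ``maps are close in $\norm{\cdot}_2$''.

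For the ``only if'' direction: suppose $\Gamma$ is locally HS-stable and let $\chi$ be a character. Using amenability (a Følner sequence for $\Gamma$, pulled back along $\pi$), I would build an approximate homomorphism $\vphi_n\colon\Gamma\to\U(k_n)$ whose normalized traces converge pointwise to $\chi$ — this is the usual construction realizing a character of an amenable group as an asymptotic homomorphism into matrix algebras (via the GNS construction for $\chi$ and Følner truncations of the left regular-type representation). Local HS-stability then supplies partial homomorphisms $\psi_n$ with $\norm{\vphi_n(g)-\psi_n(g)}_2\to0$ for each $g$. Each $\psi_n$, being a partial homomorphism that is eventually multiplicative on every pair, factors through an $\bF$-marked group $\Lambda_n$ (the marked group whose relations are exactly those words on which $\psi_n\circ\pi$ is genuinely multiplicative and which map to the identity); one checks $\Lambda_n\to\Gamma$ in the space of marked groups, that $\theta_n\colon\Lambda_n\to\U(k_n)$ induced by $\psi_n$ is an honest representation, and that $\tau_{k_n}\circ\theta_n\to\chi$ pointwise because $\norm{\vphi_n(g)-\psi_n(g)}_2\to0$ forces $\tau_{k_n}(\psi_n(g))-\tau_{k_n}(\vphi_n(g))\to0$.

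For the ``if'' direction: let $\vphi_n\colon\Gamma\to\U(k_n)$ be an approximate homomorphism. After passing to a subsequence, $\tau_{k_n}\circ\vphi_n$ converges pointwise to some character $\chi$ of $\Gamma$ (countability of $\Gamma$ plus a diagonal/ultrafilter argument; positivity and normalization pass to the limit). Apply the hypothesis to $\chi$ to get $\bF$-marked groups $\Lambda_n\to\Gamma$ and representations $\theta_n\colon\Lambda_n\to\U(k_n')$ with $\tau_{k_n'}\circ\theta_n\to\chi$. Now one must \emph{merge} the two sequences of representations: both $\vphi_n$ and $\theta_n$ (viewed on $\bF$, hence on $\Gamma$ in the eventual sense) have normalized traces converging to the same $\chi$, so on any finite set of group elements and any $\eps>0$, for $n$ large their GNS-type data are $\eps$-close; a standard amenable matching/perturbation argument (matching multiplicities of approximate eigenvalue blocks, using that matrix algebras of possibly different sizes can be compared up to small $\norm{\cdot}_2$-error after a further perturbation of the $k_n$'s, or passing to a common amplification) produces $\psi_n\colon\Gamma\to\U(k_n)$ that is eventually a homomorphism on each pair (inherited from $\theta_n$ via $\Lambda_n\to\Gamma$) and satisfies $\norm{\vphi_n(g)-\psi_n(g)}_2\to0$.

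The main obstacle is precisely this last merging step: reconciling the two matrix sizes and the two families of representations into a single sequence of partial homomorphisms $\psi_n\colon\Gamma\to\U(k_n)$ \emph{on the same $\U(k_n)$} and close to $\vphi_n$. In the classical HS-stability proof this is handled by the fact that equality of characters on an amenable group gives approximate unitary equivalence of the associated representations (after amplification), and one must check that the local/marked-group bookkeeping does not disrupt this — in particular that the ``eventually multiplicative on each pair'' condition is preserved under the perturbation, and that one can absorb any discrepancy in dimensions. I would isolate this as a lemma: \emph{if $\theta_n\colon\Lambda_n\to\U(k_n')$ with $\Lambda_n\to\Gamma$ and $\tau\circ\theta_n\to\chi$, and $\vphi_n\colon\Gamma\to\U(k_n)$ is an approximate homomorphism with $\tau\circ\vphi_n\to\chi$, then after adjusting dimensions there exist partial homomorphisms $\psi_n$ with $\norm{\vphi_n-\psi_n}_2\to0$} — whose proof is the amenable approximate-equivalence argument of \cite{HS2} run in the marked-group setting.
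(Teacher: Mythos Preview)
Your proposal is correct and takes essentially the same approach as the paper. The paper streamlines it by working in the ultrafilter formulation throughout (avoiding the subsequence bookkeeping you flag), invoking Connes' Theorem to realize each character as $\tau_\omega\circ\vphi$ for some $\vphi:\Gamma\to\prod_\omega^{met}\U(n)$, and for the converse applying exactly the Hadwin uniqueness result you isolate (equal traces on an amenable group give unitary conjugacy in the ultraproduct, so $u\psi u^*$ is a partial homomorphism lifting $\vphi$ to $\prod_\omega^{alg}\U(k_n)$) together with a dimension-adjustment lemma that forces $k_n'=k_n$ directly.
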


In the above theorem, $\tau_{k_n}$ denotes the normalized trace on the finite-dimensional unitary group $\U(k_n)$. Moreover, we point out that there is a useful equivalent formulation of the condition in the above theorem in terms of partial homomorphisms to unitary groups, see Lemma~\ref{rem:equivalent:criterion}. In either formulation, we note that, contrary to the criterion for HS-stability, only ``local'' representations of $\Gamma$ show up, and it is perfectly possible that $\Gamma$ itself admits no nontrivial finite-dimensional representations. Indeed, it is an easy observation (Proposition~\ref{prop:directedunion}) that local stability is a local property, i.e. it is preserved under directed unions. Using this we obtain the following examples:

\begin{mex}[Corollary~\ref{cor:locallyvirtuallynilpotent}]
    Every locally finite group is locally HS-stable. In particular, the groups $S_\infty$ and $A_\infty$ of finitely supported (even) permutations of $\N$ are locally HS-stable. More generally, all locally virtually nilpotent groups are locally HS-stable.
\end{mex}

We note that both $S_\infty$ and $A_\infty$ are not MAP, and are therefore not HS-stable (Example \ref{ex:unstable:locallystable}).

Next, we establish several basic properties and results about local HS-stability for amenable groups, analogous to known results in the setting of classical HS-stability. For this we will use, among other things, our character criterion from Theorem~\ref{intro:thm:char} for (1) below, a lifting property for amenable von Neumann algebras from \cite{IS-exotic} for (2), and diagonal products in the space of marked groups together with properties of nuclear $C^*$-algebras for (3). We refer to the corresponding results below, and to Sections~\ref{sec:stability} and \ref{sec:amenable} for definitions. Notably, we formulate (very) flexible versions of local HS-stability in Definition~\ref{def:flexibly}, by analogy with the classical notions.

\begin{mthm}
\label{intro:thm:amenable}

Let $\Gamma$ be a countable amenable group.
\begin{enumerate}
    \item \emph{(Proposition \ref{prop:flexible})} $\Gamma$ is locally HS-stable if and only if it is flexibly locally HS-stable.
    \item \emph{(Proposition \ref{prop:product:amenable})} If $\Sigma$ is any countable discrete group, and $\Gamma$ and $\Sigma$ are ((very) flexibly) locally HS-stable, then so is $\Gamma \times \Sigma$.
    \item \emph{(Theorem \ref{thm:veryflex})} $\Gamma$ is very flexibly locally HS-stable if and only if it is LEF.
    \item \emph{(Proposition \ref{prop:weaklylocallyHSstable})} $\Gamma$ is weakly locally HS-stable if and only if it is LEF.
\end{enumerate}
\end{mthm}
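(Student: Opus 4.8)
The plan is to dispatch the four parts in order, the main inputs being the character criterion (Theorem~\ref{thm:char}), its partial‑homomorphism reformulation (Lemma~\ref{rem:equivalent:criterion}), the LEF characterization implicit in Lemma~\ref{lem:stable:LEF}, and standard facts about amenable von Neumann algebras. For \emph{part (1)} one implication is formal, since flexible local HS‑stability is a weakening of local HS‑stability. For the converse, assume $\Gamma$ is amenable and flexibly locally HS‑stable; I verify the condition in Theorem~\ref{thm:char}. Fix a character $\chi$ of $\Gamma$. Amenability makes the GNS von Neumann algebra of $\chi$ hyperfinite, so there is a sequence of approximate homomorphisms $\vphi_n:\Gamma\to\U(k_n)$ with $\tau_{k_n}\circ\vphi_n\to\chi$ pointwise. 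Flexible local HS‑stability gives eventually‑genuine maps $\psi_n:\Gamma\to\U(k_n')$ with $k_n/k_n'\to 1$, close to $\vphi_n$ in the flexible sense; since a factor tending to $1$ is invisible to the normalized trace, the $\psi_n$ produce $\bF$‑marked groups $\Lambda_n$ with finite‑dimensional representations $\theta_n$ and $\tau_{k_n'}\circ\theta_n\to\chi$. The one delicate point is that $\Lambda_n$ must converge to $\Gamma$ and not to a proper quotient; for this I would first run the argument with $\chi$ replaced by $(1-t)\chi+t\,\delta_e$ (still a character), whose $\delta_e$‑summand forces the $\psi_n$ to be asymptotically faithful and hence $\Lambda_n\to\Gamma$, and then diagonalize as $t\to 0$. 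This is the ``only if'' direction of Theorem~\ref{thm:char} carried out in the flexible setting; the ``if'' direction then gives that $\Gamma$ is locally HS‑stable.

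\emph{Part (2).} I would follow the template of the analogous statement for classical HS‑stability. Given an approximate homomorphism $\vphi_n:\Gamma\times\Sigma\to\U(k_n)$, passing to a tracial ultraproduct $\mathcal{M}=\prod_\omega(M_{k_n},\tau_{k_n})$ turns $g\mapsto(\vphi_n(g))_\omega$ into a genuine unitary representation $\pi$ of $\Gamma\times\Sigma$. Local HS‑stability of $\Sigma$, applied to $\vphi_n|_\Sigma$, yields eventually‑genuine partial homomorphisms $\psi_n^\Sigma:\Sigma\to\U(k_n)$ with $\norm{\vphi_n(s)-\psi_n^\Sigma(s)}_2\to 0$. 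The crux is to localize the $\Gamma$‑part \emph{compatibly}: because $\Gamma$ is amenable, $\pi(\Gamma)''$ is hyperfinite, and the lifting property for amenable von Neumann algebras from \cite{IS-exotic} lets the already‑localized $\Sigma$‑side and the $\Gamma$‑side --- which commute exactly in $\mathcal{M}$ and hence approximately along the sequence --- be lifted simultaneously to maps $\psi_n^\Gamma$ that are eventually genuine on finite sets, close to $\vphi_n|_\Gamma$, and commute exactly with $\psi_n^\Sigma$ there. Then $(g,s)\mapsto\psi_n^\Gamma(g)\psi_n^\Sigma(s)$ is the desired partial homomorphism of $\Gamma\times\Sigma$, and the flexible and very flexible variants are identical with the correspondingly flexible ultraproduct. \textbf{This simultaneous lifting is the step I expect to be the main obstacle}: meeting the $\Gamma$‑relations and the $\Sigma$‑relations at once along the sequence is precisely what fails without amenability of a factor, and is where \cite{IS-exotic} is essential.

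\emph{Parts (3) and (4).} In each case ``stable $\Rightarrow$ LEF'' is the easy direction: an amenable group is hyperlinear, so $\Gamma$ admits asymptotically faithful approximate homomorphisms; feeding such a sequence into very flexible, respectively weak, local HS‑stability produces eventually‑genuine, asymptotically faithful partial homomorphisms into finite‑dimensional unitary groups, and rounding the finitely many unitaries involved at each stage to elements of a finite subgroup --- which the flexibility makes room for, just as in Lemma~\ref{lem:stable:LEF} --- gives maps to finite groups witnessing LEF. For ``LEF $\Rightarrow$ stable'': the finite groups $G_n$ witnessing LEF, post‑composed with their left‑regular representations, give finite‑dimensional representations $\theta_n:\Lambda_n\to\U(k_n)$ of $\bF$‑marked groups $\Lambda_n\to\Gamma$ with $\tau_{k_n}\circ\theta_n\to\delta_e$; since in the very flexible (resp.\ weak) setting the dimensions of the approximating sequence are essentially unconstrained, an arbitrary approximate homomorphism can be absorbed into a sufficiently large direct sum of pieces of the $\theta_n$. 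Producing a single sequence $\Lambda_n\to\Gamma$ that works uniformly against all approximate homomorphisms, and checking convergence in the space of marked groups, is handled by diagonal products there together with nuclearity of the $C^*$‑algebras $C^*(\Lambda_n)$, as in the proof of Theorem~\ref{thm:char}. Finally, (4) reduces to (3): weak local HS‑stability sits between very flexible local HS‑stability and the mere existence of the eventually‑genuine asymptotically faithful partial homomorphisms used above, both of which have already been shown equivalent to LEF.
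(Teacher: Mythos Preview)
Your sketches for parts (1) and (2) are close enough to the paper's arguments. For (1), the detour through $(1-t)\chi+t\,\delta_e$ is unnecessary: by Lemma~\ref{rem:equivalent:criterion} the local character criterion is equivalent to the existence of a partial homomorphism $\psi_n:\Gamma\to\U(k_n)$ with $\tau_{k_n}\circ\psi_n\to\chi$, and the marked groups $\Lambda_n=\bF_S/(\ker\pi\cap\ker\psi_n)$ automatically converge to $\Gamma$ (Remark~\ref{rem:convergence:quotient}), never to a proper quotient. For (2), the paper applies the \cite{IS-exotic} separation \emph{first} to obtain commuting subalgebras $P_n,Q_n\subset\bM_{k_n}$ with $\vphi(\Gamma)''\subset\prod_\omega P_n$ and $\vphi(\Sigma)''\subset\prod_\omega Q_n$, and only then invokes local HS-stability of $\Gamma$ and $\Sigma$ with targets restricted to $\U(P_n)$ and $\U(Q_n)$ respectively; your ordering (stabilize $\Sigma$ first, then try to make $\Gamma$ commute with it) creates exactly the compatibility problem you flag, which the paper simply avoids.

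Parts (3) and (4), however, have genuine gaps. For (3), the ``absorption into a sufficiently large direct sum of pieces of the $\theta_n$'' idea does not produce a partial homomorphism that is \emph{close} to the given approximate homomorphism $\vphi_n$ in the sense of Definition~\ref{def:close}: direct sums of the LEF-representations have traces converging to $\delta_e$, whereas $\vphi_n$ may induce an arbitrary character, so no amount of direct-summing makes the corner $P_{k_n}\psi_n P_{k_n}$ approximate $\vphi_n$. The paper's proof is substantially different: it uses the lifting property of the nuclear $C^*$-algebra $C^*(\Gamma)$ to get ucp lifts $\theta_n$ of $\vphi$, Stinespring-dilates these to genuine (infinite-dimensional) representations $\pi_n$, then pulls back along the diagonal products $\Lambda_\ell=\otimes_{m\ge\ell}F_m$ --- which are amenable and residually finite, hence have RFD $C^*$-algebras by \cite{BL:RFD} --- and finally applies Hadwin's lifting theorem \cite{Had:Lift} to approximate $\pi_n$ by finite-dimensional representations of $\Lambda_\ell$. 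None of Stinespring, RFD, or Hadwin's result appears in your sketch.

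For (4), your reduction to (3) fails because the implication ``very flexibly locally HS-stable $\Rightarrow$ weakly locally HS-stable'' is not formal: the former allows arbitrary dimension increase, while the latter demands the partial homomorphism live in the \emph{same} $\U(k_n)$ as the given hyperlinear approximation. The paper proves (4) by a completely separate argument: a hyperlinear approximation always induces the regular character $\delta_e$; Lemma~\ref{lem:dims2} manufactures from LEF a partial homomorphism in the \emph{same} dimensions that is also a hyperlinear approximation; and then the uniqueness-up-to-conjugacy of hyperlinear approximations of amenable groups (Corollary~\ref{cor:hyperlinear:conjugate}, a consequence of Proposition~\ref{thm:hadwin}) shows any two such are close after a unitary twist.
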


Next, we will apply the character criterion from Theorem~\ref{intro:thm:char} to establish local HS-stability of groups of great interest in geometric and combinatorial group theory:

\begin{mthm}[Theorem~\ref{thm:full}]
\label{intro:thm:full}

Let $T$ be a minimal Cantor subshift. Then the derived subgroup $[[T]]'$ of the topological full group of $T$ is locally HS-stable. In particular, there are uncountably many non-isomorphic finitely generated groups which are locally HS-stable, but not HS-stable.
\end{mthm}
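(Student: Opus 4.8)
The plan is to apply the character criterion of Theorem~\ref{intro:thm:char} (via its partial-homomorphism reformulation, Lemma~\ref{rem:equivalent:criterion}) to the group $G = [[T]]'$, so the first task is to check its hypotheses. The group $[[T]]'$ is a classical example of an amenable group (by Juschenko--Monod for $\Z$-subshifts, and Juschenko--Nekrashevych--de~la~Salle in general), it is finitely generated, and it is LEF since topological full groups of minimal Cantor systems are LEF by \cite{GM:TopFullGp} and passing to the finitely generated derived subgroup preserves this. Fixing an epimorphism $\pi : \bF \to G$ from a free group, I would then need to show: for every character $\chi : G \to \C$, there is a sequence $(\Lambda_n)_n$ of $\bF$-marked groups converging to $G$ together with finite-dimensional representations $\theta_n : \Lambda_n \to \U(k_n)$ whose normalized traces converge pointwise to $\chi$.

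The key structural input is a description of the characters of $[[T]]'$. Here I would invoke the fact --- following work on characters of topological full groups (e.g. the results building on the commutator-subgroup simplicity of Matui, and the trace/character classification for these groups) --- that $[[T]]'$ has \emph{few characters}: since $[[T]]'$ is an infinite simple group, every character $\chi$ is either the trivial character (identically $1$ only on the identity, i.e. the regular character $\delta_e$) or the constant character $1$ corresponding to the trivial representation; more generally, by simplicity, any character is an extreme point supported appropriately, and the only finite-dimensional representation is the trivial one while the only other extreme character is $\delta_e$. So it suffices to realize these two characters: the constant character $\chi \equiv 1$ is realized by taking $\Lambda_n = G$ and $\theta_n$ the trivial one-dimensional representation; the regular character $\delta_e$ must be realized through genuinely \emph{local} finite-dimensional representations, and this is where the LEF structure and the flexibility of marked-group convergence are essential.

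To realize $\delta_e$, I would use the LEF approximation of $[[T]]'$: there is a sequence of finite groups $F_n$ with marked quotients $q_n : \bF \to F_n$ such that $F_n \to G$ in the space of $\bF$-marked groups (equivalently, the kernels are eventually trivial on any fixed finite subset of $\bF$ and eventually the relations of $G$ appear). Taking $\Lambda_n = F_n$ and letting $\theta_n$ be the left-regular representation of $F_n$ (of dimension $k_n = |F_n|$), the normalized trace is $\tau_{k_n}(\theta_n(\gamma)) = \mathbbm{1}[q_n(\gamma) = e]$, which for each fixed $\gamma \in \bF$ converges to $\mathbbm{1}[\pi(\gamma) = e] = \delta_e(\pi(\gamma))$ precisely because $F_n \to G$. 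This handles $\delta_e$. General characters, being mixtures/limits of extreme ones, are then handled by combining these building blocks (direct sums of the trivial and regular representations with appropriate multiplicities, over the diagonal product of the relevant marked groups). The ``uncountably many non-isomorphic'' clause follows from the known fact (Matui, Grigorchuk--Medynets, etc.) that there are uncountably many minimal subshifts $T$ with pairwise non-isomorphic $[[T]]'$, all finitely generated, and none of them is HS-stable since $[[T]]'$ is an infinite simple, hence non-MAP, group (cf. Example~\ref{ex:unstable:locallystable}).

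The main obstacle I anticipate is establishing the character classification of $[[T]]'$ cleanly enough to reduce to the two cases above --- in particular, ruling out ``intermediate'' characters that would not obviously be approximable. If such a clean dichotomy is unavailable in the literature in exactly the needed form, the fallback is to argue directly at the level of partial homomorphisms: given any approximate homomorphism $\vphi_n : G \to \U(k_n)$ (equivalently, a character on $G$ arising as a limit of $\tau_{k_n} \circ (\text{something})$), one uses that $G$ is simple together with the structure of finite-dimensional unitary representations of $G$-approximants --- any such representation, when restricted to a large enough finitely generated subgroup and pushed through the LEF quotients, must have its normalized trace close to a convex combination of $\delta_e$ and $1$, and both of these are realized as above. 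Either way, the role of Theorem~\ref{intro:thm:char} is to convert the (hard-looking) stability statement into the (tractable) approximation-of-characters statement, and the LEF property of $[[T]]'$ supplies exactly the local finite-dimensional representations needed.
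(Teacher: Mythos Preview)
Your proposal has a genuine gap: the claim that the only extremal characters of $G=[[T]]'$ are the constant character $1$ and the regular character $\delta_e$ is false. Simplicity of $G$ does \emph{not} imply character rigidity. In fact, for every $T$-invariant Borel probability measure $\mu$ on $X$ the fixed-point function $g \mapsto \mu(\Fix(g))$ is an extremal character of $G$ which is neither $\delta_e$ nor $1$ (and there may be many such $\mu$, as well as further characters). The paper explicitly notes this in the remark following Theorem~\ref{thm:full}: the amenable topological full groups ``admit a much richer character theory'' than the character-rigid (non-amenable) full groups treated in \cite{fullgroups:characters}. Your fallback sketch does not repair this: you still need to approximate every character of $G$, and there is no reason a general fixed-point character should be close to a convex combination of $1$ and $\delta_e$.

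The paper's route is different and bypasses a direct character classification. It uses the announced result of Dudko--Medynets \cite{DM} that \emph{every} extremal character of $[[T]]'$ is the fixed-point character of some p.m.p.\ action. This feeds into Corollary~\ref{cor:PtoHS} (Proposition~\ref{intro:prop:PtoHS}): for an amenable group all of whose extremal characters arise this way, local P-stability implies local HS-stability. Since $[[T]]'$ is locally P-stable by Bradford \cite{bradford}, local HS-stability follows. The bridge lemma (Lemma~\ref{lem:IRStoChar}) is the key point: weak$^*$ approximation of an IRS by atomic IRS's supported on finite-index subgroups translates into pointwise approximation of the associated fixed-point character by traces of finite-dimensional representations. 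Your LEF/regular-representation argument does recover the special case $\chi=\delta_e$ (this is essentially Lemma~\ref{lem:regularcharacter}), but that alone is far from enough here.
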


Note that the analogous result for permutations was established by Bradford \cite[Theorem 1.8]{bradford}, relying on Zheng's classification of IRS's of such groups \cite{zheng}. The way we obtain Theorem \ref{intro:thm:full} is via a recently announced result of Dudko--Medynets \cite{DM}, stating that all extremal characters of derived subgroups of topological full groups come from p.m.p. actions, together with the following general fact, which is elementary but useful and of independent interest.

\begin{mprop}[Corollary \ref{cor:PtoHS}]
\label{intro:prop:PtoHS}

Let $\Gamma$ be an amenable group such that every extremal character can be realized as the fixed-point character of a p.m.p. action. If $\Gamma$ is (locally) P-stable, then it is (locally) HS-stable.
\end{mprop}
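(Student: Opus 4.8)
The plan is to verify, for \emph{every} character $\chi$ of $\Gamma$, the criterion of Theorem~\ref{thm:char} (in the local case) and the Hadwin--Shulman character criterion \cite[Theorem~4]{HS2} (in the classical case), by using the hypothesis to realise $\chi$ through an invariant random subgroup and then feeding in (local) P-stability via permutation representations.

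First I would show that every character of $\Gamma$ arises from an invariant random subgroup. Consider the map $\Phi$ sending an invariant random subgroup $\nu$ of $\Gamma$ (a conjugation-invariant Borel probability measure on $\Sub(\Gamma)$) to the function $g \mapsto \nu(\{H \leq \Gamma : g \in H\})$. A direct computation shows that $\Phi(\nu)$ is a character, that $\Phi$ is affine, and that $\Phi$ is continuous from the weak-$*$ topology on invariant random subgroups to pointwise convergence of characters (each $\{H : g \in H\}$ is clopen in $\Sub(\Gamma)$). Since the space of invariant random subgroups is weak-$*$ compact and convex, $\operatorname{Im}(\Phi)$ is a compact convex subset of the compact convex space of characters of $\Gamma$. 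Now if $\Gamma \curvearrowright (X,\mu)$ is p.m.p.\ then $\Phi(\Stab_*\mu)(g) = \mu(\Fix(g))$, so every fixed-point character lies in $\operatorname{Im}(\Phi)$; by hypothesis this includes every extremal character, hence by the Krein--Milman theorem $\operatorname{Im}(\Phi)$ is the whole character space. Thus any given character $\chi$ equals $\Phi(\nu)$ for some invariant random subgroup $\nu$.

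Next I would invoke (local) P-stability. For amenable groups, P-stability is equivalent to every invariant random subgroup being a weak-$*$ limit of invariant random subgroups supported on finite-index subgroups \cite{BLT:IRS}, and local P-stability to the analogous statement in which the approximating invariant random subgroups are those of $\bF$-marked groups $\Lambda_n \to \Gamma$ (cf.\ \cite{bradford}). Applying the forward implication to our $\nu$, and passing to finitely supported approximants with rational weights, we obtain, in the local case, actions of $\bF$-marked groups $\Lambda_n \to \Gamma$ on finite sets $X_n$ (and in the classical case, actions of $\Gamma$ on finite sets $X_n$) whose stabilizer distributions $\nu_n$ satisfy $\Phi(\nu_n) \to \Phi(\nu) = \chi$ pointwise, by continuity of $\Phi$. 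Let $\theta_n : \Lambda_n \to \U(k_n)$ with $k_n = |X_n|$ be the permutation representation of the action on $X_n$. Then $\tau_{k_n}(\theta_n(g))$ is exactly the normalised number of fixed points of $g$ on $X_n$, i.e.\ $\tau_{k_n}\circ\theta_n = \Phi(\nu_n)$, and therefore $\tau_{k_n}\circ\theta_n \to \chi$ pointwise on $\bF$. In the local case this is precisely the data required by Theorem~\ref{thm:char} (equivalently, by Lemma~\ref{rem:equivalent:criterion}) for $\chi$; in the classical case the finite-dimensional representations $\theta_n$ of $\Gamma$ verify the Hadwin--Shulman criterion for $\chi$. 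As $\chi$ was arbitrary, $\Gamma$ is locally HS-stable, respectively HS-stable.

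The real content is concentrated in two inputs: the invariant-random-subgroup characterisation of (local) P-stability for amenable groups, which is what lets P-stability be applied to the action extracted in the second step and whose precise form in the marked-group setting one must pin down; and the compactness/continuity used in the first step to promote the hypothesis from extremal characters to all characters via Krein--Milman. Neither should present a genuine difficulty, so the main point of care is bookkeeping: one must ensure that the convergence produced by P-stability is recorded in terms of the same free group $\bF$ and the same marking featuring in Theorem~\ref{thm:char}, and that the passage from finite group actions to permutation representations carries the data $\Lambda_n \to \Gamma$ along unchanged.
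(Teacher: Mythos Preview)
Your proposal is correct and follows essentially the same route as the paper: both combine the IRS criterion for (local) P-stability with the (local) character criterion via the continuous affine map $\Phi$ from invariant random subgroups to characters, and both handle the passage from finite-index-supported IRS's to honest permutation representations by a rational-weight approximation (this is the paper's Lemma~\ref{lem:IRStoChar}). The only cosmetic difference is that the paper reduces to extremal characters first via Lemma~\ref{lem:extremalcharacters} and then applies the hypothesis directly, whereas you apply Krein--Milman to the image of $\Phi$ to realise every character by an IRS; these are equivalent packagings of the same reduction.
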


In recent work~\cite{enrichments}, Dogon, Levit, and Vigdorovich found a further application of the character criterion of Theorem~\ref{intro:thm:char} to alternating enrichments $\mathrm{Alt}_\infty(\Gamma) \rtimes \Gamma$ (also known as lampshufflers). In particular, they prove that $\Gamma$ is locally HS-stable if and only if $\mathrm{Alt}_\infty(\Gamma) \rtimes \Gamma$ is locally HS-stable \cite[Theorem 1.10]{enrichments}. This provides a different construction of uncountably many non-isomorphic finitely generated groups which are locally HS-stable but not HS-stable.

Finally, we establish the following negative result, showing that property (T) groups are essentially never locally HS- or P-stable. This result is also new for local P-stability, and answers Question~6.5 from \cite{bradford}, which is attributed to Lubotzky.

\begin{mthm}[Theorem \ref{thm:T}]
\label{intro:thm:T}

Let $\Gamma$ be an infinite group with property (T). If $\Gamma$ is sofic, then it is not locally P-stable. If $\Gamma$ is hyperlinear, then it is not locally HS-stable.
\end{mthm}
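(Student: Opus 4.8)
The plan is to adapt the argument of Becker and Lubotzky \cite{BL:T} for classical (flexible) stability to the local setting. I spell out the hyperlinear case; the sofic case is identical after replacing $\U(k_n)$, $\norm{\cdot}_2$ and the normalized trace $\tau_{k_n}$ by $\Sym(k_n)$, the normalized Hamming distance $\dHamm$, and the normalized fixed-point function $\sigma\mapsto k_n^{-1}\abs{\Fix(\sigma)}$, which enjoy the same formal properties. Since a group with property (T) is finitely generated, fix a finite Kazhdan pair $(S,\kappa)$ for $\Gamma$ and, as in Theorem~\ref{thm:char}, an epimorphism $\pi:\bF\to\Gamma$ where $\bF$ is free on $S$; write $R\sub\bF$ for $\ker\pi$. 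As $\Gamma$ is hyperlinear, it admits maps $\vphi_n:\Gamma\to\U(k_n)$ with $\norm{\vphi_n(gh)-\vphi_n(g)\vphi_n(h)}_2\to0$ for all $g,h$ and $\tau_{k_n}(\vphi_n(g))\to1$ if $g=e$ and $\to0$ otherwise. Moreover $(\vphi_n)_n$ can and should be chosen \emph{generically}, so that the induced homomorphism $\Gamma\to\prod_{n\to\omega}(\U(k_n),\norm{\cdot}_2)$ generates a copy of the finite von Neumann algebra $L(\Gamma)$ inside the tracial ultraproduct $\prod_{n\to\omega}(M_{k_n}(\C),\tau_{k_n})$ — possible since $L(\Gamma)$ is Connes-embeddable — because the Becker--Lubotzky argument requires the approximation to be far from every sequence of genuine finite-dimensional representations.

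Suppose for contradiction that $\Gamma$ is locally HS-stable, and apply the definition to $(\vphi_n)_n$ to obtain maps $\psi_n:\Gamma\to\U(k_n)$ as in (1)--(2). By (2), $\norm{\vphi_n(g)-\psi_n(g)}_2\to0$, hence $\tau_{k_n}(\psi_n(g))\to1$ for $g=e$ and $\to0$ otherwise, and $(\psi_n)_n$ induces the same embedding of $L(\Gamma)$ as $(\vphi_n)_n$. By (1) and a diagonal argument there is $r_n\to\infty$ such that $\psi_n$ restricts to a homomorphism on the ball $B_S(r_n)\sub\Gamma$ (so in particular $\psi_n(e)=1$). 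Let $\Lambda_n$ be the $\bF$-marked group defined by the relators of $R$ of length at most $r_n$. Then $\Lambda_n\to\Gamma$ in the space of marked groups, $\psi_n|_S$ extends to a genuine representation $\theta_n:\Lambda_n\to\U(k_n)$ agreeing with $\psi_n$ on $B_S(r_n)$, and, viewed as characters of $\bF$ through $\pi$, $\tau_{k_n}\circ\theta_n$ converges pointwise to the character $\chi_{\mathrm{reg}}$ of the left regular representation of $\Gamma$. Note $\Lambda_n$ is infinite for large $n$, since its ball of radius $r_n$ equals that of $\Gamma$.

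The crucial new point, absent from the classical proof, is that property (T) transfers from $\Gamma$ to the approximating groups $\Lambda_n$. By Shalom's theorem (every group with property (T) is a quotient of a finitely presented group with property (T)), $\Gamma$ is a quotient of $\Gamma_0=\langle S\mid R_0\rangle$ with property (T) for some finite $R_0\sub R$; since each element of $R_0$ lies in the normal closure of $R$, it is a consequence of relators of $\Gamma$ of bounded length, so $\Lambda_n$ surjects onto $\Gamma_0$ once $r_n$ is large. Because property (T), together with a Kazhdan constant relative to a fixed generating set, passes to quotients, there is $\kappa_0>0$ with $\Lambda_n$ having Kazhdan pair $(S,\kappa_0)$ for all large $n$. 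Now decompose $\theta_n=\theta_n^0\oplus\theta_n^1$ into its trivial isotypic component and its orthogonal complement. One first checks $\dim(\theta_n^0)/k_n\to0$: otherwise, along a subsequence with $\dim(\theta_n^0)/k_n\to\delta\in(0,1]$, the normalized characters of $\theta_n^1$ would converge on every finite subset of $\Gamma$ to the function equal to $1$ at $e$ and $-\delta/(1-\delta)$ elsewhere, whose restriction to a large finite set is not positive semidefinite, contradicting that pointwise limits of characters are characters. Hence $\theta_n^1$ carries almost all of the dimension, has no $\Lambda_n$-invariant vectors, and by property (T) of $\Lambda_n$ has no $(S,\kappa_0)$-almost-invariant unit vector; equivalently $(\theta_n)_n$ has a uniform spectral gap off a subspace of dimension $o(k_n)$.

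It remains to contradict this uniform spectral gap with the fact that $(\theta_n)_n$ is $\norm{\cdot}_2$-close to the generically chosen $(\vphi_n)_n$, which is done following \cite{BL:T}: roughly, a sequence of representations of property (T) groups with a uniform spectral gap is, in the tracial ultraproduct, too rigid to reproduce all of $L(\Gamma)$ together with the regular character unless the approximation it is close to already factors asymptotically through genuine finite-dimensional representations, contradicting the choice of $(\vphi_n)_n$. I expect the main obstacle to be precisely the adaptation of this last step: Becker and Lubotzky work with genuine representations of $\Gamma$, whereas here one only has genuine representations $\theta_n$ of the marked approximations $\Lambda_n$. The point is that each ingredient of their argument — the Kazhdan constant, the convergence of characters to $\chi_{\mathrm{reg}}$, and the genericity of $(\vphi_n)_n$ — involves only finitely much of $\Gamma$ at a time and so can be read off $\Lambda_n$; the single genuinely global input needed, a uniform property (T) along $(\Lambda_n)_n$, is exactly what Shalom's finite presentability theorem provides and what is unavailable in the classical proof.
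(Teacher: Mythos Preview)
Your proposal has the right global ingredient --- Shalom's theorem to gain a \emph{finitely presented} property (T) source --- but the final step is a genuine gap, and the route you outline is not the one the paper takes.

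First a small correction: the surjection goes the other way. Once $r_n$ is large enough that $R_0\subset R\cap B_{\bF}(r_n)$, we have $\langle\!\langle R_0\rangle\!\rangle\subseteq\ker(\bF\to\Lambda_n)$, so it is $\Gamma_0$ that surjects onto $\Lambda_n$, not conversely. This is harmless for you (property (T) passes to quotients, so $\Lambda_n$ still has (T) with a uniform constant), but your sentence as written does not prove what you need.

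The real problem is the last paragraph. A uniform spectral gap for $(\theta_n)_n$ is \emph{not} in tension with $\tau_{k_n}\circ\theta_n\to\chi_{\mathrm{reg}}$: the regular representation of any infinite property~(T) group already has spectral gap, and so does any sequence of finite-dimensional representations with no invariant vectors. Your claim that such a sequence is ``too rigid to reproduce all of $L(\Gamma)$'' has no content as stated, and it is not what Becker--Lubotzky do. Their argument is not about generic hyperlinear approximations or embeddings of $L(\Gamma)$ at all; it is the concrete dimension-decrement trick: from an \emph{irreducible} $\sigma:\Gamma\to\U(n)$, manufacture an approximate representation in $\U(n-1)$ close to $\sigma$ under the corner inclusion, correct by stability to a genuine representation $\rho:\Gamma\to\U(n-1)$, use (T) to produce a nonzero intertwiner $\C^{n-1}\to\C^n$, and contradict Schur. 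None of that is visible in your outline, and your ``generic $(\vphi_n)_n$ embedding $L(\Gamma)$'' plays no role in such an argument.

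The paper's proof keeps $\Gamma_0$ as the property~(T) engine throughout rather than transferring (T) to the $\Lambda_n$. Using that a hyperlinear, locally HS-stable $\Gamma$ is LEF, one builds (from left-regular representations of a sequence of finite quotients witnessing LEF) \emph{irreducible} representations $\sigma_n:\Gamma_0\to\U(k_n)$ with $k_n\to\infty$, together with a partial homomorphism $\rho_n:\Gamma\to\U(k_n)$ that agrees with $\sigma_n\circ f$ on balls of growing radius. One then applies the Becker--Lubotzky perturbation to $\sigma_n$ to obtain homomorphisms $\tau_n:\bF_S\to\U(k_n-1)$ which descend to an approximate homomorphism of $\Gamma$; local HS-stability of $\Gamma$ yields a nearby partial homomorphism $\vphi_n:\Gamma\to\U(k_n-1)$; finite presentability of $\Gamma_0$ turns $\vphi_n\circ f$ into a genuine representation of $\Gamma_0$ for large $n$; property~(T) of $\Gamma_0$ upgrades the almost-intertwiner $\iota_n:\C^{k_n-1}\hookrightarrow\C^{k_n}$ to a true intertwiner $\theta_n$ between $\vphi_n\circ f$ and $\sigma_n$; and Schur's lemma (irreducibility of $\sigma_n$ plus the dimension mismatch) forces $\theta_n=0$, contradicting $\norm{\iota_n-\theta_n}_2\to0$. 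The step you flagged as the ``main obstacle'' is thus bypassed entirely by working with $\Gamma_0$ rather than with the $\Lambda_n$.
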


Besides introducing local HS-stability and proving the aforementioned results, our second main goal is to provide a general framework for \emph{local stability}, which streamlines many of the arguments and allows us to establish several general results for local stability with respect to any family of metric groups. This is achieved by developing and unifying various different points of view: local embeddings, partial homomorphisms, homomorphisms to different types of ultraproducts, and limits in the space of marked groups. We introduce the general definitions and properties in Sections~\ref{sec:prelim} and \ref{sec:stability}, before focusing on local HS-stability for the remainder of the paper. In particular, we include most of the details regarding the aforementioned equivalent viewpoints to make the paper mostly self-contained.  We hope that this general framework will facilitate future work on local stability with respect to other families of metric groups.

\subsection*{Acknowledgements}

The authors wish to thank Tianyi Zheng and Henry Bradford for several helpful conversations about the contents of the paper, and Tianyi Zheng for also pointing out some useful references. The authors also wish to thank Alon Dogon, Adrian Ioana, Srivatsav Kunnawalkam Elayavalli, Arie Levit and Tatiana Shulman for useful comments on an earlier draft of the paper, and Artem Dudko and Konstantin Medynets for sharing their result from \cite{DM}. Additionally, we would like to thank Kyoto University for hosting two of the authors for the 8th KTGU Mathematics Workshop for Young Researchers, where this project was started. Finally, the authors wish to thank the anonymous referee for providing useful comments that allowed us to improve the exposition and correct a few mistakes.

\subsection*{Outline of the paper} 

Besides the introduction, there are 5 other sections in this paper. In Section~\ref{sec:prelim} we review some preliminaries, introduce several necessary definitions, and prove some auxiliary lemmas in a general framework. In Section~\ref{sec:stability} we introduce the relevant notions of (local) stability and make some first general observations. In Section~\ref{sec:amenable} we study amenable groups and prove Theorems~\ref{intro:thm:char} and \ref{intro:thm:amenable}. In Section~\ref{sec:examples} we prove Proposition~\ref{intro:prop:PtoHS} and use this together with our character criterion to establish Theorem~\ref{intro:thm:full}. Finally in Section~\ref{sec:T} we prove Theorem~\ref{intro:thm:T}.

\section{Preliminaries}\label{sec:prelim}

All abstract groups (i.e. those not coming with a metric structure) will be countable by default. Throughout, $\Gamma$ will denote a countable discrete group with identity element $e$. We denote by $\bM_n$ the algebra of complex $n\times n$ matrices, and by $\norm{\cdot}_2$ the normalized Hilbert--Schmidt norm on $\bM_n$, i.e. the norm induced by the normalized trace $\tau_n$. We will denote by $\U(n)$ the subgroup of $\bM_n$ consisting of unitary matrices, and by $\mathbbm{1}$ the identity matrix. Unless specified otherwise, the word ``representation'' means ``finite-dimensional unitary representation''. By convention, the set of natural numbers $\N$ starts at $1$.

\subsection{Sequences of metric groups}

We start with the object involved in most of the upcoming definitions:

\begin{definition}
\label{def:metric}

A \emph{family of metric groups} $\G$ is a family $\{ (G, \dG) \}$ where each $G$ is a group and $\dG$ is a bi-invariant metric on $G$. A countable family of metric groups indexed by $\N$, will be referred to as a \emph{sequence of metric groups} $\G = \{ (G_n, d_n) : n \in \N \}$.
\end{definition}

This definition is sufficient to define stability and local stability in the following section. In order to define the flexible notions, we will need the following additional structure:

\begin{definition}
\label{def:directed}

A \emph{directed sequence of metric groups} consists of the following data:
\begin{itemize}
    \item A sequence of metric groups $\G = \{(G_n,\dn) : n \in \N\}$;
    \item Compatible injective homomorphisms $\iota_{n, m} : G_n \to G_m$ for all $m \geq n$, such that $\iota_{n, n} = \id_{G_n}$;
    \item A function
    \[\Dn : G_n \times \bigsqcup_{m \geq n} G_m \to \R_{\geq 0}\]
    such that $\Dn$ restricts to the usual distance function $\dn$ on $G_n \times G_n$, and for all $\ell \geq m \geq n$ and all $g \in G_n, h \in G_m$:
    \[\Dn(g, \iota_{m, \ell}(h)) = \Dn(g, h).\]
\end{itemize}
\end{definition}

Note that the injective homomorphisms need not be isometric. The following will be the main example throughout this paper.

\begin{example}
\label{ex:HS}

Let $\cU \coloneqq \{ (\U(n), \norm{\cdot}_2) : n \in \N \}$, i.e. the unitary groups $\U(n)$ endowed with the distance induced by the normalized Hilbert--Schmidt norm $(M, N) \mapsto \norm{M - N}_2$. There are injective homomorphisms $\iota_{n, m} : \U(n) \to \U(m)$ for all $m \geq n$, given by inclusion in the upper-left corner. The distance function on $\U(n)$ extends as follows: for all $m \geq n$,
\[(M, N) \in \U(n) \times \U(m) \quad \mapsto \quad \norm{M - P_n N P_n}_2.\]
Here $P_n : \C^m \to \C^n$ is the projection onto the first $n$ coordinates. This makes $\cU$ into a directed sequence of metric groups.
\end{example}

We will also encounter the following example:

\begin{example}
\label{ex:symmetric}

Let $\cS \coloneqq \{ (\Sym(n), \dHamm) : n \in \N \}$ be the finite symmetric groups $\Sym(n)$ endowed with the normalized Hamming distance:
\[ \dHamm(\sigma, \tau) = \frac{1}{n} \#\{ i \in \{1, \ldots, n\} : \sigma(i) \neq \tau(i) \}. \]
There are injective homomorphisms $\iota_{n, m} : \Sym(n) \to \Sym(m)$ for all $m \geq n$, given by declaring all points in $\{ n+1, \ldots, m \}$ as fixed points of elements in $\iota_{n, m} (\Sym(n))$. The distance function on $\Sym(n)$ extends as follows: for all $m \geq n$,
\[
(\sigma, \tau) \in \Sym(n) \times \Sym(m) \quad \mapsto \quad \frac{1}{n} \# \{i \in \{1, \ldots, n\} : \sigma(i) \neq \tau(i) \}.
\]
This makes $\cS$ into a sequence of metric groups.
\end{example}

\begin{remark}
In their initial definition of flexible stability, Becker and Lubotzky \cite[Section 4.4]{BL:T} include an additive correction of $\frac{m-n}{n}$ in the distance formula above. When we formalize flexible stability, we will add the requirement that $m \sim n$ asymptotically, which makes this additive correction superfluous.
\end{remark}

The two families $\cU$ and $\cS$ are related, since one can view any permutation as its corresponding (unitary) permutation matrix. For future reference, we include the following easy fact.

\begin{lemma}
\label{lem:StoU}

Let $\Sigma_n \coloneqq \Sym(n) \to \U(n)$ be the homomorphism defined by the standard permutation representation. Then, for all $\sigma, \tau \in \Sym(n)$ it holds that
\[\| \Sigma_n(\sigma) - \Sigma_n(\tau) \|_2 = \sqrt{2 \dHamm(\sigma, \tau)}.\]
\end{lemma}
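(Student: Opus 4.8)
The plan is to expand the squared norm and use that the trace of a permutation matrix counts fixed points. First I would note that the permutation matrix $\Sigma_n(\sigma)$ is unitary, so $\Sigma_n(\sigma)^* \Sigma_n(\sigma) = \mathbbm{1}$ and hence $\|\Sigma_n(\sigma)\|_2^2 = \tau_n(\mathbbm{1}) = 1$, and likewise $\|\Sigma_n(\tau)\|_2^2 = 1$. Expanding,
\[
\|\Sigma_n(\sigma) - \Sigma_n(\tau)\|_2^2 = \|\Sigma_n(\sigma)\|_2^2 + \|\Sigma_n(\tau)\|_2^2 - 2\,\mathrm{Re}\,\tau_n\big(\Sigma_n(\sigma)^*\Sigma_n(\tau)\big) = 2 - 2\,\mathrm{Re}\,\tau_n\big(\Sigma_n(\sigma^{-1}\tau)\big),
\]
using that $\Sigma_n$ is a homomorphism and $\Sigma_n(\sigma)^* = \Sigma_n(\sigma^{-1})$.

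Next I would compute $\tau_n(\Sigma_n(\rho))$ for an arbitrary $\rho \in \Sym(n)$: the diagonal entry of $\Sigma_n(\rho)$ in position $(i,i)$ is $1$ if $\rho(i) = i$ and $0$ otherwise, so $\Tr(\Sigma_n(\rho)) = \#\Fix(\rho)$ and $\tau_n(\Sigma_n(\rho)) = \tfrac1n \#\Fix(\rho)$, which is in particular real. Applying this with $\rho = \sigma^{-1}\tau$ and observing that $\sigma^{-1}\tau(i) = i \iff \tau(i) = \sigma(i)$, we get $\tau_n(\Sigma_n(\sigma^{-1}\tau)) = \tfrac1n \#\{i : \sigma(i) = \tau(i)\} = 1 - \dHamm(\sigma,\tau)$.

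Substituting back yields $\|\Sigma_n(\sigma) - \Sigma_n(\tau)\|_2^2 = 2 - 2(1 - \dHamm(\sigma,\tau)) = 2\,\dHamm(\sigma,\tau)$, and taking square roots gives the claim. (Alternatively, one can argue entrywise: by unitary invariance of $\|\cdot\|_2$ the quantity equals $\|\Sigma_n(\sigma^{-1}\tau) - \mathbbm{1}\|_2$, and each column indexed by a non-fixed point of $\sigma^{-1}\tau$ contributes exactly $1^2 + (-1)^2 = 2$ to $n\|\cdot\|_2^2$ while fixed-point columns contribute $0$.) There is no real obstacle here — the only thing to be careful about is the normalization of the trace, so that the factor $\tfrac1n$ in $\|\cdot\|_2^2$ exactly cancels the $n$ in the count of non-fixed points, producing the factor $2$ under the square root.
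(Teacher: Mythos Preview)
Your proof is correct and is exactly the standard computation one would expect. The paper itself does not give a proof of this lemma; it is stated as an ``easy fact'' and left to the reader, so there is nothing to compare against.
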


\subsection{Approximate homomorphisms}

\begin{definition}\label{def:partialapprox}
    Let $\cC$ be a class of groups. A sequence of maps $\vphi_n:\Gamma \to G_n \in \cC$ is called
    \begin{enumerate}[label = (\roman*)]
        \item a \emph{partial homomorphism} if for all $g,h\in\Gamma$, there exists $N\in\N$ such that for all $n\geq N$:
        \[
        \vphi_n(gh) = \vphi_n(g)\vphi_n(h).
        \]
    \end{enumerate}
    Assume now that $\G = \{(G_n,\dn) : n \in \N\}$ is a sequence of metric groups. Then $\vphi_n:\Gamma \to G_{k_n} \in \G$ is called
    \begin{enumerate}[label = (\roman*)]\setcounter{enumi}{1}
        \item an \emph{approximate homomorphism} (also sometimes \emph{almost homomorphism} or \emph{asymptotic homomorphism}) if for all $g,h\in\Gamma$:
        \[
        \dkn(\vphi_n(gh), \vphi_n(g)\vphi_n(h))\to 0.
        \]
        \item \emph{separating} if for all $e\neq g\in\Gamma$:
        \[
        \liminf\limits_{n \to \infty} \dkn(\vphi_n(g), 1_{G_{k_n}}) > 0.
        \]
    \end{enumerate}
    We will also use the shorthand notation $\vphi : \Gamma \to \G$ for the sequence of maps $\vphi_n$.
\end{definition}

Note that a partial homomorphism into a sequence of metric groups is in particular an approximate homomorphism. The question of whether every approximate homomorphism is a small perturbation of a partial homomorphism, is exactly the problem of local stability which we will introduce in Section~\ref{sec:stability}.

\begin{definition}
    Let $\G$ be a sequence of metric groups, and let $\Gamma$ be a group. We say that $\Gamma$ is \emph{$\G$-approximable} if there exists a separating approximate homomorphism $\vphi : \Gamma \to \G$.
\end{definition}

\begin{example}
\label{ex:sofic:hyperlinear}

    $\cU$-approximable groups are the so-called \emph{hyperlinear groups} \cite{radulescu}. They are also sometimes called \emph{Connes-embeddable groups}, since these are exactly the groups whose group von Neumann algebra embeds in an ultrapower of the hyperfinite II$_1$ factor. $\cS$-approximable groups are the so-called \emph{sofic groups} \cite{gromov, weiss}. It is known that all sofic groups are hyperlinear \cite{ES:amplification} (see also Lemma~\ref{lem:StoU}), and all amenable groups are sofic \cite{weiss}.
\end{example}

\begin{remark}
\label{rmk:amplification}

In the cases of $\cU$ and $\cS$, by taking appropriate tensor powers, one can show that approximability is equivalent to requiring the separating homomorphism to be ``maximally'' separating, i.e. where $\liminf_{n\to\infty} \dkn(\vphi_n(g), 1_{G_{k_n}})$ equals $\sqrt{2}$ and $1$ for $\cU$ and $\cS$ respectively, see \cite{ES:amplification}. Note however that in some other contexts, determining the equivalence between two such notions of approximability can be considerably harder \cite{linearsofic, linearsofic:gap}.
\end{remark}

Another ``approximation property'' with respect to a class of groups that will be relevant for our purposes is the following:

\begin{definition}
\label{def:LEF}
    Let $\mathcal{C}$ be a class of groups. A group $\Gamma$ is \emph{locally embeddable into $\mathcal{C}$} if for every finite subset $B \subset \Gamma$ there exist a group $C \in \mathcal{C}$ and a map $\vphi : \Gamma \to C$ such that $\vphi|_B$ is injective and $\vphi(gh) = \vphi(g)\vphi(h)$ for all $g, h \in B$.

    A group $\Gamma$ is called \emph{LEF} if it is locally embeddable in the class of finite groups.
\end{definition}

\begin{remark}\label{rk:partial:localemb}
    With the terminology introduced above, we note that a group $\Gamma$ is locally embeddable into a class of groups $\cC$ if and only if there exists a partial homomorphism $\vphi_n:\Gamma\to G_n\in\cC$ that is \emph{injective}, i.e. for all $g,h\in\Gamma$ there exists $N\in\N$ such that $\vphi_n(g)\neq\vphi_n(h)$ for $n\geq N$ (or equivalently, for all $e\ne g\in\Gamma$ we have $\vphi_n(g)\ne e$ for $n$ large enough).
\end{remark}

Note that a group is LEF if and only if it is locally embeddable into the class of finite symmetric groups, as every finite group admits an exact embedding into a finite symmetric group. Furthermore, LEF groups admit the following nice characterization in terms of separating partial homomorphisms with values in symmetric or unitary groups:

\begin{lemma}
\label{lem:LEF:hyperlinear}
    Let $\Gamma$ be a group. The following are equivalent:
    \begin{enumerate}
        \item $\Gamma$ is LEF;
        \item There exists a separating partial homomorphism $\vphi : \Gamma \to \cS$;
        \item There exists a separating partial homomorphism $\vphi : \Gamma \to \cU$;
        \item $\Gamma$ is locally embeddable into $\cU$.
    \end{enumerate}
\end{lemma}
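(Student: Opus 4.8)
The plan is to prove Lemma~\ref{lem:LEF:hyperlinear} by establishing the cycle of implications $(1)\Rightarrow(2)\Rightarrow(3)\Rightarrow(4)\Rightarrow(1)$, using Remark~\ref{rk:partial:localemb} as the main translation device between local embeddings and injective partial homomorphisms, and Lemma~\ref{lem:StoU} to pass between $\cS$ and $\cU$.

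\textbf{From LEF to a separating partial homomorphism into $\cS$.} Suppose $\Gamma$ is LEF. Since every finite group embeds into a finite symmetric group, $\Gamma$ is locally embeddable into the class of finite symmetric groups. Fix an exhaustion $B_1 \subseteq B_2 \subseteq \cdots$ of $\Gamma$ by finite symmetric subsets with $\bigcup_n B_n = \Gamma$, and for each $n$ choose (by Definition~\ref{def:LEF}) a finite symmetric group $\Sym(k_n)$ and a map $\vphi_n : \Gamma \to \Sym(k_n)$ that is injective on $B_n$ and multiplicative on $B_n$. For a fixed pair $g, h \in \Gamma$, both lie in $B_N$ for some $N$, hence in $B_n$ for all $n \geq N$, so $\vphi_n(gh) = \vphi_n(g)\vphi_n(h)$ eventually; thus $\vphi$ is a partial homomorphism. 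Similarly, for $e \neq g \in \Gamma$, taking $N$ with $g \in B_N$ and using that $\vphi_n|_{B_n}$ is injective and multiplicative gives $\vphi_n(g) \neq \vphi_n(e) = 1_{\Sym(k_n)}$ for all $n \geq N$; one checks that multiplicativity on $B_n$ forces $\vphi_n(e)$ to be idempotent, hence the identity. Now $\dHamm(\vphi_n(g), 1) \geq 2/k_n$ is not quite enough for $\liminf > 0$, so here I would instead invoke the standard amplification trick (cf.\ Remark~\ref{rmk:amplification}): replace each $\vphi_n$ by its composition with a diagonal/tensor embedding into a larger symmetric group (acting on a suitable power of the original set) so that any nontrivial permutation moves a definite proportion of points. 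Concretely, using $\Sym(k_n) \hookrightarrow \Sym(k_n^{j})$ acting diagonally on $j$-tuples, a permutation with even one non-fixed point has Hamming distance $\geq 1 - (1 - 1/k_n)^{j} \to 1$ as $j \to \infty$; choosing $j = j(n) \to \infty$ slowly enough that injectivity and multiplicativity are preserved on $B_n$ yields a genuinely separating partial homomorphism into $\cS$.

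\textbf{From $\cS$ to $\cU$ and back to local embeddability.} Given a separating partial homomorphism $\vphi : \Gamma \to \cS$, compose with the permutation representations $\Sigma_{k_n} : \Sym(k_n) \to \U(k_n)$ of Lemma~\ref{lem:StoU}. Since each $\Sigma_{k_n}$ is an injective homomorphism, the composition is again a partial homomorphism, and by Lemma~\ref{lem:StoU} we have $\|\Sigma_{k_n}(\vphi_n(g)) - \mathbbm{1}\|_2 = \sqrt{2\,\dHamm(\vphi_n(g), 1)}$, so separation is preserved; this gives $(2)\Rightarrow(3)$. For $(3)\Rightarrow(4)$: a separating partial homomorphism $\psi : \Gamma \to \cU$ is in particular a partial homomorphism, and separation means that for each $e \neq g$ we have $\psi_n(g) \neq \mathbbm{1}$ for all large $n$ (if $\psi_n(g) = \mathbbm{1}$ infinitely often the $\liminf$ would be $0$); by Remark~\ref{rk:partial:localemb} this is exactly local embeddability into $\cU$. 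Finally $(4)\Rightarrow(1)$: if $\Gamma$ is locally embeddable into $\cU$, then by Remark~\ref{rk:partial:localemb} there is an injective partial homomorphism $\psi_n : \Gamma \to \U(k_n)$. Fixing a finite $B \subseteq \Gamma$, choose $N$ large enough that $\psi_n$ is injective and multiplicative on the finite set $B$ for all $n \geq N$ (possible since there are finitely many pairs to control); then $\psi_N(\Gamma)$, or rather the subgroup it generates, need not be finite, so instead I would argue: the image $\psi_N(B)$ together with the multiplication table inherited from $B$ embeds into the finite group $\psi_N(\langle B \rangle)$ only if that is finite, which it may not be. The cleaner route is to first establish $(4)\Rightarrow(2)$ or directly $(4)\Rightarrow(1)$ by noting that $\U(k)$ is \emph{not} finite, so one cannot conclude LEF directly; instead one uses that finite subsets of $\U(k)$ with approximate multiplicative structure can be perturbed, or more simply one closes the cycle as $(4)\Rightarrow(3')\Rightarrow(1)$ via soficity-type arguments.

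\textbf{Anticipated main obstacle.} The genuinely delicate point is the implication $(4)\Rightarrow(1)$ (or equivalently $(3)\Rightarrow(1)$): going from a partial homomorphism into unitary groups back to one into \emph{finite} groups. Unlike $\Sym(k)$, the group $\U(k)$ is infinite, so a partial homomorphism valued in $\U(k_n)$ does not immediately give maps to finite groups. I expect the right argument is to note that a separating partial homomorphism into $\cU$ that is moreover injective on larger and larger finite sets gives, for each finite $B$, a finite subset $\psi_n(B) \subseteq \U(k_n)$ on which the multiplication table of $B$ is realized exactly; since $\U(k_n)$ is a compact group and $B$ is finite, one can use that the subgroup generated by $\psi_n(B)$ — while possibly infinite — contains a genuine copy of the partial multiplication table, and then appeal to the fact (Mal'cev / linearity) that finitely generated linear groups are residually finite to replace it by a finite quotient on which $B$ still embeds. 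This is exactly where finite-dimensionality of the unitary representations is essential, and it is the step I would write out most carefully; everything else is bookkeeping with exhaustions and the two translation lemmas already available.
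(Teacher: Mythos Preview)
Your cycle of implications and your identification of Mal'cev's theorem as the key to $(4)\Rightarrow(1)$ match the paper exactly; that final paragraph is the right argument and is precisely what the paper does (take $G=\langle \psi_N(B)\rangle\leq\U(k_N)$, finitely generated linear hence residually finite, and pass to a finite quotient separating $\psi_N(B)$).

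Two comments on $(1)\Rightarrow(2)$, where your route differs from the paper's. First, a small error: the diagonal embedding $\Sym(k_n)\hookrightarrow\Sym(k_n^{j})$ is a group homomorphism, so multiplicativity and injectivity on $B_n$ are preserved automatically for \emph{any} $j$; there is no ``slowly enough'' constraint. What you actually need is $j=j(n)$ growing \emph{fast enough} relative to $k_n$ (e.g.\ $j(n)\geq k_n$) so that $1-(1-1/k_n)^{j(n)}$ has positive $\liminf$; if $j(n)=o(k_n)$ the bound still tends to $0$. Second, the paper bypasses amplification entirely: instead of locally embedding $\Gamma$ into symmetric groups and then amplifying, it takes the LEF maps $\vphi_n:\Gamma\to F_n$ into finite groups and composes with the \emph{left regular} action $F_n\curvearrowright F_n$. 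Since left multiplication is free, every nontrivial element of $F_n$ has Hamming distance exactly $1$ from the identity, so separation is immediate with no amplification needed. Your approach works once the growth rate is fixed, but the paper's is a one-line shortcut worth knowing.
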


\begin{proof}
$(1) \Rightarrow (2).$ Suppose that $\Gamma$ is LEF. Write $\Gamma$ as an increasing union of finite sets $B_n$, and let $\vphi_n : \Gamma \to F_n$ be as in Definition \ref{def:LEF} with respect to the finite set $B_n$, where each $F_n$ is a finite group of order $k_n$. We assume without loss of generality that $e \in B_n$ for every $n$. Compose $\vphi_n$ with the action of $F_n$ on itself by left multiplication. This yields a map $\phi_n : \Gamma \to \Sym(k_n)$, and these maps form a separating partial homomorphism.

$(2) \Rightarrow (3).$ It suffices to compose with the map $\Sigma_n : \Sym(n) \to \U(n)$ and use Lemma \ref{lem:StoU}.

$(3) \Rightarrow (4)$. This is clear from the definitions (cf. Remark~\ref{rk:partial:localemb}).

$(4) \Rightarrow (1).$ Suppose that $\Gamma$ locally embeds into $\cU$. Fix a finite set $B\subset \Gamma$. By assumption there exists $n\in \N$ and $\vphi:\Gamma\to \U(n)$ such that $\vphi(gh) = \vphi(g)\vphi(h)$ for all $g, h \in B$, and $\vphi|_B$ is injective. Let $G \leq \U(n)$ be the subgroup generated by $\vphi(B)$. By Mal'cev's Theorem \cite{malcev}, being a finitely generated linear group, $G$ is residually finite. Therefore, there exists a finite quotient $F$ of $G$ into which $\vphi(B)$ is mapped injectively.
We can map all the elements not in $G$ to $1_F$ and compose the resulting map with $\vphi$. This yields a map $\psi : \Gamma \to F$ witnessing the condition in Definition~\ref{def:LEF} for $B$. 
Hence $\Gamma$ is LEF.
\end{proof}

\begin{example}
    Examples of LEF groups include residually finite groups, and more generally MAP groups, topological full groups of minimal Cantor systems (see Section~\ref{sec:examples}), and they are stable under some natural group-theoretic constructions such as taking wreath products \cite{LEF}.

    Furthermore, we note that LEF is mainly a useful concept for non-finitely presented groups, since finitely presented LEF groups are automatically residually finite. This follows for instance from Lemma~\ref{lem:fp:partial}. In particular, non-examples of LEF groups include finitely presented but non-residually finite groups, such as the Baumslag--Solitar group $BS(2,3)$, the Higman group, or Thompson's groups $F, T$ and $V$. For an amenable such group, see e.g. \cite{baumslag:solvable}.
\end{example}

\subsection{Marked groups}

For the study of local stability, the language of marked groups will be useful, and we briefly review it here.

\begin{definition}
\label{def:marked}

    Let $\bF = \bF_S$ be a free group freely generated by a (countable) set $S$. An \emph{$\bF$-marked (or $S$-marked) group}, is a group $\Gamma$ together with a surjective homomorphism $\bF \to \Gamma$, called a \emph{marking}. Two $\bF$-marked groups are \emph{isomorphic} if there is an isomorphisms that intertwines the markings. Equivalently, an $\bF$-marked group is a group $\Gamma$ with $\abs{S}$ labeled elements that generate $\Gamma$.

    We denote by $\mathcal{G}_{\bF}$ the set of isomorphism classes of $\bF$-marked groups; when $\bF$ has rank $r$, we will simply write $\mathcal{G}_r$. Taking the kernel of a marking identifies $\mathcal{G}_{\bF}$ with the set of all normal subgroups of $\bF$, which in turn is endowed with the subspace topology of $2^{\bF}$. The \emph{space of $\bF$-marked groups} is the set $\mathcal{G}_{\bF}$ endowed with this topology.
\end{definition}

The space of marked groups was introduced by Grigorchuk in \cite{grigorchuk:marked}; we refer the reader to \cite{limit, cellular} for more modern introductions.
Since $\bF$ is countable, $2^{\bF}$ is homeomorphic to the Cantor space. $\mathcal{G}_{\bF}$ is identified with a closed subspace, therefore it is compact, Hausdorff and metrizable, and thus its topology is completely determined by convergence of sequences. Explicitly, a sequence of markings $\pi_n : \bF \to \Gamma_n$ converges to a marking $\pi : \bF \to \Gamma$ if and only if for every word $w\in\bF$:
\begin{align}\label{eq:markedconvergence}
\begin{split}
    &w \in \ker(\pi) \iff \exists N\in\N:\forall n\geq N: w \in \ker(\pi_n), \text{ and}\\
    &w \notin \ker(\pi) \iff \exists N\in\N:\forall n\geq N: w \notin \ker(\pi_n).
\end{split}
\end{align}
Furthermore, in this case, we have
\begin{equation}\label{eq:markedconvergence2}
 \ker(\pi)=\liminf \limits_n\ker(\pi_n):=\bigcup\limits_{k=1}^{\infty}\bigcap\limits_{i=k}^{\infty} \ker(\pi_n).
\end{equation}
If \eqref{eq:markedconvergence} holds, we say that $\Gamma$ is the \emph{marked limit} of the $\Gamma_n$. If $S$ is finite, then, denoting by $B_\bF(r)$ the ball of radius $r$ in the Cayley graph of $\bF$ with respect to $S$, convergence of $\pi_n$ to $\pi$ is equivalent to the existence of $r_n\to\infty$ such that
\[
B_\bF(r_n)\cap \ker(\pi_n) = B_\bF(r_n)\cap\ker(\pi).
\]

The notions of local embeddings and marked limits are connected via the following theorem:

\begin{theorem}[{\cite[Theorems 7.1.16, 7.1.19]{cellular}}]
\label{thm:LE:limit}
    Let $\bF \to \Gamma$ be a marked limit of $\bF \to \Gamma_n$. Then $\Gamma$ is locally embeddable into the sequence $\{\Gamma_n\}_{n \in \N}$. More precisely, there exist an increasing sequence of finite sets $S \cup \{ e \} \subset B_n \subset \Gamma$ exhausting $\Gamma$ and maps $\vphi_n : \Gamma \to \Gamma_n$ preserving the markings such that $\vphi_n(gh) = \vphi_n(g)\vphi_n(h)$ for all $g, h \in B_n$, and $\vphi_n|_{B_n}$ is injective.

    Conversely, if $\Gamma$ is locally embeddable into a class $\mathcal{C}$ which is closed under taking subgroups, then $\Gamma$ is a marked limit of groups belonging to $\mathcal{C}$. More precisely, if $(B_n)_n$ is an exhaustion of $\Gamma$ by finite sets, and $\vphi_n : \Gamma \to C_n \in \mathcal{C}$ is multiplicative and injective on $B_n$, then $\pi : \bF \to \Gamma$ is the limit of $(\pi_n : \bF \to \langle \vphi_n(S) \rangle)_n$, where $\pi_n$ is the unique homomorphism that coincides with $\vphi_n$ on S.
\end{theorem}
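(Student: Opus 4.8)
The plan is to prove both implications by directly translating between convergence in $\mathcal{G}_{\bF}$, as characterized by \eqref{eq:markedconvergence}, and the combinatorics of partial homomorphisms, using throughout that whether a map is multiplicative and injective on a given finite set is a condition involving only finitely many elements of $\bF$. The non-uniform statements (``$\Gamma$ is locally embeddable into $\{\Gamma_n\}$'', resp. ``$\Gamma$ is a marked limit of groups in $\cC$'') will be immediate consequences of the ``more precisely'' versions, so I focus on the latter.

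\textbf{Forward direction.} Assume $\pi_n\to\pi$. I would fix a set-theoretic section $\sigma:\Gamma\to\bF$ of $\pi$ with $\sigma(e)=e$ and $\sigma(\pi(s))=s$ for $s\in S$, and define $\vphi_n:=\pi_n\circ\sigma:\Gamma\to\Gamma_n$; this automatically preserves the markings, i.e. $\vphi_n(\pi(s))=\pi_n(s)$ for $s\in S$. For $g,h\in\Gamma$ the word $w_{g,h}:=\sigma(gh)^{-1}\sigma(g)\sigma(h)$ lies in $\ker(\pi)$, and for $g\neq h$ the word $v_{g,h}:=\sigma(g)^{-1}\sigma(h)$ lies outside $\ker(\pi)$. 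By \eqref{eq:markedconvergence}, for every finite $B\subseteq\Gamma$ there is $N_B\in\N$ such that for all $n\geq N_B$ and all $g,h\in B$, the words $w_{g,h}$ and $v_{g,h}$ have the same kernel-membership under $\pi_n$ as under $\pi$; unwinding, this says exactly that $\vphi_n$ is multiplicative and injective on $B$. Now enumerate $\Gamma=\{\gamma_1,\gamma_2,\dots\}$ and set $B_n:=\{\gamma_1,\dots,\gamma_{r(n)}\}$, where $r(n):=\max\{r:n\geq N_{\{\gamma_1,\dots,\gamma_r\}}\}$ (and $r(n):=0$ if this set is empty). Then $(B_n)_n$ is increasing, exhausts $\Gamma$, and $\vphi_n$ is multiplicative and injective on $B_n$ by construction; after discarding the finitely many initial terms with $S\cup\{e\}\not\subseteq B_n$ we may also assume $S\cup\{e\}\subseteq B_n$ for all $n$.

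\textbf{Converse direction.} Given an exhaustion $(B_n)_n$ of $\Gamma$ by finite sets and maps $\vphi_n:\Gamma\to C_n\in\cC$ multiplicative and injective on $B_n$, I would first enlarge each $B_n$ so that $e\in B_n$ and each $\pi(s)$, $s\in S$, eventually lies in $B_n$ — harmless, since local embeddability supplies such a map for \emph{every} finite subset. Define $\pi_n:\bF\to\langle\vphi_n(\pi(S))\rangle=:\Lambda_n$ to be the unique homomorphism with $\pi_n(s)=\vphi_n(\pi(s))$; since $\cC$ is closed under subgroups, $\Lambda_n\in\cC$, and $\pi_n$ is onto $\Lambda_n$. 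The crucial step is an induction on the length of a reduced word $w=s_{i_1}^{\epsilon_1}\cdots s_{i_L}^{\epsilon_L}$, with partial products $p_j:=\pi(s_{i_1})^{\epsilon_1}\cdots\pi(s_{i_j})^{\epsilon_j}\in\Gamma$ (so $p_0=e$, $p_L=\pi(w)$), showing that $\pi_n(w)=\vphi_n(\pi(w))$ as soon as $n$ is large enough that $B_n$ contains $e$, all of the $\pi(s_{i_j})$, all the $p_j$, and the inverses of all of these. Here one first records that $\vphi_n(e)=\vphi_n(e)^2$ forces $\vphi_n(e)=1$, hence $\vphi_n(\pi(s)^{-1})=\vphi_n(\pi(s))^{-1}$, and then propagates multiplicativity of $\vphi_n$ on $B_n$ along the partial products, treating the two signs separately. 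Given this: if $w\in\ker(\pi)$ then $\pi_n(w)=\vphi_n(e)=1$ for all large $n$; if $w\notin\ker(\pi)$ then, since $\vphi_n$ is injective on $B_n\ni\pi(w),e$, we get $\pi_n(w)\neq 1$ for all large $n$. By \eqref{eq:markedconvergence} this is precisely the assertion $\pi_n\to\pi$ in $\mathcal{G}_{\bF}$.

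\textbf{Main obstacle.} The content is entirely in making the quantifiers uniform, so the step I would be most careful about is the bookkeeping: verifying that at each stage $n$ only finitely many auxiliary elements of $\bF$ (the words $w_{g,h},v_{g,h}$ in the forward direction) or of $\Gamma$ (the partial products $p_j$ and their inverses in the converse) need to be ``visible'', and that these can be absorbed into the data at stage $n$ while keeping the exhaustions increasing; and, in the converse, that the induction on word length genuinely reduces computing $\pi_n(w)$ to values of the \emph{merely partial} homomorphism $\vphi_n$ on those finitely many partial products. One further point: when $S$ is infinite one cannot require $S\subseteq B_n$, so there one only asks that each marked generator eventually lie in $B_n$ (equivalently, that $\vphi_n(\pi(s))=\pi_n(s)$ for all large $n$), which is what is needed for the markings to be preserved in the limit; with this reading the arguments above go through unchanged.
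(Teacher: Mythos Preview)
The paper does not give its own proof of this theorem; it is quoted from \cite[Theorems 7.1.16, 7.1.19]{cellular} without argument. Your proof is correct and is essentially the standard one found in that reference: in the forward direction, push a set-theoretic section of $\pi$ down along $\pi_n$ and use that the finitely many kernel-membership conditions $w_{g,h}\in\ker(\pi)$, $v_{g,h}\notin\ker(\pi)$ are eventually matched by $\pi_n$; in the converse, lift $\vphi_n$ to an honest homomorphism of $\bF$ and verify \eqref{eq:markedconvergence} word by word via the partial-product induction.

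One small correction in the converse: you should not ``enlarge each $B_n$''. The statement you are proving concerns a \emph{given} exhaustion $(B_n)_n$ and maps $\vphi_n$, and enlarging $B_n$ to $B_n'$ would not guarantee that $\vphi_n$ is still multiplicative and injective on $B_n'$. Fortunately the enlargement is unnecessary: since $(B_n)_n$ already exhausts $\Gamma$, for each fixed word $w$ the finitely many auxiliary elements $e$, $p_0,\dots,p_L$, $\pi(s_{i_j})^{\pm 1}$ all lie in $B_n$ once $n$ is large enough, which is exactly what your induction uses. With this adjustment the argument goes through verbatim. Your remark about infinite $S$ is also well taken; the inclusion $S\cup\{e\}\subset B_n$ in the statement tacitly presumes $S$ finite.
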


\begin{example}
\label{ex:infinitelypresented}

Let $\Gamma$ be a finitely generated group with a presentation $\langle S \mid R \rangle$ where $S$ is finite and $R = \{ r_1, r_2, \ldots \}$ is infinite. Let $\Gamma_n = \langle S \mid r_1, \ldots, r_n \rangle$. Then the sequence of canonical markings $\bF_S \to \Gamma_n$ converges to $\bF_S \to \Gamma$.
\end{example}

While the notion of LEF group is standard and well-studied, for our purposes linear groups will sometimes be more suitable. In this respect, the following corollary of Lemma~\ref{lem:LEF:hyperlinear} and Theorem~\ref{thm:LE:limit} is worth noting:

\begin{lemma}
\label{lem:marked:LEF}
    Let $\bF \to \Gamma$ be a marked group. The following are equivalent:
    \begin{enumerate}
        \item $\Gamma$ is LEF;
        \item $\Gamma$ is locally embeddable into the class of finite-dimensional unitary groups;
        \item $\bF \to \Gamma$ is a marked limit of finite groups;
        \item $\bF \to \Gamma$ is a marked limit of subgroups of finite-dimensional unitary groups.
    \end{enumerate}
\end{lemma}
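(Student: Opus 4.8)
The plan is to deduce all four equivalences from Lemma~\ref{lem:LEF:hyperlinear} and Theorem~\ref{thm:LE:limit}, organizing the statement into the three equivalences $(1)\Leftrightarrow(2)$, $(1)\Leftrightarrow(3)$ and $(2)\Leftrightarrow(4)$, which together cover all of $(1)$--$(4)$. The equivalence $(1)\Leftrightarrow(2)$ requires no new work: it is exactly the equivalence $(1)\Leftrightarrow(4)$ of Lemma~\ref{lem:LEF:hyperlinear}, since being ``locally embeddable into the class of finite-dimensional unitary groups'' is, by definition, the same as being locally embeddable into $\cU$.

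For the implications $(1)\Rightarrow(3)$ and $(2)\Rightarrow(4)$ I would invoke the converse part of Theorem~\ref{thm:LE:limit}: if $\Gamma$ is locally embeddable into a class $\mathcal{C}$ closed under taking subgroups, then the marking $\bF\to\Gamma$ is a marked limit of groups of $\mathcal{C}$ (and, by the ``More precisely'' clause, with respect to the given marking). For $(1)\Rightarrow(3)$ take $\mathcal{C}$ to be the class of finite groups, which is closed under subgroups; as $\Gamma$ is LEF, it is locally embeddable into $\mathcal{C}$, so $\bF\to\Gamma$ is a marked limit of finite groups. For $(2)\Rightarrow(4)$ take instead $\mathcal{C}$ to be the class of \emph{all subgroups} of finite-dimensional unitary groups; this class is closed under subgroups, and $(2)$ says precisely that $\Gamma$ is locally embeddable into it, so Theorem~\ref{thm:LE:limit} exhibits $\bF\to\Gamma$ as a marked limit of such subgroups.

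For the reverse implications $(3)\Rightarrow(1)$ and $(4)\Rightarrow(2)$ I would use the direct part of Theorem~\ref{thm:LE:limit}: a marked limit of $\bF\to\Gamma_n$ is locally embeddable into the sequence $\{\Gamma_n\}_{n\in\N}$. If $(3)$ holds the $\Gamma_n$ are finite, so $\Gamma$ is locally embeddable into finite groups, i.e.\ LEF, giving $(1)$. If $(4)$ holds, each $\Gamma_n$ is a subgroup of some $\U(k_n)$; composing a local embedding of a finite subset of $\Gamma$ into $\Gamma_n$ with the inclusion $\Gamma_n\hookrightarrow\U(k_n)$ shows that $\Gamma$ is locally embeddable into $\cU$, which is $(2)$. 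Chaining the three equivalences then closes the loop.

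I do not expect a genuine obstacle here; the one point that needs care is the distinction, in the marked-limit formulations $(3)$ and $(4)$, between ``finite-dimensional unitary groups'' and ``subgroups of finite-dimensional unitary groups''. Theorem~\ref{thm:LE:limit} only produces as approximating groups the subgroups generated by the images of the marking generators, and these need not be of the form $\U(m)$; moreover the class of the $\U(m)$'s is not closed under subgroups, so the converse direction of Theorem~\ref{thm:LE:limit} cannot be applied to it directly. This is exactly why $(4)$ is phrased in terms of subgroups, and it is the subtlety one must be explicit about. A secondary, routine point is that the marked limits in $(3)$ and $(4)$ are automatically taken with respect to the given marking $\pi$, once one fixes an exhaustion of $\Gamma$ by finite sets containing the images of an exhausting sequence of finite subsets of $S$, as in the ``More precisely'' statements of Theorem~\ref{thm:LE:limit}.
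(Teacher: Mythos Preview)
Your proposal is correct and follows exactly the approach the paper indicates: the paper simply states that the lemma is a corollary of Lemma~\ref{lem:LEF:hyperlinear} and Theorem~\ref{thm:LE:limit}, and you have spelled out precisely how those two results combine to give all four equivalences. Your remark about why $(4)$ must be phrased in terms of \emph{subgroups} of unitary groups (so that the class is closed under subgroups, as required by Theorem~\ref{thm:LE:limit}) is a useful clarification that the paper leaves implicit.
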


\begin{remark}
\label{rem:convergence:quotient}
    For a marking $\pi\colon\mathbb{F}_S\to\Gamma$ we have the following. Let $\vphi_n\colon\Gamma\to G_n$ be a partial homomorphism, and denote by $\psi_n\colon \mathbb{F}_S\to G_n$ the corresponding homomorphisms defined by $\psi_n(s)=\vphi_n(\pi(s))$ for $s\in S$. By construction
    $\ker\pi \subseteq \liminf\limits_n \ker \psi_n.$
    Viewing $\psi_n$ as a homomorphism from $\Lambda_n\coloneqq \mathbb{F}_S/(\ker\pi\cap \ker\psi_n)$, we see that $\Gamma$ is the marked limit of $\pi_n\colon\mathbb{F}_S\to\Lambda_n$. Moreover, if $\langle\vphi_n(\pi(S))\rangle=G_n$ then the markings $\psi_n\colon \mathbb{F}_S\to G_n$ converge to $\mathbb{F}_S/\liminf\limits_n \ker \psi_n$, which is a quotient of $\Gamma$. 

    Conversely, let $\psi_n\colon\Lambda_n\to G_n$ be a sequence of homomorphisms, where $\pi_n\colon\mathbb{F}_S\to\Lambda_n$ are markings converging to $\Gamma$. Fix a map $q\colon \Gamma\to \mathbb{F}_S$ such that $\pi\circ q=\id_\Gamma$, and define $\varphi_n\coloneqq\psi_n\circ\pi_n\circ q\colon\Gamma\to G_n$.
    For any $g,h\in\mathbb{F}_S$ we have $q(g)q(h)=\alpha(g,h)q(gh)$ where $\alpha(g,h)\in\ker\pi$, hence $\vphi_n(g)\vphi_n(h)=\psi_n(\pi_n(\alpha(g,h)))\vphi_n(gh)$ and it follows that $\vphi_n$ is a partial homomorphism.  

    Note also that for any $w\in\mathbb{F}_S$, in both parts, we have $\psi_n(\pi_n(w))=\vphi_n(\pi(w))$ for $n$ large enough. 
\end{remark}

\subsection{Almost descending homomorphism}

Marked groups offer an alternative approach to the notion of approximate homomorphisms. For the remainder of this subsection, let $\pi : \bF \to \Gamma$ be a fixed marking.

\begin{definition}
\label{def:descending}
    Let $\G$ be a sequence of metric groups. A sequence of homomorphisms $f_n : \bF \to G_{k_n} \in \G$, denoted $f : \bF \to \G$, is said to be
    \begin{enumerate}[label = (\roman*)]
        \item an \emph{approximate homomorphism for $\Gamma$}, or \emph{descend to an approximate homomorphism of $\Gamma$}, if for all $w \in \ker(\pi)$:
        \[
        \dGkn(f_n(w), 1_{G_{k_n}})\to 0.
        \]
        \item a \emph{partial homomorphism for $\Gamma$}, or \emph{descend to a partial homomorphism of $\Gamma$}, if for all $w \in \ker(\pi)$ there exists $N \in \N$ such that for all $n \geq N$:
        \[ 
        f_n(w) = 1_{G_{k_n}}.
        \]
    \end{enumerate}
\end{definition}

The notions of approximate and partial homomorphisms on the level of $\Gamma$ and $\bF$ are related via the following, which justifies the terminology of \emph{descending} in the above definition.

\begin{lemma}
\label{lem:lift:descend}

    Let $\bF$ be a free group on the set $S$, and let $\pi:\bF\to \Gamma$ be a marking. Let $\G$ be a sequence of metric groups.
    \begin{enumerate}
        \item Let $\vphi : \Gamma \to \G$ be an approximate (respectively, partial) homomorphism, and for each $n$ let $f_n : \bF \to G_{k_n}$ be the homomorphism defined on the basis by $f_n(s) \coloneqq \vphi_n(\pi(s))$. Then $f : \bF \to \G$ is an approximate (respectively, partial) homomorphism for $\Gamma$.

        \item Let $f : \bF \to \G$ be an approximate (respectively, partial) homomorphism for $\Gamma$. Let $q: \Gamma \to \bF$ be a map such that $\pi q = \id_{\Gamma}$, and define $\vphi_n(g) \coloneqq f_n(q(g))$. Then $\vphi : \Gamma \to \G$ is an approximate (respectively, partial) homomorphism.
    \end{enumerate}
\end{lemma}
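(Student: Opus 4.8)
The plan is to use two structural facts: a map out of the free group $\bF$ that is prescribed on a basis is automatically a homomorphism, so $f_n$ is genuinely multiplicative and only its behaviour on $\ker(\pi)$ is in question; and the metrics $\dGkn$ are bi-invariant, so we may freely "cancel" a factor on either side. Before starting I would record the following elementary consequences of $\vphi$ being an approximate homomorphism into a sequence of metric groups, each proved by the obvious one-line argument using bi-invariance and the triangle inequality: (a) $\dGkn(\vphi_n(e), 1_{G_{k_n}}) \to 0$ (from $\vphi_n(e)^2 \approx \vphi_n(e)$); (b) $\dGkn(\vphi_n(g^{-1}), \vphi_n(g)^{-1}) \to 0$ for each fixed $g$ (from (a) and $\vphi_n(g)\vphi_n(g^{-1}) \approx \vphi_n(e)$); and (c) for any fixed tuple $g_1, \dots, g_m \in \Gamma$, $\dGkn\big(\vphi_n(g_1 \cdots g_m),\ \vphi_n(g_1)\cdots \vphi_n(g_m)\big) \to 0$, by induction on $m$ with at most $m-1$ error terms. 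The partial-homomorphism analogues of (a)--(c) are the same statements with "$\to 0$" replaced by "equal for all $n \geq N$", valid because the finitely many pairs of $\Gamma$-elements involved eventually all satisfy the multiplicativity relation.

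For part (1), fix $w \in \ker(\pi)$ and write it as a word $s_{i_1}^{\varepsilon_1}\cdots s_{i_\ell}^{\varepsilon_\ell}$ in the generators $S$. Since $f_n$ is a homomorphism, $f_n(w) = \vphi_n(\pi(s_{i_1}))^{\varepsilon_1}\cdots \vphi_n(\pi(s_{i_\ell}))^{\varepsilon_\ell}$. Using (b) on the negative occurrences and then (c) with $g_j = \pi(s_{i_j})^{\varepsilon_j}$, the right-hand side is within a total of $O(\ell)$ vanishing error terms of $\vphi_n\big(\pi(s_{i_1})^{\varepsilon_1}\cdots\pi(s_{i_\ell})^{\varepsilon_\ell}\big) = \vphi_n(\pi(w)) = \vphi_n(e)$, which by (a) is within a vanishing distance of $1_{G_{k_n}}$. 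As $\ell$ is a fixed number depending only on $w$, this gives $\dGkn(f_n(w), 1_{G_{k_n}}) \to 0$, so $f$ is an approximate homomorphism for $\Gamma$. In the partial case the identical chain of equalities holds exactly for all $n \geq N$ by the partial versions of (a)--(c), so $f_n(w) = 1_{G_{k_n}}$ eventually.

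For part (2), fix $g,h \in \Gamma$ and set $w \coloneqq q(g)\,q(h)\,q(gh)^{-1} \in \bF$. Since $\pi q = \id_\Gamma$ and $\pi$ is a homomorphism, $\pi(w) = gh\,(gh)^{-1} = e$, so $w \in \ker(\pi)$ and $q(g)q(h) = w\, q(gh)$. As $f_n$ is a homomorphism,
\[
\vphi_n(g)\vphi_n(h) = f_n(q(g))f_n(q(h)) = f_n(q(g)q(h)) = f_n(w)\, f_n(q(gh)) = f_n(w)\,\vphi_n(gh).
\]
By right-invariance of $\dGkn$, $\dGkn\big(\vphi_n(g)\vphi_n(h),\ \vphi_n(gh)\big) = \dGkn(f_n(w), 1_{G_{k_n}})$, which tends to $0$ when $f$ is an approximate homomorphism for $\Gamma$ and vanishes for $n \geq N$ when $f$ is a partial homomorphism for $\Gamma$. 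Hence $\vphi$ is an approximate (respectively, partial) homomorphism.

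I do not expect a genuine obstacle here; the only point needing care is the bookkeeping in part (1): one must make sure the number of triangle-inequality error terms stays bounded independently of $n$, which it does because it is controlled by the fixed word length $\ell$ of $w$, and, in the partial case, one must name explicitly the finite set of pairs of $\Gamma$-elements on which $\vphi_n$ is required to be multiplicative for large $n$ — namely the successive partial products $\pi(s_{i_1})^{\varepsilon_1}\cdots\pi(s_{i_j})^{\varepsilon_j}$ together with the finitely many generators, their inverses, and $e$.
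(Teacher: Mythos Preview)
Your proof is correct. The paper does not actually provide a proof of this lemma: it simply states that it is ``a straightforward calculation'' and refers the reader to \cite[Lemma~4.4]{ultrametric} for details in a slightly different context, so your argument is exactly the standard computation the authors have in mind.
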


The proof is a straightforward calculation, see for instance \cite[Lemma 4.4]{ultrametric} for a detailed proof in a slightly different context.

The following easy lemma shows that for a finitely presented group, there are no genuine partial homomorphisms, in the sense that every partial homomorphism is ``close'' to a sequence of genuine homomorphisms. In particular, local stability and stability will coincide for finitely presented groups (see Lemma~\ref{lem:fp:local}).

\begin{lemma}
\label{lem:fp:partial}

Let $\Gamma$ be a finitely presented group, $\cG$ a sequence of metric groups, and let $\vphi_n:\Gamma\to G_{k_n}$ be a partial homomorphism. Then there exists a sequence of homomorphisms $\psi_n:\Gamma \to G_{k_n}$ such that for all $g\in\Gamma$, there exists $N\in\N$ such that for all $n\geq N$: $\vphi_n(g)=\psi_n(g)$.
\end{lemma}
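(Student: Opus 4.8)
The plan is to use the finite presentation $\Gamma = \langle S \mid r_1, \ldots, r_\ell \rangle$ with $S$ finite, and pass to the free group $\bF = \bF_S$ with marking $\pi : \bF \to \Gamma$, so that $\ker(\pi)$ is the normal closure $\langle\!\langle r_1, \ldots, r_\ell \rangle\!\rangle$ of finitely many words. By Lemma~\ref{lem:lift:descend}(1), the partial homomorphism $\vphi_n : \Gamma \to G_{k_n}$ lifts to a partial homomorphism $f_n : \bF \to G_{k_n}$ for $\Gamma$, meaning that for every $w \in \ker(\pi)$ there is $N$ with $f_n(w) = 1_{G_{k_n}}$ for all $n \geq N$. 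The key point is that $f_n$ is an \emph{honest} homomorphism on $\bF$ (being defined on a free basis), so it descends to an honest homomorphism of $\Gamma$ precisely when it kills $\ker(\pi)$; and since $\ker(\pi)$ is generated as a normal subgroup by the finite set $\{r_1, \ldots, r_\ell\}$, killing that finite set suffices to kill all of $\ker(\pi)$.

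The main step is therefore: apply the partial-homomorphism property of $f$ to each of the finitely many relators $r_1, \ldots, r_\ell$ in turn. For each $j \in \{1, \ldots, \ell\}$ there is $N_j \in \N$ with $f_n(r_j) = 1_{G_{k_n}}$ for all $n \geq N_j$; set $N_0 \coloneqq \max_j N_j$. Then for $n \geq N_0$ the homomorphism $f_n$ vanishes on all of $\{r_1, \ldots, r_\ell\}$, hence on the normal subgroup they generate, i.e. $\ker(\pi) \subseteq \ker(f_n)$, so $f_n$ factors through $\pi$ as a homomorphism $\bar\psi_n : \Gamma \to G_{k_n}$. For $n < N_0$ I would simply define $\bar\psi_n$ to be any homomorphism, e.g. the trivial one (the statement only requires agreement with $\vphi_n$ eventually, so the finitely many exceptional indices are irrelevant). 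Setting $\psi_n \coloneqq \bar\psi_n$, I claim this sequence of genuine homomorphisms does the job.

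It remains to check the eventual-agreement condition. Fix $g \in \Gamma$ and pick $q(g) \in \bF$ with $\pi(q(g)) = g$ (using a set-theoretic section $q : \Gamma \to \bF$ of $\pi$, which exists as in Lemma~\ref{lem:lift:descend}(2)). For $n \geq N_0$ we have, using that $f_n$ factors as $\psi_n \circ \pi$, the equality $\psi_n(g) = \psi_n(\pi(q(g))) = f_n(q(g))$. On the other hand, $f_n(q(g))$ compared to $\vphi_n(g)$: writing $g = \pi(q(g))$ and using that $f_n$ is the lift of $\vphi_n$ along the basis, the relation $f_n(q(g)) = \vphi_n(g)$ holds up to the partial-homomorphism error, which by the argument in the proof of Lemma~\ref{lem:lift:descend} vanishes eventually for each fixed $g$ (the word $q(g)$ involves only finitely many multiplications of generators, and each such multiplication is eventually exact for a partial homomorphism). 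Hence there is $N \geq N_0$ with $\psi_n(g) = f_n(q(g)) = \vphi_n(g)$ for all $n \geq N$, as required.

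The only mild subtlety — not really an obstacle — is bookkeeping the two sources of ``eventual'' behaviour: the finitely many relators (handled uniformly, giving the single threshold $N_0$ beyond which $\psi_n$ is a genuine homomorphism) versus the per-element agreement $f_n(q(g)) = \vphi_n(g)$ (which depends on $g$ and cannot be made uniform, but that is all the statement asks for). No compactness or limiting argument in the space of marked groups is needed here; finite presentability is used exactly once, to reduce killing $\ker(\pi)$ to killing a finite set, and everything else is the straightforward translation between $\Gamma$ and $\bF$ provided by Lemma~\ref{lem:lift:descend}.
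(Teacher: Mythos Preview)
Your proof is correct and follows essentially the same approach as the paper's: lift $\vphi_n$ to genuine homomorphisms $f_n$ on $\bF_S$, use the finite set of relators to find a uniform threshold $N_0$ beyond which $f_n$ descends to $\Gamma$, and then check eventual pointwise agreement. The only cosmetic difference is in the final verification: the paper observes that $\vphi_n$ and $\psi_n$ agree on the generators $\pi(S)$ and are both multiplicative on an exhausting sequence of finite sets $B_n$, hence coincide on $B_n$; you instead trace through the section $q$ and argue $f_n(q(g)) = \vphi_n(g)$ eventually, which amounts to the same computation.
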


\begin{proof}
    Let $\Gamma = \langle S \mid R \rangle$ be a finite presentation and let $\pi : \bF \to \Gamma$ be the corresponding marking. Let $f_n:\bF\to G_{k_n}$ be the corresponding partial homomorphism for $\Gamma$ from Lemma~\ref{lem:lift:descend}(1). Since $R \subset \ker(\pi)$ is finite, there exists $N$ such that for all $n \geq N$ we have $R \subset \ker(f_n)$. Hence $f_n$ induces a homomorphism $\psi_n : \Gamma \to G_n$ such that $\psi_n \circ \pi = f_n$ for $n\geq N$. Let $\psi_n$ be the trivial homomorphism for $n<N$.

    Since $\vphi$ is a partial homomorphism, there exist finite sets $B_n$ exhausting $\Gamma$ such that $\vphi_n(gh) = \vphi_n(g) \vphi_n(h)$ for all $g, h \in B_n$. Since for all $n \geq N$ the maps $\vphi_n, \psi_n : \Gamma \to G_n$ coincide on $S$ and are multiplicative on $B_n$, they coincide on all of $B_n$.
\end{proof}

\subsection{Ultraproducts}

Lastly, we add one further useful point of view. Let $\omega$ be a non-principal ultrafilter on $\N$. Let $(k_n)_n$ be a sequence of natural numbers, and let $\G$ be a sequence of metric groups. In the direct product $\prod_{n \in \N} G_{k_n}$, we consider the following subgroups:
\begin{align*}
    \cI_{alg} &\coloneqq \{ (g_n)_n \mid g_n \in G_{k_n}, \{ n \in \N \mid g_n = 1_{G_{k_n}} \} \in \omega \}; \\
    \cI_{met} &\coloneqq \{ (g_n)_n \mid g_n \in G_{k_n}, \lim\limits_{n \to \omega} \dGkn(g_n, 1_{G_{k_n}}) = 0 \}.
\end{align*}
Note that $\cI_{alg} \leq \cI_{met}$, and by bi-invariance and the triangle inequality, both are normal subgroups of the direct product.

\begin{definition}
\label{def:ultraproduct}

The group
\[\prod^{alg}_{\omega} G_{k_n} \coloneqq \prod\limits_{n \in \N} G_{k_n} / \cI_{alg} \]
is called the \emph{algebraic ultraproduct} of the sequence $(G_{k_n})_n$. The group
\[\prod^{met}_{\omega} G_{k_n} \coloneqq \prod\limits_{n \in \N} G_{k_n} / \cI_{met} \]
is called the \emph{metric ultraproduct} of the sequence $(G_{k_n})_n$.
\end{definition}

Note that there are quotient maps
\[
\prod\limits_{n \in \N} G_{k_n} \to \prod_\omega^{alg} G_{k_n} \to \prod_{\omega}^{met} G_{k_n}.
\]
By considering a set-theoretic section $q:\prod_{\omega}^{alg} G_{k_n}\to \prod_n G_{k_n}$, we see that each homomorphism $\vphi_{alg}\colon\Gamma\to \prod_{\omega}^{alg} G_{k_n}$ can be written as a sequence of maps $\vphi_n\coloneqq (q\vphi_{alg})_n \colon \Gamma\to G_{k_n}$. The same holds for the metric ultraproduct.

The notions of approximate and partial homomorphisms have an immediate reformulation in the ultraproduct framework:

\begin{lemma}
\label{lem:approximate:ultraproduct}

Let $\G$ be a sequence of metric groups and let $\vphi_n : \Gamma \to G_{k_n}$ be maps. Denote by $\vphi_\infty = (\vphi_n)_n: \Gamma \to \prod_{n \in \N} G_{k_n}$ the corresponding map to the direct product.
\begin{enumerate}
    \item If $\vphi_n : \Gamma \to G_{k_n}$ is a (separating) approximate homomorphism, then $\vphi_\infty$ descends to an (injective) homomorphism $\vphi_{met} : \Gamma \to \prod_{\omega}^{met} G_{k_n}$. Conversely, if $\vphi_\infty$ descends to an (injective) homomorphism $\vphi_{met} : \Gamma \to \prod_{\omega}^{met} G_{k_n}$, then there is a strictly increasing sequence $(n_m)_m \in \omega$ such that $(\vphi_{n_m})_m$ is a (separating) approximate homomorphism.
    \item If $\vphi_n : \Gamma \to G_{k_n}$ is a partial homomorphism, then $\vphi_\infty$ descends to a homomorphism $\vphi_{alg} : \Gamma \to \prod_{\omega}^{alg} G_{k_n}$. Conversely, if $\vphi_\infty$ descends to an homomorphism $\vphi_{alg} : \Gamma \to \prod_{\omega}^{alg} G_{k_n}$, then there is a strictly increasing sequence $(n_m)_m \in \omega$ such that $(\vphi_{n_m})_m$ is a partial homomorphism.
\end{enumerate}
\end{lemma}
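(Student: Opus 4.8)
The plan is to prove both statements by directly unwinding the definitions of the subgroups $\cI_{met}$ and $\cI_{alg}$ and using the basic fact that $\omega$ is a non-principal ultrafilter, so that a set belongs to $\omega$ if and only if its complement does not, and every cofinite set belongs to $\omega$. Since the two parts are formally parallel (replace ``$\lim_{n\to\omega}\dGkn(\cdot,1)=0$'' by ``$\{n: \cdot = 1\}\in\omega$'' throughout), I would write the argument for part~(1) in detail and indicate that part~(2) follows mutatis mutandis.

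For the forward direction of (1): suppose $\vphi_n:\Gamma\to G_{k_n}$ is an approximate homomorphism. I would first check that the composition $\vphi_{met}\coloneqq p\circ\vphi_\infty$, where $p:\prod_n G_{k_n}\to\prod_\omega^{met}G_{k_n}$ is the quotient map, is a homomorphism: for $g,h\in\Gamma$ the element $\vphi_\infty(gh)\vphi_\infty(h)^{-1}\vphi_\infty(g)^{-1}$ has $n$-th coordinate $\vphi_n(gh)\vphi_n(h)^{-1}\vphi_n(g)^{-1}$, whose $\dGkn$-distance to $1_{G_{k_n}}$ equals $\dGkn(\vphi_n(gh),\vphi_n(g)\vphi_n(h))$ by bi-invariance, and this tends to $0$ along $\N$, hence along $\omega$; so this element lies in $\cI_{met}$ and $\vphi_{met}(gh)=\vphi_{met}(g)\vphi_{met}(h)$. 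Injectivity when $\vphi$ is separating: if $e\neq g\in\Gamma$, then $\liminf_{n\to\infty}\dGkn(\vphi_n(g),1)>0$ implies $\lim_{n\to\omega}\dGkn(\vphi_n(g),1)>0$ (the $\omega$-limit of a sequence is at least its $\liminf$ over $\N$), so $\vphi_\infty(g)\notin\cI_{met}$, i.e. $\vphi_{met}(g)\neq 1$.

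For the converse direction of (1): given an (injective) homomorphism $\vphi_{met}:\Gamma\to\prod_\omega^{met}G_{k_n}$, choose a set-theoretic section as in the paragraph before the lemma, giving maps $\vphi_n:\Gamma\to G_{k_n}$ with $p\circ\vphi_\infty=\vphi_{met}$. For each pair $(g,h)$ from a fixed enumeration of $\Gamma\times\Gamma$ and each $k\in\N$, the set $A_{g,h,k}\coloneqq\{n: \dGkn(\vphi_n(gh),\vphi_n(g)\vphi_n(h))<1/k\}$ lies in $\omega$ because $\vphi_{met}$ is multiplicative; similarly, if $\vphi_{met}$ is injective, then for each $e\neq g$ and each $k$ large, $B_{g,k}\coloneqq\{n:\dGkn(\vphi_n(g),1)>c_g\}\in\omega$ for some fixed $c_g>0$ (coming from $\lim_{n\to\omega}\dGkn(\vphi_n(g),1)>0$). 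Enumerate all these countably many conditions and build a strictly increasing sequence $(n_m)_m$ by a diagonal argument: at stage $m$, intersect the first $m$ of the relevant sets (a finite intersection of elements of $\omega$, hence in $\omega$, hence infinite) and pick $n_m$ in it greater than $n_{m-1}$. Then $(n_m)_m\in\omega$ by the standard fact that a sequence hitting, for each $m$, the $m$-th term of a decreasing chain of $\omega$-sets lies in $\omega$; and along $(n_m)_m$ the subsequence $(\vphi_{n_m})_m$ is a (separating) approximate homomorphism.

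The routine parts are the bi-invariance computations and the diagonal extraction; I do not expect a genuine obstacle here. The one point that needs a little care is the precise statement ``$(n_m)_m\in\omega$'': since we only arranged $n_m\in A_m$ where $A_1\supseteq A_2\supseteq\cdots$ are in $\omega$, one must invoke (and perhaps briefly justify) that such a sequence, viewed as a subset of $\N$, is $\omega$-large, which holds because for each fixed $j$ the tail $\{n_m: m\geq j\}\subseteq A_j$, so the complement of $\{n_m\}$ contains no $A_j$ and hence is not in $\omega$. I would state this as a one-line sublemma or simply inline it. Everything else is bookkeeping, and part~(2) is obtained by the verbatim substitution indicated above, using that $\cI_{alg}$ is defined by an $\omega$-condition in exactly the same way.
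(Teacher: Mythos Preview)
Your overall plan is exactly what the paper has in mind: it gives no argument beyond ``this follows directly from the definitions'' and a reference, and your unwinding of the definitions for the forward directions, together with a countable diagonal extraction for the converse, is the intended route. Two small points: in the converse you do not need to ``choose a set-theoretic section'', since the maps $\vphi_n$ are part of the hypothesis of the lemma; and your remark that part~(2) follows by the same substitution is correct.

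There is, however, a genuine gap in the step you yourself flag as delicate. Your justification that $\{n_m:m\in\N\}\in\omega$ is a non-sequitur: from $\{n_m:m\geq j\}\subseteq A_j$ you conclude that the complement $\N\setminus\{n_m\}$ ``contains no $A_j$ and hence is not in $\omega$''. But membership in an ultrafilter does not require containing one of the $A_j$; plenty of $\omega$-large sets meet every $A_j$ without containing any of them. Worse, the statement you are trying to prove is in general false: if $S\in\omega$ has increasing enumeration $n_1<n_2<\cdots$ with $n_m\in A_m$, then $S\setminus A_m\subseteq\{n_1,\ldots,n_{m-1}\}$ is finite for every $m$, so $S$ is a pseudo-intersection of the decreasing chain $(A_m)$ inside $\omega$. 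The existence of such an $S$ for \emph{every} decreasing chain in $\omega$ is precisely the definition of $\omega$ being a P-point, a property not enjoyed by all non-principal ultrafilters. So the ``standard fact'' you invoke is not a fact. For the uses of the lemma in the paper (e.g.\ in the proof of Theorem~\ref{thm:char}, where one also wants $\tau_{n_m}\circ\psi_m\to\chi$), the clean fix is to add those finitely many extra convergence conditions to your countable list \emph{before} running the diagonal extraction; then the resulting subsequence has all the required properties and one never needs the set $\{n_m\}$ itself to lie in $\omega$.
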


This follows directly from the definitions, we refer the reader to \cite[Sections 7.2 and 7.6]{cellular} for proofs in the context of the family $\cS$, which carry through verbatim in the general context.

\subsection{Characters}

For amenable groups, we will be able to establish a \emph{local character criterion} in Theorem~\ref{thm:char} for checking local HS-stability (see Definition~\ref{def:HSstable}). Therefore we briefly review characters here. Recall that a function $\chi:\Gamma\to\C$ on a countable discrete group $\Gamma$ is called \emph{positive definite} if for every $n\in\N$ and every set $\{g_1, \ldots, g_n\}\subset\Gamma$, the matrix $[\chi(g_i^{-1}g_j)]_{i,j=1}^n$ is positive definite, or equivalently, if $\chi$ induces a state on the group algebra $\C\Gamma$.

\begin{definition}
    Let $\Gamma$ be a countable discrete group. A map $\chi:\Gamma\to\C$ is called a \emph{character} if
    \begin{enumerate}
        \item $\chi$ is positive definite,
        \item $\chi(gh) = \chi(hg)$ for all $g,h\in\Gamma$, and
        \item $\chi(e) = 1$.
    \end{enumerate}
    A character is called \emph{extremal} (or \emph{indecomposable}) if it cannot be written as a nontrivial convex combination of characters.
\end{definition}

We note that with the topology of pointwise convergence, the space of characters is a compact convex space, and the extremal characters are exactly the extreme points of this compact convex set.
Hence by the Krein--Milman Theorem, the space of characters is equal to the closed convex hull of the set of extremal characters.

\begin{example}
    If $\pi:\Gamma\to (M,\tau)$ is a representation of $\Gamma$ into a tracial von Neumann algebra (e.g. a finite-dimensional representation), then $\tau\circ\pi$ is a character on $\Gamma$. The GNS construction \cite[1.B]{characters} shows that every character arises in this way. Moreover, it is well-known and easy to check that extremal characters correspond to exactly those representations where the von Neumann algebra generated by $\pi(\Gamma)$ is a factor.
\end{example}

\begin{example} 
\label{ex:trivialextension}
   Suppose $\Sigma\trianglelefteq\Gamma$ is a normal subgroup, and $\chi:\Sigma\to\C$ is a $\Gamma$-invariant character. Then putting $\chi(g)=0$ for $g\in \Gamma\setminus\Sigma$ defines a character on $\Gamma$, which is sometimes called the \emph{trivial extension} of $\chi$ from $\Sigma$ to $\Gamma$. For instance, this works for any character defined on a central subgroup.
   More generally, one can induce sufficiently invariant characters on sufficiently normal subgroups, see for instance \cite[Section~3]{levit:vigdorovich}.
\end{example}

\section{Stability}\label{sec:stability}

\subsection{Definitions}

Notions of stability can be traced back to early works of von Neumnann \cite{vN} and Turing \cite{turing}, and more concretely to Ulam \cite[Chapter VI]{ulam}. The modern definition was introduced by Arzhantseva and P\u{a}unescu \cite{AP:commuting} in the case of $\G = \cS$, and the case of $\G = \cU$ was formally introduced by Hadwin and Shulman in \cite{HS2}. The flexible notions were suggested in \cite{BL:T}, and the local notion in the case of $\G = \cS$ was introduced in \cite{bradford}.

In order to define stability, the first step is to formalize what we mean by ``small perturbation''. Recall the notion of (directed) sequence of metric groups from Definitions \ref{def:metric} and \ref{def:directed}.

\begin{definition}
\label{def:close}

Let $\G$ be a sequence of metric groups, and let $\vphi_n, \psi_n : \Gamma \to G_{k_n}$ be approximate homomorphisms. We say that $(\vphi_n)_n$ and $(\psi_n)_n$ are \emph{close} if for all $g \in \Gamma$ it holds that
\[
\lim\limits_{n \to \infty} \dGkn(\vphi_n(g), \psi_n(g)) = 0.
\]

If $\G$ is moreover a directed sequence of metric groups, and $\vphi_n : \Gamma \to G_{k_n}, \psi_n : \Gamma \to G_{m_n}$ are approximate homomorphisms, where $m_n \geq k_n$. We say that $(\vphi_n)_n$ and $(\psi_n)_n$ are \emph{close} if for all $g \in \Gamma$ it holds
\[\lim\limits_{n \to \infty} \DGkn(\vphi_n(g), \psi_n(g)) = 0.\]
\end{definition}

The second half of the definition says that, in case $\G$ is a directed sequence of metric groups, we can use the additional structure to compare two approximate homomorphisms with potentially different range. Recall that the function $\DGkn : G_{k_n} \times G_{m_n} \to \R_{\geq 0}$ restricts to the usual distance function on $G_{k_n} \times G_{k_n}$, so the two definitions are compatible.

A first observation is that it suffices to check closeness of $\vphi_n, \psi_n : \Gamma \to G_{k_n}$ on a generating set.

\begin{lemma}
\label{lem:close:generatingset}

Let $\G$ be a sequence of metric groups, $\vphi_n, \psi_n : \Gamma \to G_{k_n} \in \G$ approximate homomorphisms, and suppose $\Gamma$ is generated by a set $S$. Then $\vphi_n$ and $\psi_n$ are close if and only if
\[\lim\limits_{n \to \infty} d_{G_{k_n}}(\vphi_n(s), \psi_n(s)) = 0\]
for all $s \in S$.
\end{lemma}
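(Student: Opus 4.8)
The forward implication is immediate: if $(\vphi_n)_n$ and $(\psi_n)_n$ are close, then in particular $d_{G_{k_n}}(\vphi_n(s),\psi_n(s)) \to 0$ for every $s \in S \subseteq \Gamma$. For the converse, the plan is a straightforward induction on word length, using bi-invariance of the metrics together with the defining property of approximate homomorphisms.

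First I would record two preliminary observations. (a) Since $\vphi_n$ is an approximate homomorphism, $d_{G_{k_n}}(\vphi_n(e), \vphi_n(e)\vphi_n(e)) \to 0$; by left-invariance this equals $d_{G_{k_n}}(1_{G_{k_n}}, \vphi_n(e))$, so $\vphi_n(e) \to 1_{G_{k_n}}$, and likewise $\psi_n(e) \to 1_{G_{k_n}}$. Consequently $d_{G_{k_n}}(\vphi_n(s)\vphi_n(s^{-1}), 1_{G_{k_n}}) \to 0$ for every $s \in S$, which by bi-invariance says $d_{G_{k_n}}(\vphi_n(s^{-1}), \vphi_n(s)^{-1}) \to 0$, and similarly for $\psi_n$. (b) By bi-invariance one has $d_{G_{k_n}}(a^{-1}, b^{-1}) = d_{G_{k_n}}(a,b)$ and $d_{G_{k_n}}(ac, bd) \le d_{G_{k_n}}(a,b) + d_{G_{k_n}}(c,d)$ for all $a,b,c,d \in G_{k_n}$. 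Combining (a) and (b) shows that if $d_{G_{k_n}}(\vphi_n(s), \psi_n(s)) \to 0$ for all $s \in S$, then also $d_{G_{k_n}}(\vphi_n(s^{-1}), \psi_n(s^{-1})) \to 0$; hence the hypothesis extends from $S$ to $S \cup S^{-1}$.

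Next I would run the induction on the length of a reduced word representing $g \in \Gamma$. If $g = e$ we are done by (a). Otherwise write $g = h t$ with $t \in S \cup S^{-1}$ and $h$ a shorter word, so that $d_{G_{k_n}}(\vphi_n(h), \psi_n(h)) \to 0$ by the inductive hypothesis. Inserting $\vphi_n(h)\vphi_n(t)$ and $\psi_n(h)\psi_n(t)$ and applying the triangle inequality,
\[
d_{G_{k_n}}(\vphi_n(g), \psi_n(g)) \le d_{G_{k_n}}(\vphi_n(ht), \vphi_n(h)\vphi_n(t)) + d_{G_{k_n}}(\vphi_n(h)\vphi_n(t), \psi_n(h)\psi_n(t)) + d_{G_{k_n}}(\psi_n(h)\psi_n(t), \psi_n(ht)).
\]
The outer two terms tend to $0$ because $\vphi_n$ and $\psi_n$ are approximate homomorphisms, and by observation (b) the middle term is bounded by $d_{G_{k_n}}(\vphi_n(h), \psi_n(h)) + d_{G_{k_n}}(\vphi_n(t), \psi_n(t))$, which tends to $0$ by the inductive hypothesis and the extended base case. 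This yields $d_{G_{k_n}}(\vphi_n(g), \psi_n(g)) \to 0$, completing the induction and the proof.

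There is no genuine obstacle here; the only points requiring a little care are the reduction from $S$ to $S \cup S^{-1}$ (handled via $\vphi_n(e) \to 1_{G_{k_n}}$) and checking that each inequality used is a bona fide consequence of bi-invariance. The argument can alternatively be packaged through the marking $\pi : \bF \to \Gamma$ and Lemma~\ref{lem:lift:descend}, but the direct version above is shortest.
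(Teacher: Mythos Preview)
Your proof is correct and follows exactly the approach the paper intends: the paper's own proof is the single sentence ``the `if' follows from the triangle inequality and the definition of approximate homomorphism,'' and what you have written is precisely the careful unpacking of that sentence via induction on word length and bi-invariance.
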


\begin{proof}
The ``only if'' is obvious, the ``if'' follows from the triangle inequality and the definition of approximate homomorphism.
\end{proof}

\begin{definition}
\label{def:stable}
    Let $\G$ be a sequence of metric groups. The group $\Gamma$ is called
    \begin{enumerate}[label = (\roman*)]
        \item \emph{$\G$-stable} if every approximate homomorphism $\vphi_n : \Gamma \to G_{k_n}$ is close to a sequence of genuine homomorphisms $\psi_n : \Gamma \to G_{k_n}$.
        \item \emph{locally $\G$-stable} if every approximate homomorphism $\vphi_n : \Gamma \to G_{k_n}$ is close to a partial homomorphism $\psi_n : \Gamma \to G_{k_n}$.
    \end{enumerate}
\end{definition}

In case $\G$ is a directed sequence of metric groups, we can also define flexible notions of stability:

\begin{definition}
\label{def:flexibly}
    Let $\G$ be a directed sequence of metric groups. The group $\Gamma$ is called
    \begin{enumerate}[label = (\roman*)]
        \item \emph{flexibly $\G$-stable} if for every approximate homomorphism $\vphi_n : \Gamma \to G_{k_n}$ there exists a sequence $m_n \geq k_n$ with $\frac{m_n}{k_n} \xrightarrow{n \to \infty} 1$, and homomorphisms $\psi_n : \Gamma \to G_{m_n}$ such that $\vphi_n : \Gamma \to G_{k_n}$ and $\psi_n : \Gamma \to G_{m_n}$ are close.
        \item \emph{very flexibly $\G$-stable} if in (i) we do not require that $\frac{m_n}{k_n} \xrightarrow{n \to \infty} 1$.
        \item \emph{flexibly locally $\G$-stable} if for every approximate homomorphism $\vphi_n : \Gamma \to G_{k_n}$ there exists a sequence $m_n \geq k_n$ with $\frac{m_n}{k_n} \xrightarrow{n \to \infty} 1$, and a partial homomorphism $\psi_n : \Gamma \to G_{m_n}$ such that $\vphi_n : \Gamma \to G_{k_n}$ and $\psi_n : \Gamma \to G_{m_n}$ are close.
        \item \emph{very flexibly locally $\G$-stable} if in (iii) we do not require the condition $\frac{m_n}{k_n} \xrightarrow{n \to \infty} 1$.
    \end{enumerate}
\end{definition}

In our two main cases of interest, we rephrase the definitions by analogy with existing terminology:

\begin{definition}
\label{def:HSstable}

A ((very) flexibly) (locally) $\cS$-stable group will be called \emph{((very) flexibly) (locally) P-stable}. A ((very) flexibly) (locally) $\cU$-stable group will be called \emph{((very) flexibly) (locally) HS-stable}.
\end{definition}

\subsection{Reformulations}

The notions of stability admit reformulations in terms of markings and ultraproducts, which we work out next.

\begin{definition}
\label{def:close:descend}

Let $\pi : \bF \to \Gamma$ be a marking, and let $\G$ be a sequence of metric groups. Let $f_n, h_n : \bF \to G_{k_n}$ be two sequences descending to approximate homomorphisms for $\Gamma$. We say that $f$ and $h$ are \emph{close} if for all $w \in \bF$, it holds that
\[\lim\limits_{n \to \infty} \dGkn(f_n(w), h_n(w)) = 0.\]
Just as in Definition \ref{def:close}, this definition extends to approximate homomorphisms for $\Gamma$ with potentially different ranges, in case $\G$ is a directed sequence of metric groups.
\end{definition}

\begin{remark}
\label{rem:close:basis}

Let $S$ be a generating set of $\bF$. An easy application of the triangle inequality implies that $f$ and $h$ are close if and only if
\[\lim\limits_{n \to \infty} \dGkn(f_n(s), h_n(s)) = 0\]
for all $s \in S$.
\end{remark}

The proof of the following lemma is a straightforward calculation using Lemma~\ref{lem:lift:descend} which we leave up to the interested reader.

\begin{lemma}
\label{lem:stable:descend}

Let $\G$ be a sequence of metric groups, and let $\pi : \bF \to \Gamma$ be a marking.
\begin{enumerate}
    \item $\Gamma$ is $\G$-stable if and only if every sequence of homomorphism $f : \bF \to G_{k_n}$ which is an approximate homomorphism for $\Gamma$ is close to a sequence of homomorphisms $h : \bF \to G_{k_n}$ that are pullbacks of genuine homomorphisms of $\Gamma$.
    \item $\Gamma$ is locally $\G$-stable if and only if every sequence of homomorphism $f : \bF \to G_{k_n}$ which is an approximate homomorphism for $\Gamma$ is close to a sequence of homomorphisms $p : \bF \to G_{k_n}$ which is a partial homomorphism for $\Gamma$.
\end{enumerate}
\end{lemma}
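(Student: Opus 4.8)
The plan is to prove both items by shuttling between maps defined on $\bF$ and maps defined on $\Gamma$ via the correspondence of Lemma~\ref{lem:lift:descend}, and to check that the ``close'' relation survives the translation by testing it on a free basis $S$ of $\bF$ (Remark~\ref{rem:close:basis}) or on the generating set $\pi(S)$ of $\Gamma$ (Lemma~\ref{lem:close:generatingset}). Items (1) and (2) have literally the same proof, with ``genuine homomorphism'' replaced by ``partial homomorphism'' and the ``homomorphism'' clauses of Lemma~\ref{lem:lift:descend} replaced by its ``partial homomorphism'' clauses, together with the trivial observation that $\psi_n\circ\pi$ is a partial homomorphism for $\Gamma$ exactly when $\psi_n$ is a partial homomorphism of $\Gamma$; so I would write (1) out and indicate the changes for (2).

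For the forward direction of (1), assume $\Gamma$ is $\G$-stable and let $f:\bF\to\G$ be a sequence of homomorphisms that is an approximate homomorphism for $\Gamma$. Fix a section $q:\Gamma\to\bF$ with $\pi q=\id_\Gamma$ and set $\vphi_n\coloneqq f_n\circ q$; by Lemma~\ref{lem:lift:descend}(2) this is an approximate homomorphism $\Gamma\to\G$, hence close to genuine homomorphisms $\psi_n:\Gamma\to G_{k_n}$. Put $h_n\coloneqq\psi_n\circ\pi$, which is a pullback of a genuine homomorphism of $\Gamma$. To see $h$ is close to $f$ it suffices, by Remark~\ref{rem:close:basis}, to estimate $\dGkn(h_n(s),f_n(s))$ for $s\in S$: here $h_n(s)=\psi_n(\pi(s))$ is close to $\vphi_n(\pi(s))=f_n(q(\pi(s)))$, and $\dGkn(f_n(q(\pi(s))),f_n(s))\to 0$ since $q(\pi(s))s^{-1}\in\ker(\pi)$ and $f$ is an approximate homomorphism for $\Gamma$ with each $f_n$ a genuine homomorphism on $\bF$; the triangle inequality then gives $\dGkn(h_n(s),f_n(s))\to 0$. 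Conversely, assume the displayed condition and let $\vphi:\Gamma\to\G$ be an approximate homomorphism. By Lemma~\ref{lem:lift:descend}(1), $f_n(s)\coloneqq\vphi_n(\pi(s))$ defines a sequence of homomorphisms $f:\bF\to\G$ that is an approximate homomorphism for $\Gamma$, hence close to some $h_n=\psi_n\circ\pi$ with $\psi_n:\Gamma\to G_{k_n}$ genuine homomorphisms. By Lemma~\ref{lem:close:generatingset}, closeness of $\psi_n$ and $\vphi_n$ follows from $\dGkn(\psi_n(\pi(s)),\vphi_n(\pi(s)))=\dGkn(h_n(s),f_n(s))\to 0$ for $s\in S$, which is the hypothesis; so $\Gamma$ is $\G$-stable. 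For (2) one repeats both arguments with ``partial'' in place of ``genuine'' and the corresponding clauses of Lemma~\ref{lem:lift:descend}.

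The only genuinely non-formal point — the ``main obstacle'', modest as it is — is that a section $q$ satisfies $\pi q=\id_\Gamma$ but not $q\pi=\id_\bF$, so $f_n(q(\pi(s)))$ is not literally $f_n(s)$. This discrepancy is exactly controlled by the defining property of an approximate (resp.\ partial) homomorphism for $\Gamma$: it forces each $f_n$ to be asymptotically (resp.\ eventually) trivial on $\ker(\pi)$, and since $f_n$ is an honest homomorphism on the free group $\bF$, bi-invariance of $\dGkn$ converts this into the needed comparison $\dGkn(f_n(q(\pi(s))),f_n(s))\to 0$. Everything else is bookkeeping with Lemmas~\ref{lem:lift:descend}, \ref{lem:close:generatingset} and Remark~\ref{rem:close:basis}, which is why the detailed verification can safely be left to the reader.
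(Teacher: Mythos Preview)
Your proposal is correct and matches the paper's approach exactly: the paper states only that the proof is a straightforward calculation using Lemma~\ref{lem:lift:descend} and leaves the details to the reader, and your argument supplies precisely these details via Lemma~\ref{lem:close:generatingset} and Remark~\ref{rem:close:basis}. One cosmetic remark: in the forward direction of~(2) the map $\psi_n\circ\pi$ is not literally a homomorphism on $\bF$ when $\psi_n$ is only a partial homomorphism, so the $p_n$ you want is the one produced by Lemma~\ref{lem:lift:descend}(1) (freely extending $s\mapsto\psi_n(\pi(s))$), as you in fact indicate.
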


Finally, one can conveniently describe (local) stability in terms of ultraproducts. Fix a non-principal ultrafilter $\omega$ on $\N$, and suppose $\vphi_n, \psi_n : \Gamma \to G_{k_n}$ are approximate homomorphisms. It is immediate from the definitions that if they are close, then the corresponding maps $\vphi_{met}, \psi_{met} : \Gamma \to \prod_{\omega}^{met} G_{k_n}$ coincide (cf. Lemma \ref{lem:approximate:ultraproduct}). Conversely, if $\vphi_{met}$ and $\psi_{met}$ coincide, then there exist an increasing sequence $n_m$ such that $(\vphi_{n_m})_{m}$ and $(\psi_{n_m})_{m}$ are close.

The following lemma gives an interpretation of stability in terms of lifting properties. This was first noticed in \cite{AP:commuting} for stability, see \cite[Section 2.3.3]{bradford} for the analogous statement for local stability.

\begin{lemma}
\label{lem:stable:ultraproduct}
Let $\G$ be a sequence of metric groups, and let $\Gamma$ be a group. Let $\omega$ be a non-principal ultrafilter on $\N$. Then
\begin{enumerate}
    \item $\Gamma$ is $\G$-stable if and only if for every sequence of natural numbers $(k_n)_n$, every homomorphism $\Gamma \to \prod_\omega^{met} G_{k_n}$ lifts to a homomomorphism $\Gamma \to \prod_{n \in \N} G_{k_n}$.
    \item $\Gamma$ is locally $\G$-stable if and only if for every sequence of natural numbers $(k_n)_n$, every homomorphism $\Gamma \to \prod_\omega^{met} G_{k_n}$ lifts to a homomomorphism $\Gamma \to \prod_\omega^{alg} G_{k_n}$.
\end{enumerate}
\end{lemma}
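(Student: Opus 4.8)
The plan is to prove both equivalences by unpacking the definitions through the dictionary established in Lemma~\ref{lem:approximate:ultraproduct}, which translates between approximate homomorphisms $\Gamma \to G_{k_n}$ and homomorphisms into the metric ultraproduct, and between partial homomorphisms and homomorphisms into the algebraic ultraproduct. The key observation, noted in the paragraph preceding the lemma, is that two approximate homomorphisms $\vphi_n, \psi_n$ are close if and only if the induced maps $\vphi_{met}, \psi_{met} : \Gamma \to \prod_\omega^{met} G_{k_n}$ agree (after possibly passing to a subsequence in $\omega$ in one direction). So the whole statement is really just a careful rearrangement of ``closeness $\leftrightarrow$ equality in the metric ultraproduct'' composed with the characterizations of which sequences descend to homomorphisms of $\Gamma$.

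For part (1): if $\Gamma$ is $\G$-stable, take any homomorphism $\Phi : \Gamma \to \prod_\omega^{met} G_{k_n}$; using a set-theoretic section it is represented by maps $\vphi_n : \Gamma \to G_{k_n}$ which, since $\Phi$ is a homomorphism, form an approximate homomorphism (along $\omega$), hence after reindexing along a subsequence in $\omega$ form an honest approximate homomorphism. Stability gives genuine homomorphisms $\psi_n$ close to $\vphi_n$; the sequence $(\psi_n)_n$ defines a homomorphism $\Gamma \to \prod_{n} G_{k_n}$, and closeness together with the agreement criterion forces this to be a lift of $\Phi$ (on the subsequence, which suffices since $\omega$ contains it). Conversely, given an approximate homomorphism $\vphi_n : \Gamma \to G_{k_n}$, it induces $\vphi_{met} : \Gamma \to \prod_\omega^{met} G_{k_n}$ by Lemma~\ref{lem:approximate:ultraproduct}(1); a lift to $\prod_n G_{k_n}$ is by definition a sequence of honest homomorphisms $\psi_n$, and the lifting condition says precisely that $\psi_{met} = \vphi_{met}$, i.e. that $(\psi_n)$ and $(\vphi_n)$ become close along a subsequence in $\omega$. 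A small point to address here is that stability as defined requires closeness along the \emph{full} sequence $\N$, not just along $\omega$; one handles this by applying the lifting hypothesis not just to $(k_n)_{n\in\N}$ but to arbitrary subsequences (or by a standard diagonal argument), so that failure of closeness along any subsequence is ruled out.

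Part (2) is entirely parallel, replacing ``genuine homomorphism $\Gamma \to G_{k_n}$'' by ``partial homomorphism'', ``sequence of genuine homomorphisms'' by ``homomorphism $\Gamma \to \prod_\omega^{alg} G_{k_n}$'' via Lemma~\ref{lem:approximate:ultraproduct}(2), and using that a homomorphism $\Gamma \to \prod_\omega^{alg} G_{k_n}$ composed with the quotient $\prod_\omega^{alg} \to \prod_\omega^{met}$ recovers $\vphi_{met}$ — so ``$\Phi$ lifts through the algebraic ultraproduct'' is exactly ``the partial homomorphism close to $\vphi_n$ exists''. The only genuine subtlety in the whole proof — and the step I expect to require the most care — is the bookkeeping around ultrafilters and subsequences: Lemma~\ref{lem:approximate:ultraproduct} only guarantees the approximate/partial homomorphism property along some subsequence $(n_m)_m \in \omega$, whereas the definitions of stability quantify over all of $\N$. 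Making sure that passing to subsequences is harmless (because the lifting statements are required for \emph{every} sequence $(k_n)_n$, hence in particular for every subsequence of a given one, and because $\omega$ is non-principal so it contains cofinite-like sets one can exploit) is the crux; everything else is a direct translation. This is why the excerpt defers the proof as ``a straightforward calculation'' and refers to \cite[Section 2.3.3]{bradford} and \cite{AP:commuting} for the analogous arguments.
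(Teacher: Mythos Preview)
Your plan is correct and matches the approach the paper expects: the paper does not give its own proof of this lemma, instead citing \cite{AP:commuting} for stability and \cite[Section 2.3.3]{bradford} for local stability, and simply summarizing the statement in a commutative diagram. The argument you outline via Lemma~\ref{lem:approximate:ultraproduct} and the closeness/agreement observation preceding the lemma is exactly the standard one indicated by those references, including the subsequence bookkeeping you flag as the only real subtlety.
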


We summarize the lemma in the following diagram:

\[\begin{tikzcd}
	&&& {\prod_{n \in \N} G_{k_n}} \\
	\\
	\Gamma &&& {\prod_{\omega}^{alg} G_{k_n}} \\
	\\
	&&& {\prod_{\omega}^{met} G_{k_n}}
	\arrow[from=3-1, to=5-4]
	\arrow["{\text{locally stable}}", dashed, from=3-1, to=3-4]
	\arrow["{\text{stable}}", dashed, from=3-1, to=1-4]
	\arrow[from=1-4, to=3-4]
	\arrow[from=3-4, to=5-4]
\end{tikzcd}\]

\subsection{First basic observations}

The following lemma is clear from, e.g. the previous commutative diagram:

\begin{lemma}
Let $\G$ be a sequence of metric groups. If $\Gamma$ is $\G$-stable, then it is locally $\G$-stable.
\end{lemma}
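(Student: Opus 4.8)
The plan is to prove that $\G$-stability implies local $\G$-stability directly from the definitions (Definition~\ref{def:stable}), since the only difference between the two notions is that the approximating sequence $(\psi_n)_n$ is required to be a sequence of genuine homomorphisms in the former, and merely a partial homomorphism in the latter. The key observation is that every genuine homomorphism is in particular a partial homomorphism: if $\psi_n : \Gamma \to G_{k_n}$ satisfies $\psi_n(gh) = \psi_n(g)\psi_n(h)$ for \emph{all} $n$ and all $g, h \in \Gamma$, then trivially for each pair $g, h \in \Gamma$ there exists $N \in \N$ (namely $N = 1$) such that the relation holds for all $n \geq N$. Thus a sequence of genuine homomorphisms automatically qualifies as a partial homomorphism in the sense of Definition~\ref{def:partialapprox}(i).

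Concretely, I would argue as follows. Suppose $\Gamma$ is $\G$-stable, and let $\vphi_n : \Gamma \to G_{k_n}$ be an arbitrary approximate homomorphism. By $\G$-stability, there is a sequence of genuine homomorphisms $\psi_n : \Gamma \to G_{k_n}$ that is close to $(\vphi_n)_n$ in the sense of Definition~\ref{def:close}, i.e. $\lim_{n \to \infty} \dGkn(\vphi_n(g), \psi_n(g)) = 0$ for all $g \in \Gamma$. By the observation above, $(\psi_n)_n$ is a partial homomorphism. Hence $(\vphi_n)_n$ is close to a partial homomorphism, which is exactly the condition for local $\G$-stability. Since $(\vphi_n)_n$ was arbitrary, $\Gamma$ is locally $\G$-stable.

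Alternatively, as the excerpt suggests, one can read this off the commutative diagram preceding Lemma~\ref{lem:stable:ultraproduct}: by that lemma, $\G$-stability means every homomorphism $\Gamma \to \prod_\omega^{met} G_{k_n}$ lifts all the way to $\prod_{n \in \N} G_{k_n}$, while local $\G$-stability only asks for a lift to $\prod_\omega^{alg} G_{k_n}$. Since there is a canonical quotient map $\prod_{n \in \N} G_{k_n} \to \prod_\omega^{alg} G_{k_n}$, composing a full lift with this quotient map produces the desired lift to the algebraic ultraproduct. Either argument is essentially a one-line implication, and I do not anticipate any genuine obstacle; the only thing to be mildly careful about is to state explicitly which definition of ``close'' is in play (the non-flexible one, with matching ranges $G_{k_n}$), so that the two notions are being compared on the same footing.
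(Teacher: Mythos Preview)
Your proposal is correct and matches the paper's own treatment: the paper does not give an explicit proof but simply says the lemma is clear from the commutative diagram preceding Lemma~\ref{lem:stable:ultraproduct}, which is precisely your second (ultraproduct) argument. Your first direct-from-definitions argument is equally valid and arguably more self-contained.
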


Next, we note that although local $\G$-stability is weaker than $\G$-stability (we will see our first examples in the next subsection), there is no difference between the two notions for finitely presented groups (cf. \cite[Lemma~2.14]{bradford}).

\begin{lemma}\label{lem:fp:local}

    Let $\G$ be a sequence of metric groups, and assume $\Gamma$ is finitely presented. Then $\Gamma$ is locally $\G$-stable if and only if $\Gamma$ is $\G$-stable. The same statement holds for the (very) flexible versions.
\end{lemma}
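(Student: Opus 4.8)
The plan is to show that for a finitely presented $\Gamma$, the distinction between "close to a partial homomorphism" and "close to a sequence of genuine homomorphisms" collapses, which then immediately gives the equivalence of the two stability notions (local and classical) as well as their flexible counterparts. The key input is Lemma~\ref{lem:fp:partial}, which says that for a finitely presented group every partial homomorphism into a sequence of metric groups $G_{k_n}$ already eventually agrees (pointwise) with a sequence of genuine homomorphisms $\psi_n : \Gamma \to G_{k_n}$.

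First I would observe that one implication is free: $\G$-stability always implies local $\G$-stability, by the lemma stated just before, and likewise flexible $\G$-stability implies flexibly locally $\G$-stable, etc., directly from the definitions (a genuine homomorphism is in particular a partial homomorphism). So the content is the reverse direction. Suppose $\Gamma$ is finitely presented and locally $\G$-stable, and let $\vphi_n : \Gamma \to G_{k_n}$ be an approximate homomorphism. By local $\G$-stability there is a partial homomorphism $\psi_n : \Gamma \to G_{k_n}$ that is close to $\vphi_n$. Now apply Lemma~\ref{lem:fp:partial} to $\psi_n$: there is a sequence of genuine homomorphisms $\rho_n : \Gamma \to G_{k_n}$ such that for every $g \in \Gamma$ there is $N$ with $\psi_n(g) = \rho_n(g)$ for all $n \geq N$. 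In particular $\psi_n$ and $\rho_n$ are close (their distance is eventually $0$ at each point), and since closeness is transitive (triangle inequality, checked pointwise), $\vphi_n$ is close to the genuine homomorphisms $\rho_n$. Hence $\Gamma$ is $\G$-stable.

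For the (very) flexible versions, the argument is essentially the same once one checks that Lemma~\ref{lem:fp:partial} applies in the flexible setting too. If $\Gamma$ is flexibly locally $\G$-stable (with $\G$ now a directed sequence of metric groups) and $\vphi_n : \Gamma \to G_{k_n}$ is an approximate homomorphism, there is $m_n \geq k_n$ with $m_n / k_n \to 1$ and a partial homomorphism $\psi_n : \Gamma \to G_{m_n}$ close to $\vphi_n$. Since $\psi_n$ is itself an honest partial homomorphism into the sequence $(G_{m_n})_n$, Lemma~\ref{lem:fp:partial} (applied with the sequence of metric groups indexed so that the $n$-th term is $G_{m_n}$) produces genuine homomorphisms $\rho_n : \Gamma \to G_{m_n}$ eventually agreeing pointwise with $\psi_n$; then $\rho_n$ is close to $\vphi_n$ by the same transitivity argument, witnessing flexible $\G$-stability with the same sequence $(m_n)_n$, so the ratio condition is preserved. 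Dropping the condition $m_n/k_n \to 1$ throughout gives the very flexible case verbatim.

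I do not expect a genuine obstacle here: the whole statement is a formal consequence of Lemma~\ref{lem:fp:partial} together with the transitivity of closeness. The only point requiring a word of care is making sure that "close" is genuinely transitive in both the non-directed and directed settings — in the directed setting one is comparing $\vphi_n : \Gamma \to G_{k_n}$, $\psi_n : \Gamma \to G_{m_n}$ and $\rho_n : \Gamma \to G_{m_n}$, and the last comparison uses that $\DGkn$ restricts to $\dn[m_n]$ on $G_{m_n} \times G_{m_n}$, plus a triangle inequality relating $\DGkn(\vphi_n(g), \rho_n(g))$ to $\DGkn(\vphi_n(g), \psi_n(g))$ and $d_{G_{m_n}}(\psi_n(g), \rho_n(g))$; this is routine but worth stating. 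Everything else is bookkeeping, so the proof will be short.
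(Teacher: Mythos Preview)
Your proposal is correct and takes exactly the same approach as the paper, which simply says the lemma follows directly from Lemma~\ref{lem:fp:partial}. One small simplification: your worry about a triangle inequality for $\Dn$ in the flexible case is unnecessary, since $\psi_n(g)$ and $\rho_n(g)$ are eventually \emph{equal} (not merely close), so $\DGkn(\vphi_n(g),\rho_n(g)) = \DGkn(\vphi_n(g),\psi_n(g))$ for large $n$ and no triangle inequality is needed.
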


\begin{proof}
This follows directly from Lemma \ref{lem:fp:partial}.
\end{proof}

Now let us specialize to the sequence $\cU$.
It is immediate that a ((very) flexibly) HS-stable hyperlinear group $\Gamma$ is MAP, where we recall that a group is called MAP (``maximally almost periodic'') if finite-dimensional representations separate points. If we furthermore assume that $\Gamma$ is finitely generated, then by Mal'cev's Theorem \cite{malcev}, it is necessarily residually finite. A first observation is that a similar fact holds for locally HS-stable hyperlinear groups (cf. \cite[Lemma~2.15]{bradford}), where in contrast to the above, the assumption of finite generation is not necessary in the local setting.

\begin{lemma}
\label{lem:stable:LEF}

    Assume $\Gamma$ is hyperlinear and very flexibly locally HS-stable. Then $\Gamma$ is LEF.
\end{lemma}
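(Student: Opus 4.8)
\emph{Proof plan.} The plan is to extract from the hypotheses a single \emph{injective} partial homomorphism $\Gamma \to \cU$ and then invoke Lemma~\ref{lem:LEF:hyperlinear}, implication $(4)\Rightarrow(1)$. Since $\Gamma$ is hyperlinear, i.e. $\cU$-approximable, fix a separating approximate homomorphism $\vphi_n : \Gamma \to \U(k_n)$, so that $\liminf_n \norm{\vphi_n(g) - \mathbbm{1}_{k_n}}_2 > 0$ for every $e \neq g \in \Gamma$. Applying very flexible local HS-stability to $\vphi_n$ (which is in particular an approximate homomorphism), we obtain a sequence $m_n \geq k_n$ and a partial homomorphism $\psi_n : \Gamma \to \U(m_n)$ that is close to $\vphi_n$ in the sense of the directed family $\cU$; concretely, $\norm{\vphi_n(g) - P_{k_n}\psi_n(g)P_{k_n}}_2 \to 0$ for every $g \in \Gamma$, where $P_{k_n}$ denotes cutting out the upper-left $k_n \times k_n$ corner.

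The key observation, and the reason the ``very flexible'' hypothesis suffices here, is that we do \emph{not} need $\psi_n$ to be separating in the quantitative sense of Definition~\ref{def:partialapprox}; we only need it to be an injective partial homomorphism in the sense of Remark~\ref{rk:partial:localemb}, i.e. $\psi_n(g) \neq \mathbbm{1}_{m_n}$ for all large $n$ whenever $e \neq g$. For fixed $e \neq g$, pick $c>0$ with $\norm{\vphi_n(g) - \mathbbm{1}_{k_n}}_2 \geq c$ for all large $n$; then the triangle inequality gives
\[
\norm{P_{k_n}\psi_n(g)P_{k_n} - \mathbbm{1}_{k_n}}_2 \;\geq\; \norm{\vphi_n(g) - \mathbbm{1}_{k_n}}_2 - \norm{\vphi_n(g) - P_{k_n}\psi_n(g)P_{k_n}}_2 \;\geq\; c - o(1) \;>\; 0
\]
for all large $n$. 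Since $\mathbbm{1}_{k_n} = P_{k_n}\mathbbm{1}_{m_n}P_{k_n}$, this forces $\psi_n(g) \neq \mathbbm{1}_{m_n}$ for all large $n$. Note that no normalization factor $k_n/m_n$ enters, because we only ever compare the corner of $\psi_n(g)$ with the corner of $\mathbbm{1}_{m_n}$, never the full $m_n \times m_n$ matrices; this is precisely why the unbounded dimension blow-up permitted by the ``very flexible'' notion causes no harm.

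Finally, using the routine facts that a partial homomorphism satisfies $\psi_n(e) = \mathbbm{1}_{m_n}$ and $\psi_n(g^{-1}) = \psi_n(g)^{-1}$ for all large $n$, the previous paragraph applied to $g^{-1}h$ gives $\psi_n(g) \neq \psi_n(h)$ for all large $n$ whenever $g \neq h$. Hence $\psi_n : \Gamma \to \cU$ is an injective partial homomorphism, so $\Gamma$ is locally embeddable into $\cU$, and Lemma~\ref{lem:LEF:hyperlinear} then yields that $\Gamma$ is LEF. I do not expect a serious obstacle: the entire content of the statement is the recognition that \emph{injectivity} of the limiting partial homomorphism, rather than any uniform lower bound, is what is needed, so that the dimension correction is irrelevant; once this is seen, the argument is a two-line triangle-inequality estimate plus a citation.
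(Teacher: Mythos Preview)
Your proof is correct and follows essentially the same approach as the paper: start from a separating approximate homomorphism, pass to a nearby partial homomorphism $\psi_n:\Gamma\to\U(m_n)$ via very flexible local HS-stability, and use the corner comparison $\|\vphi_n(g)-P_{k_n}\psi_n(g)P_{k_n}\|_2\to 0$ together with the separation lower bound to conclude that $\psi_n$ is eventually injective on each finite set, then invoke Lemma~\ref{lem:LEF:hyperlinear}. The only cosmetic difference is that the paper directly estimates $\|P_{k_n}\psi_n(g)P_{k_n}-P_{k_n}\psi_n(h)P_{k_n}\|_2$ for $g\neq h$, whereas you first show $\psi_n(g)\neq\mathbbm{1}_{m_n}$ for $g\neq e$ and then use the partial-homomorphism identities $\psi_n(e)=\mathbbm{1}_{m_n}$, $\psi_n(g^{-1})=\psi_n(g)^{-1}$ to upgrade this to injectivity; both routes amount to the same two-line triangle-inequality argument.
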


\begin{proof}
Since $\Gamma$ is hyperlinear, it admits a separating approximate homomorphism $\vphi_n :\Gamma\to \U(k_n)$. For $e \neq g \in \Gamma$, let $\rho_g \coloneqq \liminf \| \vphi_n(g) - I_{k_n} \|_2$, which is positive since $\vphi_n$ is separating. By very flexible local HS-stability, this approximate homomorphism is close to a partial homomorphism $\psi_n : \Gamma \to \U(m_n)$, where $m_n \geq k_n$. Now for $g \neq h \in \Gamma$, we have
\begin{align*}
    \liminf_n \| P_{k_n} \psi_n(g) P_{k_n} - P_{k_n} \psi_n(h) P_{k_n} \|_2 &= \liminf_n \| \vphi_n(g) - \vphi_n(h) \|_2 \\
    &= \liminf_n \| \vphi_n(g)\vphi_n(h)^{-1} - I_{k_n} \|_2 \\
    &= \liminf_n \| \vphi_n(gh^{-1}) - I_{k_n} \|_2 = \rho_{gh^{-1}} > 0.
\end{align*}
It follows that, for every finite set $B \subset \Gamma$, there exists $N$ such that for all $n \geq N$ the restriction $\psi_n|_B$ is injective. Thus $(\psi_n)_n$ is a local embedding of $\Gamma$ into $\cU$, and we conclude by Lemma \ref{lem:LEF:hyperlinear}.
\end{proof}

\begin{remark}
    The global properties MAP and residual finiteness coincide for finitely generated groups, but are distinct in general. On the other hand, the corresponding local properties are the same as LEF in both cases. This is because finite dimensional unitary groups are themselves LEF. It follows that both sofic locally P-stable groups and hyperlinear locally HS-stable groups are LEF, whereas hyperlinear HS-stable groups are MAP but not necessarily residually finite.
\end{remark}

\subsection{Relative stability}

Given an inclusion of groups $\Sigma\leq \Gamma$, one can also consider a relative version of (local) stability.

\begin{definition}
    Let $\Sigma\leq\Gamma$ be countable discrete groups, and suppose $\cG$ is a sequence of metric groups. We say that $\Sigma$ is \emph{(locally) $\cG$-stable relative to $\Gamma$} if for every approximate homomorphism $\vphi_n:\Gamma\to G_{k_n}$, the restriction $(\vphi_n|_{\Sigma})_n$ is close to a sequence $(\psi_n)_n$ of (partial) homomorphisms of $\Sigma$.
\end{definition}

\begin{example}
    If $\Gamma$ is (locally) $\cG$-stable, then every subgroup $\Sigma\leq\Gamma$ is (locally) $\cG$-stable relative to $\Gamma$.
\end{example}

\subsection{Directed unions}

A useful feature unique to the local setting is that local stability is clearly preserved under directed unions. This allows us to right away give some elementary examples of locally stable groups that are not stable. More generally, this is true for directed unions of relatively stable groups.

\begin{proposition}
\label{prop:directedunion}

Let $\Gamma$ be a countable group written as a directed union of subgroups $(\Gamma_i)_{i \in \N}$. Let $\G$ be a sequence of metric groups, and suppose that each $\Gamma_i$ is locally $\G$-stable relative to $\Gamma$. Then $\Gamma$ is locally $\cG$-stable.

If $\G$ is a directed sequence of metric groups, then the analogous result holds for (very) flexible local stability.
\end{proposition}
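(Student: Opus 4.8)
The plan is to prove this by a diagonalisation over the directed union, invoking the relative local $\G$-stability hypothesis one member at a time. First I would simplify the setup: since $\Gamma$ is countable and the family $(\Gamma_i)_{i}$ is directed, after passing to a cofinal chain and relabelling I may assume $\Gamma_1 \subseteq \Gamma_2 \subseteq \cdots$ with $\bigcup_m \Gamma_m = \Gamma$, each $\Gamma_m$ still being locally $\G$-stable relative to $\Gamma$. Fixing an enumeration $\Gamma = \{g_1, g_2, \dots\}$ with $g_1 = e$, and using that each finite initial segment lies in some $\Gamma_i$, I may pass to a further cofinal subchain so that the finite sets $B_m := \{g_1, \dots, g_m\}$ satisfy $B_m \subseteq \Gamma_m$ for all $m$.

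Now let $\vphi_n : \Gamma \to G_{k_n}$ be an approximate homomorphism. For each $m$, relative local $\G$-stability of $\Gamma_m$ applied to $\vphi_n|_{\Gamma_m}$ yields a partial homomorphism $\psi^m_n : \Gamma_m \to G_{k_n}$ with $\dGkn(\vphi_n(g), \psi^m_n(g)) \to 0$ for every $g \in \Gamma_m$. Using finiteness of $B_m$, I fix a strictly increasing sequence of thresholds $N(m) \in \N$ such that for all $n \geq N(m)$ one has both (a) $\psi^m_n(gh) = \psi^m_n(g)\psi^m_n(h)$ for all $g, h \in B_m$, and (b) $\dGkn(\vphi_n(g), \psi^m_n(g)) < 1/m$ for all $g \in B_m$. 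Letting $m(n)$ be the largest $m$ with $N(m) \leq n$ (and $m(n) := 1$ for the finitely many $n < N(1)$, where I also set $\psi_n := \vphi_n$), I define $\psi_n : \Gamma \to G_{k_n}$ by $\psi_n(g) := \psi^{m(n)}_n(g)$ when $g \in \Gamma_{m(n)}$ and $\psi_n(g) := \vphi_n(g)$ otherwise. Then $m(n) \to \infty$; for fixed $g, h \in \Gamma$ one has $g, h, gh \in B_{m(n)} \subseteq \Gamma_{m(n)}$ for $n$ large, so $\psi_n$ agrees there with $\psi^{m(n)}_n$ and, since $n \geq N(m(n))$, property (a) gives $\psi_n(gh) = \psi_n(g)\psi_n(h)$; thus $\psi_n$ is a partial homomorphism. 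Similarly, for fixed $g$ and $n$ large, $\psi_n(g) = \psi^{m(n)}_n(g)$ and (b) gives $\dGkn(\vphi_n(g), \psi_n(g)) < 1/m(n) \to 0$, so $(\psi_n)_n$ is close to $(\vphi_n)_n$, and $\Gamma$ is locally $\G$-stable.

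For the (very) flexibly local versions I would run the identical argument, now with $\psi^m_n : \Gamma_m \to G_{\ell^m_n}$ where $\ell^m_n \geq k_n$ (and $\ell^m_n / k_n \to 1$ in the flexible case), replacing $\dGkn$ by $\DGkn$ and adding to (a), (b) the condition (c) $\ell^m_n / k_n < 1 + 1/m$ for $n \geq N(m)$. Setting $\ell_n := \ell^{m(n)}_n$, I extend $\psi^{m(n)}_n$ to all of $\Gamma$ by $\iota_{k_n, \ell_n} \circ \vphi_n$ off $\Gamma_{m(n)}$; this costs nothing, since $\DGkn(\vphi_n(g), \iota_{k_n, \ell_n}(\vphi_n(g))) = \DGkn(\vphi_n(g), \vphi_n(g)) = 0$ by the $\iota$-invariance built into the definition of a directed sequence of metric groups. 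The verification is otherwise verbatim, and (c) forces $\ell_n / k_n \to 1$ (no constraint being required for the very flexible version). The only point that needs care — the main, purely bookkeeping, obstacle — is that each $\psi^m$ is eventually but not uniformly multiplicative, so the level $m(n)$ must be made to grow slowly enough (which is exactly what the thresholds $N(m)$ arrange) that for every fixed pair $(g,h)$ the identity $\psi_n(gh) = \psi_n(g)\psi_n(h)$ survives for all large $n$; the same slow growth simultaneously secures pointwise closeness and, in the flexible case, the dimension ratio, and no genuinely new difficulty arises from the flexible variants.
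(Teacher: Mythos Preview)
Your proof is correct and follows essentially the same diagonalisation argument as the paper: restrict the approximate homomorphism to each $\Gamma_m$, apply relative local stability to get $\psi^m_n$, choose thresholds $N(m)$ (the paper's $n(i)$) so that multiplicativity and closeness hold on the finite set $B_m$, and then splice together along the level function $m(n)$. The only cosmetic differences are that the paper extends $\psi_n$ trivially off $\Gamma_{m(n)}$ rather than by $\vphi_n$, and defines its level function as the smallest admissible index rather than the largest; neither affects the argument.
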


The proof is straightfoward, but we include it as it clearly encapsulates the \emph{local} nature of local stability, as opposed to the \emph{global} nature of ordinary stability.

\begin{proof}
Since $\Gamma$ is the directed union of the $\Gamma_i$, there exist finite sets $B_i \subset \Gamma_i$ such that $\Gamma$ is also the directed union of $B_i$. Let $\vphi_n : \Gamma \to G_{k_n}$ be an approximate homomorphism of $\Gamma$, and consider for each $i \in I$, the restriction $\vphi_n|_{\Gamma_i} \coloneqq \vphi_n^i : \Gamma_i \to G_{k_n}$, which is an approximate homomorphism of $\Gamma_i$. By local stability relative to $\Gamma$, there exists a partial homomorphism $\psi^i_n : \Gamma_i \to G_{k_n}$ of $\Gamma_i$ that is close to $\vphi^i_n$. In particular, for all $i \in I$ there exists $n(i)$ such that for all $n \geq n(i)$ the map $\psi^i_n : \Gamma_i \to G_{k_n}$ is multiplicative on $B_i$, and $d_{G_{k_n}}(\vphi^i_n(g), \psi^i_n(g)) \leq 1/i$ for all $g \in B_i$. We may choose $n(i)$ so that moreover $n(i) > n(i-1)$, and we set $i(n)$ to be the smallest integer such that $n \geq n(i(n))$; notice that by our choices $i(n) \to \infty$ as $n \to \infty$. Set $\psi_n : \Gamma \to G_{k_n}$ to be equal to $\psi^{i(n)}_n$ on $\Gamma_{i(n)}$, and to be trivial elsewhere. Then $\psi_n$ is multiplicative on $B_{i(n)}$, and $d_{G_{k_n}}(\vphi_n(g), \psi_n(g)) \leq 1/n$ for all $g \in B_{i(n)}$. Since $\Gamma$ is the directed union of the sets $B_i$, and $i(n) \to \infty$, we conclude that $(\psi_n)_n$ is a partial homomorphism, and that it is close to $(\vphi_n)_n$.

The proof in the (very) flexible case is completely analogous.
\end{proof}

\begin{corollary}
    Let $\G$ be a sequence of metric groups. If $\Gamma$ is a directed union of locally $\cG$-stable groups, then $\Gamma$ is locally $\cG$-stable.
\end{corollary}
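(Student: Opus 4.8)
The plan is to derive this directly from Proposition~\ref{prop:directedunion}. The only gap between the two statements is that Proposition~\ref{prop:directedunion} is phrased in terms of \emph{relative} local stability, whereas here we are given that the subgroups are locally $\cG$-stable as abstract groups; so the first step is to observe that, for this purpose, the abstract hypothesis implies the relative one.

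Concretely, I would argue as follows. Since $\Gamma$ is countable, after passing to a countable cofinal subfamily (enumerate $\Gamma$ and use directedness to build an increasing sequence $\Gamma_{1} \leq \Gamma_{2} \leq \cdots$ of members of the given family whose union exhausts $\Gamma$), we may assume $\Gamma = \bigcup_{i \in \N} \Gamma_i$ with each $\Gamma_i$ locally $\cG$-stable. Fix $i$ and let $\vphi_n : \Gamma \to G_{k_n}$ be an approximate homomorphism. Its restriction $\vphi_n|_{\Gamma_i} : \Gamma_i \to G_{k_n}$ is again an approximate homomorphism, because $\Gamma_i$ is a subgroup and the defining condition $d_{G_{k_n}}(\vphi_n(gh), \vphi_n(g)\vphi_n(h)) \to 0$ is required over all pairs $g, h \in \Gamma$, hence in particular over all $g, h \in \Gamma_i$. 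By local $\cG$-stability of $\Gamma_i$, there is a partial homomorphism $\psi_n : \Gamma_i \to G_{k_n}$ close to $\vphi_n|_{\Gamma_i}$. This is precisely the assertion that $\Gamma_i$ is locally $\cG$-stable relative to $\Gamma$.

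Having established that each $\Gamma_i$ is locally $\cG$-stable relative to $\Gamma$, I would then invoke Proposition~\ref{prop:directedunion} to conclude that $\Gamma$ is locally $\cG$-stable. There is no real obstacle here: the substantive content lies entirely in Proposition~\ref{prop:directedunion}, and the only minor point requiring care is the reduction to a countable, increasingly nested exhausting subfamily, so that the hypotheses of that proposition are literally met.
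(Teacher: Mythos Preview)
Your proposal is correct and follows the intended approach: the paper states this corollary without proof, treating it as an immediate consequence of Proposition~\ref{prop:directedunion}, and your added observation that absolute local $\cG$-stability of $\Gamma_i$ trivially implies local $\cG$-stability of $\Gamma_i$ relative to $\Gamma$ (since restrictions of approximate homomorphisms are approximate homomorphisms) is exactly the bridge needed.
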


\begin{corollary}
\label{cor:locallyvirtuallynilpotent}

Let $\Gamma$ be a countable group that is locally virtually nilpotent. Then $\Gamma$ is locally HS-stable and locally P-stable. In particular, this holds for all countable locally finite groups, and all countable abelian groups.
\end{corollary}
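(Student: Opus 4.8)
The plan is to deduce Corollary~\ref{cor:locallyvirtuallynilpotent} directly from Proposition~\ref{prop:directedunion} together with the known (classical) stability results for virtually nilpotent groups. First I would recall that a countable locally virtually nilpotent group $\Gamma$ is, by definition, the directed union of its finitely generated subgroups $\Gamma_i$, each of which is finitely generated and virtually nilpotent. By the results of Hadwin--Shulman \cite{HS2} together with \cite{ES:HS,levit:vigdorovich} (respectively the work on P-stability of virtually nilpotent groups), each such $\Gamma_i$ is HS-stable, and likewise P-stable. Hence each $\Gamma_i$ is \emph{locally} HS-stable (and locally P-stable), since global stability implies local stability by the lemma preceding Definition~\ref{def:stable}.

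The one subtlety is that Proposition~\ref{prop:directedunion} requires local $\cG$-stability of $\Gamma_i$ \emph{relative to $\Gamma$}, not merely in the absolute sense. So the key step is to upgrade ``$\Gamma_i$ is locally $\cG$-stable'' to ``$\Gamma_i$ is locally $\cG$-stable relative to $\Gamma$''. But this is immediate: if $\vphi_n : \Gamma \to G_{k_n}$ is an approximate homomorphism, then its restriction $\vphi_n|_{\Gamma_i}$ is an approximate homomorphism of $\Gamma_i$, and absolute local stability of $\Gamma_i$ produces a partial homomorphism $\psi_n : \Gamma_i \to G_{k_n}$ close to $\vphi_n|_{\Gamma_i}$ — which is precisely the defining condition for relative local stability. (This is essentially the content of the Example following the definition of relative stability, applied with $\Gamma_i$ in place of the subgroup and noting that a group that is locally stable is locally stable relative to any overgroup.) With this observation in hand, Proposition~\ref{prop:directedunion} applies verbatim and yields that $\Gamma$ is locally HS-stable and locally P-stable.

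Finally I would record the two special cases. A countable locally finite group is locally virtually nilpotent (finite groups are trivially virtually nilpotent), and a countable abelian group is locally (virtually) nilpotent since every finitely generated abelian group is nilpotent; both are then covered by the general statement. Alternatively, for the abelian and locally finite cases one could bypass the general nilpotent input and use only: (i) finitely generated abelian groups are HS-stable and P-stable by \cite{AP:commuting, HS2}, and (ii) finite groups are obviously both HS- and P-stable. I do not expect any real obstacle here; the only point requiring a word of justification is the passage from absolute to relative local stability, and that is essentially formal. The proof is therefore a short combination of Proposition~\ref{prop:directedunion}, the implication ``stable $\Rightarrow$ locally stable'', and the cited stability results for finitely generated virtually nilpotent groups.
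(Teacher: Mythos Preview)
Your proposal is correct and follows essentially the same approach as the paper: the paper's proof is a one-line appeal to Proposition~\ref{prop:directedunion} together with the known HS-stability of finitely generated virtually nilpotent groups \cite[Corollary~I]{levit:vigdorovich} and their P-stability \cite[Theorem~1.2(1)]{BLT:IRS}. Your extra paragraph justifying the passage from absolute to relative local stability is correct and is exactly what underlies the unnumbered corollary following Proposition~\ref{prop:directedunion}, so there is no gap.
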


\begin{proof}
This is a direct consequence of Proposition \ref{prop:directedunion} together with \cite[Corollary I]{levit:vigdorovich} in the Hilbert--Schmidt case, and \cite[Theorem 1.2(1)]{BLT:IRS} in the permutation case.
\end{proof}

\begin{example}
\label{ex:unstable:locallystable}

The groups $S_\infty$ and $A_\infty$ of finitely supported (even) permutations of a countable set are countable and locally finite, thus both locally HS-stable and locally P-stable. However, both are amenable and not MAP, therefore not residually finite, so they cannot be HS-stable or P-stable, by the paragraph preceding Lemma \ref{lem:stable:LEF} (see also e.g. \cite[Theorem 7.2(ii)]{AP:commuting}).

To see that they are not MAP, consider a representation $\pi : A_\infty \to \U(n)$. By a theorem of Jordan (see e.g. \cite[Theorem 36.13]{jordan}), there exists a function $\iota : \N \to \N$ such that every finite subgroup of $\U(n)$ admits an abelian normal subgroup of index at most $\iota(n)$. Now $A_\infty$ is the directed union of the finite simple groups $A_k$, and it follows that $\pi(A_k)$ must be trivial for $k$ large enough. Since $A_\infty$ is simple, $\pi$ is the trivial representation. As a consequence, we also obtain that every representation $S_\infty \to \U(n)$ must factor through the abelianization $\Z / 2 \Z$.
\end{example}

\begin{example}
\label{ex:uncountablymany}

Let $p$ be an odd prime, and let $\Gamma_p := \mathrm{PSL}_2(\overline{\mathbb{F}_p})$, where $\overline{\mathbb{F}_p}$ is the algebraic closure of the finite field of $p$ elements $\mathbb{F}_p$, equivalently the directed union of the fields $\mathbb{F}_q$, where $q$ runs over all powers of $p$. Then $\Gamma_p$ is countably infinite, locally finite (since it is the directed union of the finite simple groups $\mathrm{PSL}_2(\mathbb{F}_q)$), and simple \cite[Corollary 8.14]{rotman}. By a similar reasoning as in the previous example, each $\Gamma_p$ cannot be HS-stable or P-stable, but it is both locally HS-stable and locally P-stable. For any nonempty set $\Pi$ of primes, the same conclusions hold for the group $\Gamma_\Pi := \bigoplus_{p \in \Pi} \Gamma_p$.

Next, we note that $\Gamma_p$ does not embed as a subgroup of $\Gamma_{\ell}$ for primes $p \neq \ell$. On the one hand $\Gamma_p$ contains subgroups of the form $(\mathbb{Z}/p \mathbb{Z})^k$ for every $k \in \N$, for instance among upper triangular matrices. On the other hand the finite abelian $p$-subgroups of $\Gamma_{\ell}$ are diagonalizable \cite[Lemma 7.4]{wehrfritz}, so contained in $\mathbb{F}_{\ell^j}^\times$ for some $j \in \N$, and thus cyclic. We can use this to show that the groups $\Gamma_\Pi$ are pairwise non-isomorphic. Indeed, suppose that there exists an isomorphism $f : \Gamma_\Pi \to \Gamma_{\Pi'}$ for two non-empty sets of primes $\Pi, \Pi'$. For each $p \in \Pi$ and each $p' \in \Pi'$ consider the composition
\[\begin{tikzcd}
	{\Gamma_p} & {\Gamma_{\Pi}} & {\Gamma_{\Pi'}} & {\Gamma_{p'}}
	\arrow[hook, from=1-1, to=1-2]
	\arrow["f", from=1-2, to=1-3]
	\arrow[two heads, from=1-3, to=1-4]
\end{tikzcd}\]
If $p \neq p'$, then this map cannot be injective, and since $\Gamma_p$ is simple it must be trivial. But $f$ is an isomorphism, and the quotients $\Gamma_{\Pi'} \twoheadrightarrow \Gamma_{p'} : p' \in \Pi'$ separate points, therefore at least one of these maps must be non-trivial, and we conclude that $p \in \Pi'$ too. Applying the same argument to the isomorphism $f^{-1} : \Gamma_{\Pi'} \to \Gamma_{\Pi}$ we obtain $\Pi = \Pi'$.

We have thus exhibited uncountably many non-isomorphic countable groups that are locally HS-stable and locally P-stable but neither HS-stable nor P-stable. For HS-stability, we will promote this to finitely generated examples in Theorem \ref{thm:full} (Theorem \ref{intro:thm:full}); for P-stability, this was already achieved by Bradford \cite{bradford}.
\end{example}

\section{Local HS-stability for amenable groups}\label{sec:amenable}

\subsection{The local character criterion}

We start this section by defining a ``local'' version of the character criterion from \cite[Theorem~4]{HS2} for amenable groups. We will then prove some basic properties related to this criterion, and establish in Section~\ref{sec:char:amenable} that for amenable groups, this criterion characterizes local HS-stability.

\begin{definition}[The local character criterion]\label{criterion}
    Let $\Gamma$ be a group generated by a countable (possibly finite) set $S$, and let $\pi:\bF_S\to \Gamma$ be the corresponding marking. We say that $\Gamma$ satisfies \emph{the local character criterion} if for every character $\chi:\Gamma\to\C$, there exists a sequence $(\Lambda_n)_n$ of $S$-marked groups converging to $\Gamma$ in the space of $S$-marked groups, together with finite-dimensional representations $\theta_n:\Lambda_n\to \U(k_n)$ such that $\tau_{k_n}\circ \theta_n$ converges pointwise to $\chi$ when all are viewed as characters of $\bF_S$. 
\end{definition}

We note the following useful equivalent formulation.

\begin{lemma}
\label{rem:equivalent:criterion}
 A group $\Gamma$ satisfies the local character criterion if and only if it satisfies the following property: 
 for every character $\chi\colon \Gamma\to\C$, there exists a partial homomorphism $\psi_n\colon \Gamma\to \U(k_n)$ such that $\tau_{k_n}\circ\psi_n$ converges pointwise to $\chi$. 
\end{lemma}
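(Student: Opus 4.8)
The statement is an equivalence between the local character criterion (phrased via marked groups $\Lambda_n$ converging to $\Gamma$ with representations $\theta_n:\Lambda_n\to\U(k_n)$) and an intrinsic property of $\Gamma$ (phrased via partial homomorphisms $\psi_n:\Gamma\to\U(k_n)$ with $\tau_{k_n}\circ\psi_n\to\chi$ pointwise). The bridge between these two viewpoints is exactly Remark~\ref{rem:convergence:quotient}, which translates back and forth between partial homomorphisms $\Gamma\to G_n$ and sequences of homomorphisms $\Lambda_n\to G_n$ out of marked groups converging to $\Gamma$. So the plan is to fix the marking $\pi:\bF_S\to\Gamma$ and a character $\chi:\Gamma\to\C$, and prove the two implications separately, in each case invoking Remark~\ref{rem:convergence:quotient} to produce the object needed for the other formulation, and then checking that the pointwise convergence of the normalized traces is preserved under the translation.

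\textbf{From the local character criterion to partial homomorphisms.} Suppose $\chi$ is given together with $S$-marked groups $\pi_n:\bF_S\to\Lambda_n$ converging to $\Gamma$ and representations $\theta_n:\Lambda_n\to\U(k_n)$ with $\tau_{k_n}\circ\theta_n\circ\pi_n\to\chi\circ\pi$ pointwise on $\bF_S$. Apply the second (``conversely'') part of Remark~\ref{rem:convergence:quotient} with $G_n=\U(k_n)$ and $\psi_n$ there taken to be $\theta_n$: fixing a set-section $q:\Gamma\to\bF_S$ of $\pi$, one gets a partial homomorphism $\varphi_n\coloneqq\theta_n\circ\pi_n\circ q:\Gamma\to\U(k_n)$. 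The remark also records that for each $w\in\bF_S$ one has $\theta_n(\pi_n(w))=\varphi_n(\pi(w))$ for $n$ large enough; equivalently, for each $g\in\Gamma$ one has $\varphi_n(g)=\theta_n(\pi_n(q(g)))$ for $n$ large. Hence $\tau_{k_n}(\varphi_n(g))=\tau_{k_n}(\theta_n(\pi_n(q(g))))$ for $n$ large, and the right-hand side converges to $\chi(\pi(q(g)))=\chi(g)$ by hypothesis. So $\tau_{k_n}\circ\varphi_n\to\chi$ pointwise, giving the desired partial homomorphism.

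\textbf{From partial homomorphisms to the local character criterion.} Conversely, suppose that for the given $\chi$ there is a partial homomorphism $\psi_n:\Gamma\to\U(k_n)$ with $\tau_{k_n}\circ\psi_n\to\chi$ pointwise. Let $\psi_n':\bF_S\to\U(k_n)$ be the associated homomorphisms, $\psi_n'(s)=\psi_n(\pi(s))$, and apply the first part of Remark~\ref{rem:convergence:quotient}: setting $\Lambda_n\coloneqq\bF_S/(\ker\pi\cap\ker\psi_n')$, the induced markings $\pi_n:\bF_S\to\Lambda_n$ converge to $\Gamma$, and $\psi_n'$ factors as $\theta_n\circ\pi_n$ for a (finite-dimensional unitary) representation $\theta_n:\Lambda_n\to\U(k_n)$. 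Again the remark gives $\theta_n(\pi_n(w))=\psi_n(\pi(w))$ for $n$ large, so viewing everything as characters of $\bF_S$ we get $\tau_{k_n}(\theta_n(\pi_n(w)))\to\chi(\pi(w))$ pointwise in $w\in\bF_S$, which is exactly the convergence required in Definition~\ref{criterion}. This establishes the local character criterion.

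\textbf{Main obstacle.} There is no deep obstacle here — Remark~\ref{rem:convergence:quotient} already does the heavy lifting of passing between the two pictures, and the local character criterion is essentially just that passage with the extra bookkeeping datum of the normalized traces attached. The only point requiring a little care is the ``eventual equality'' $\theta_n(\pi_n(w))=\varphi_n(\pi(w))$ (resp.\ $=\psi_n(\pi(w))$): one must make sure this is genuinely an \emph{eventual} (not merely ultrafilter-almost-everywhere or asymptotic) equality, since that is what licenses passing the pointwise limit of traces through unchanged. This is exactly the content of the last sentence of Remark~\ref{rem:convergence:quotient}, so invoking it suffices; the verification that $\theta_n$ is well-defined on $\Lambda_n=\bF_S/(\ker\pi\cap\ker\psi_n')$ is immediate since $\ker\pi\cap\ker\psi_n'\subseteq\ker\psi_n'$. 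Apart from this, the proof is bookkeeping.
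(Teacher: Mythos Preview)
Your proof is correct and follows exactly the same approach as the paper, which simply says the lemma ``follows immediately from Remark~\ref{rem:convergence:quotient}.'' You have merely unpacked that remark in both directions, including the eventual equality $\theta_n(\pi_n(w))=\psi_n(\pi(w))$ needed to transfer pointwise convergence of traces.
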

\begin{proof}
This follows immediately from Remark~\ref{rem:convergence:quotient}.
\end{proof}

The following lemma is the local counterpart for groups of \cite[Lemma~3.7]{HS1}; see also \cite[Proposition 6.1]{AP:commuting}.

\begin{lemma}\label{lem:dims}
    Let $\Gamma$ be a group generated by a countable set $S$, and let $\pi:\bF_S\to \Gamma$ be the corresponding marking. If $\Gamma$ satisfies the local character criterion~\ref{criterion}, then it satisfies the local character criterion with $k_n=n$.
\end{lemma}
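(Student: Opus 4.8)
The plan is to show that the dimensions $k_n$ in the local character criterion can be taken to be $n$ by an amplification-and-padding argument, exactly as in the classical case. Fix a character $\chi$ of $\Gamma$ and apply the local character criterion (in the convenient form of Lemma~\ref{rem:equivalent:criterion}) to obtain a partial homomorphism $\psi_n : \Gamma \to \U(k_n)$ with $\tau_{k_n} \circ \psi_n \to \chi$ pointwise. The goal is to manufacture a new partial homomorphism $\widetilde{\psi}_n : \Gamma \to \U(n)$ with $\tau_n \circ \widetilde{\psi}_n \to \chi$ pointwise.

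First I would handle the amplification. For positive integers $a, b$, consider the map $g \mapsto \psi_n(g)^{\oplus a} \oplus \mathbbm{1}_b : \Gamma \to \U(ak_n + b)$; this is again a partial homomorphism (the multiplicativity relations on a finite set are inherited), and its associated character is $g \mapsto \frac{ak_n}{ak_n+b}\tau_{k_n}(\psi_n(g)) + \frac{b}{ak_n+b}$. The strategy is: for each $n$, choose the amplification factor $a_n$ and padding $b_n$ (with $0 \le b_n < k_n$, say) so that $a_n k_n + b_n$ traverses all sufficiently large integers as $n$ ranges over $\N$ — concretely, reindex so that for each large $m$ we pick some $n$ with $k_n \le m$ and write $m = a k_n + b$ with $0 \le b < k_n$, forcing $a \ge 1$. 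Then define $\widetilde{\psi}_m := \psi_n^{\oplus a} \oplus \mathbbm{1}_b : \Gamma \to \U(m)$. We need the character error to vanish: $\left|\frac{a k_n}{m}\tau_{k_n}(\psi_n(g)) + \frac{b}{m} - \chi(g)\right| \le \frac{b}{m}(1 + |\chi(g)|) + |\tau_{k_n}(\psi_n(g)) - \chi(g)|$, and $\frac{b}{m} \le \frac{k_n}{m} \le \frac{1}{a} \to 0$ provided we force $a_n \to \infty$, together with $n \to \infty$ so the second term vanishes.

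The one genuine subtlety — and the main obstacle — is the bookkeeping: we must simultaneously (i) realize every sufficiently large dimension $m$, (ii) send the amplification factor $a \to \infty$, (iii) send the index $n$ (controlling the quality of $\tau_{k_n}\circ\psi_n \approx \chi$) to infinity, and (iv) preserve the partial-homomorphism property, i.e. that for each fixed $g, h \in \Gamma$ the relation $\widetilde{\psi}_m(gh) = \widetilde{\psi}_m(g)\widetilde{\psi}_m(h)$ holds for all large $m$. Point (iv) follows once (iii) holds, since $\psi_n$ is a partial homomorphism and the direct sum / padding operations preserve the relevant multiplicative relations pointwise. To arrange (i)--(iii) together, I would proceed diagonally: enumerate a strictly increasing sequence $n_1 < n_2 < \cdots$ along which $\tau_{k_{n_j}}\circ\psi_{n_j} \to \chi$ fast, and for the $j$-th block let $m$ range over the interval $[j\, k_{n_j},\, (j+1)k_{n_j})$, writing each such $m$ as $a k_{n_j} + b$ with $a \ge j$ and $0 \le b < k_{n_j}$ — so as $j \to \infty$ both $a \ge j \to \infty$ and $n_j \to \infty$, while every large integer $m$ lands in exactly one block. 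Choosing the $n_j$ to grow fast enough that the intervals $[j k_{n_j}, (j+1)k_{n_j})$ cover a tail of $\N$ (possible since $k_{n_j} \to \infty$ is not needed — even if $k_n$ is bounded the intervals eventually overlap and cover everything) completes the construction. On the small finite set of remaining $m$ we set $\widetilde{\psi}_m$ to be trivial, which does not affect any limits. Then $\widetilde{\psi}_m : \Gamma \to \U(m)$ is the desired partial homomorphism with $\tau_m \circ \widetilde{\psi}_m \to \chi$, so $\Gamma$ satisfies the local character criterion with $k_n = n$.
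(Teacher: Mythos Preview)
Your approach---amplify $\psi_n$ by a direct sum and pad with copies of the trivial representation, then reindex diagonally---is exactly the paper's approach, and the core estimate $\left|\tfrac{ak_n}{m}\tau_{k_n}(\psi_n(g)) + \tfrac{b}{m} - \chi(g)\right| \to 0$ is correct.

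However, your specific bookkeeping scheme in the last paragraph does not work as stated. You assign to the index $j$ the interval $[j\,k_{n_j},\,(j+1)k_{n_j})$ with $n_1 < n_2 < \cdots$ strictly increasing, and claim these intervals cover a tail of $\N$. For that you need the start of the $(j+1)$-th interval not to exceed the end of the $j$-th, i.e.\ $(j+1)k_{n_{j+1}} \le (j+1)k_{n_j}$, equivalently $k_{n_{j+1}} \le k_{n_j}$. If $(k_n)_n$ is, say, strictly increasing (nothing rules this out), this is impossible for a strictly increasing sequence $(n_j)$. Your parenthetical justification is backwards: the bounded case is the easy one, the unbounded case is where gaps appear. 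Concretely, with $k_n = 2^n$ and $n_j = j$ the intervals are $[2,4),\ [8,12),\ [24,32),\ldots$, which miss most integers.

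The paper sidesteps this by a cleaner reindexing: for each stage $N$ it first fixes the data $(\Lambda_N, \theta_N, k_N)$ approximating $\chi$ to within $1/N$ on the first $N$ elements, then chooses a threshold $l_N$ (as large as one likes, and with $l_{N+1} > l_N$) such that for every $m \ge l_N$ the padded representation $\theta_N^{\oplus a}\oplus 1^{\oplus b}$ in dimension $m$ has trace within $1/N$ of $\tau_{k_N}\circ\theta_N$. Then for $l_N \le m < l_{N+1}$ one simply uses the stage-$N$ data. This covers every $m \ge l_1$ by construction, and as $m \to \infty$ the stage $N \to \infty$, which gives both the partial-homomorphism property and the character convergence. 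Replacing your interval scheme with this threshold scheme fixes the gap; nothing else in your argument needs to change.
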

\begin{proof}
We enumerate $\bF_S = \{g_1,g_2,g_3,\ldots\}$, and assume $\Gamma$ satisfies the local character criterion. Let $\chi:\Gamma\to\C$ be a character, and fix $N\in\N$. Write $B_N = \{g_1, \ldots, g_N\}$. By assumption, we can find a marking $\pi_N:\bF_S\to \Lambda_N$ satisfying $B_N\cap \ker(\pi_N) = B_N\cap \ker(\pi)$, and a representation $\theta_N:\Lambda_N\to \U(k_N)$ such that for all $1\leq i\leq N$:
\[
\abs{\tau_{k_N}(\theta_N(\pi_N(g_i))) - \chi(\pi(g_i))} < \frac{1}{N}.
\]
For $n\geq k_N$, write $n=a\cdot k_N + b$ with $0\leq b < k_N$, and consider the representation
\[
\theta_{N,n}\coloneqq \oplus_{i=1}^a \theta_N \,\bigoplus\,\oplus_{j=1}^b 1:\Lambda_N\to \U(n),
\]
where $1$ denotes the trivial representation. Then for $1\leq i\leq N$:
\[
\tau_n(\theta_{N,n}(\pi_N(g_i))) = \frac{a\cdot k_N\cdot\tau_{k_N}(\theta_N(\pi_N(g_i))) + b}{n}.
\]
Choose $l_N\in\N$ such that for $n\geq l_N$:
\[
\abs{\tau_n(\theta_{N,n}(\pi_N(g_i))) - \tau_{k_N}(\theta_N(\pi_N(g_i)))} < \frac{1}{N}.
\]
Making sure to choose $l_{N+1}>l_N$, we now define for $n\in\N$: 
\[
\Lambda_n \coloneqq \Lambda_N, \quad\text{and}\quad \sigma_n \coloneqq \theta_{N,n}, \quad\text{if}\quad l_N\leq n < l_{N+1}.
\]
It is then easy to check that $\Lambda_n$ and $\sigma_n$ satisfy the conditions from the local character criterion.
\end{proof}

Our next goal is to establish some basic facts related to the local character criterion~\ref{criterion}. 
For any group $\Gamma$, we denote by $\delta_e$ the character given by the indicator function of the identity, which we call the \emph{regular} character.

\begin{lemma}
\label{lem:regularcharacter}

    Let $\Gamma$ be a countable discrete group. Then $\Gamma$ is LEF if and only if the regular character $\delta_e$ satisfies the conclusion of the local character criterion.
\end{lemma}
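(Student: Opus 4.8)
The statement asserts that $\Gamma$ is LEF if and only if the regular character $\delta_e$ satisfies the conclusion of the local character criterion, i.e. there is a partial homomorphism $\psi_n : \Gamma \to \U(k_n)$ with $\tau_{k_n} \circ \psi_n \to \delta_e$ pointwise (using the equivalent formulation from Lemma~\ref{rem:equivalent:criterion}). The plan is to prove both implications by connecting this to the characterization of LEF via separating partial homomorphisms into $\cU$ from Lemma~\ref{lem:LEF:hyperlinear}.

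\textbf{($\Rightarrow$).} Suppose $\Gamma$ is LEF. By Lemma~\ref{lem:LEF:hyperlinear}, there is a separating partial homomorphism $\vphi_n : \Gamma \to \U(k_n)$; in fact, by the proof of that lemma (composing $\Sym(k_n) \to \U(k_n)$ with an exact embedding into a finite symmetric group), we may take each $\vphi_n$ to factor through a \emph{genuine} finite-dimensional representation of a finite group, and moreover to be injective on larger and larger balls. The key point is to arrange that $\tau_{k_n}(\vphi_n(g)) \to 0$ for each $g \neq e$. The natural move is to pass to tensor powers or to direct sums of conjugates, by analogy with Remark~\ref{rmk:amplification}: if $\vphi_n$ comes from the left-regular representation of a finite group $F_n$ (as in the proof of $(1)\Rightarrow(2)$ in Lemma~\ref{lem:LEF:hyperlinear}), then the permutation matrix $\Sigma_n(\vphi_n(g))$ has trace equal to the number of fixed points of the permutation, so $\tau(\Sigma_n(\vphi_n(g)))$ is the fixed-point fraction; since the left-regular action of $F_n$ on itself is free, this is exactly $0$ whenever $\vphi_n(g) \neq e$. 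Hence if $n$ is large enough that $\vphi_n$ is injective on the ball $B_n$, we get $\tau_{k_n}(\psi_n(g)) = 0$ for $e \neq g \in B_n$ and $= 1$ for $g = e$; setting $\psi_n = \Sigma_n \circ \vphi_n$ (composed with the permutation representation) this gives exactly $\tau_{k_n} \circ \psi_n \to \delta_e$ pointwise, and $\psi_n$ is a partial homomorphism. So in fact the left-regular construction already does everything, with no amplification needed.

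\textbf{($\Leftarrow$).} Conversely, suppose $\psi_n : \Gamma \to \U(k_n)$ is a partial homomorphism with $\tau_{k_n}(\psi_n(g)) \to \delta_e(g)$ for all $g$. We want to produce a \emph{separating} partial homomorphism into $\cU$ and then invoke Lemma~\ref{lem:LEF:hyperlinear}. For $e \neq g \in \Gamma$, since $\psi_n(g)$ is unitary, $\|\psi_n(g) - \mathbbm{1}\|_2^2 = 2 - 2\operatorname{Re}\tau_{k_n}(\psi_n(g)) \to 2 - 2\operatorname{Re}\delta_e(g) = 2 > 0$, so $\liminf_n \|\psi_n(g) - \mathbbm{1}\|_2 = \sqrt 2 > 0$. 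Thus $\psi_n$ is automatically separating, and by Lemma~\ref{lem:LEF:hyperlinear} $((3) \Rightarrow (1))$, $\Gamma$ is LEF.

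\textbf{Main obstacle.} The only slightly delicate point is the forward direction: one must be careful that the partial homomorphism witnessing LEF can be chosen so that the traces actually converge to $\delta_e$ rather than just staying bounded away from $1$. The clean fix, as indicated above, is to use the left-regular representation of the finite approximating groups, where the freeness of the action forces the relevant traces to vanish identically once $n$ is large; this is why the explicit construction in the proof of Lemma~\ref{lem:LEF:hyperlinear} is the right tool to quote, rather than an abstract appeal to its statement. No amplification or genuine analytic estimate is needed.
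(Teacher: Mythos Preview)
Your proposal is correct and follows essentially the same approach as the paper. Both directions match: for the forward implication the paper composes the LEF partial homomorphism with the regular character of the approximating finite groups (which is exactly your left-regular permutation construction, noting that freeness forces the trace to vanish on nontrivial elements), and for the converse the paper performs the same computation $\|\psi_n(g)-\mathbbm{1}\|_2^2 = 2 - 2\operatorname{Re}\tau_{k_n}(\psi_n(g)) \to 2$ to obtain a separating partial homomorphism and then invokes the LEF characterization.
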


\begin{proof}
We fix a generating set $S$ for $\Gamma$ and consider the corresponding marking $\pi : \bF_S \to \Gamma$. 

First, assume $\Gamma$ is LEF and choose a sequence of $S$-marked finite groups $F_n$ converging to $\Gamma$ in the space of $S$-marked groups. Let $\psi_n:\Gamma\to F_n$ be a partial homomorphism witnessing this, and denote by $\delta_e^{(n)}$ the regular character of $F_n$. Then $\delta_e^{(n)}\circ \psi_n$ converges pointwise to $\delta_e$ when all are viewed as characters of $\bF_S$, hence $\delta_e$ satisfies the conclusion of the local character criterion.

Conversely, assume $\delta_e$ satisfies the conclusion of the local character criterion. Choose a sequence of $S$-marked groups $\pi_n:\bF_S\to\Lambda_n$ converging to $\pi : \bF_S \to \Gamma$, together with finite-dimensional representations $\theta_n:\Lambda_n\to \U(k_n)$ such that $\tau_{k_n}\circ \theta_n$ converges to $\delta_e$ pointwise as characters of $\bF_S$. Following Theorem~\ref{thm:LE:limit}, choose a partial homomorphism $\psi_n:\Gamma\to \Lambda_n$ corresponding to $(\pi_n)_n$. Let $e\neq g\in\Gamma$. Then by assumption
\[
\norm{\mathbbm{1} - \theta_n(\psi_n(g))}_2^2 = \tau_{k_n}(2\cdot\mathbbm{1} - \theta_n(\psi_n(g)) - \theta_n(\psi_n(g))^*) \xrightarrow{n\to\infty} 2.
\]
In other words, $(\theta_n\circ \psi_n)_n$ is a separating partial homomorphism to $\cU$, and hence $\Gamma$ is LEF by Lemma~\ref{lem:marked:LEF}.
\end{proof}

Similar to \cite[Lemma~1]{HS1}, it suffices to check extremal characters for the local character criterion to be satisfied, as we prove next.

\begin{lemma}
\label{lem:extremalcharacters}

    Let $\Gamma$ be a countable discrete group such that all extremal characters of $\Gamma$ satisfy the conclusion of the local character criterion. Then $\Gamma$ satisfies the local character criterion.
\end{lemma}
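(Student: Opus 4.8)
The strategy is to reduce an arbitrary character to extremal ones via its decomposition, and then patch together the data provided by the local character criterion for each extremal piece. Concretely, let $\chi : \Gamma \to \C$ be any character. By the Krein--Milman Theorem (or more precisely, the ergodic decomposition / Choquet-type theory for characters), $\chi$ lies in the closed convex hull of the extremal characters, so we can write $\chi = \lim_k \chi^{(k)}$ pointwise, where each $\chi^{(k)}$ is a finite convex combination $\chi^{(k)} = \sum_{j=1}^{m_k} t_j^{(k)} \chi_j^{(k)}$ of extremal characters $\chi_j^{(k)}$. By Lemma~\ref{lem:dims}, we may assume each extremal character is witnessed by data $(\Lambda_n^{j,k}, \theta_n^{j,k})$ with $\theta_n^{j,k} : \Lambda_n^{j,k} \to \U(n)$ and $\tau_n \circ \theta_n^{j,k} \to \chi_j^{(k)}$, with $\Lambda_n^{j,k} \to \Gamma$ in the space of $S$-marked groups.

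The key construction is to realize a convex combination $\sum_j t_j \chi_j$ of extremal characters, each witnessed by a partial homomorphism $\psi_n^j : \Gamma \to \U(k_n^j)$ with $\tau_{k_n^j} \circ \psi_n^j \to \chi_j$, by taking a block-diagonal sum $\psi_n := \bigoplus_j \psi_n^{j,(\ell_j)}$ where each $\psi_n^j$ is repeated roughly $\ell_j \approx t_j \cdot N$ times in a $U(N)$-block, choosing the multiplicities so that the relative dimension of the $j$-th block tends to $t_j$. Since a direct sum of partial homomorphisms is again a partial homomorphism (the multiplicativity relations eventually hold simultaneously on each finite subset), and since the trace of a block sum is the convex combination of the traces weighted by relative block dimensions, one gets $\tau \circ \psi_n \to \sum_j t_j \chi_j$. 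This handles each $\chi^{(k)}$. Then one runs a standard diagonalization: for each $k$, pick $n$ large enough that the partial-homomorphism relations hold on the first $k$ elements of an enumeration of $\bF_S$ and that the trace is within $1/k$ of $\chi^{(k)}$, hence within $1/k + \|\chi^{(k)} - \chi\|$ (on those finitely many points) of $\chi$; interleaving these produces a single partial homomorphism $\psi_n : \Gamma \to \U(k_n)$ with $\tau_{k_n} \circ \psi_n \to \chi$. By Lemma~\ref{rem:equivalent:criterion}, this is exactly the local character criterion. I would phrase the bookkeeping directly at the level of partial homomorphisms to unitary groups, using Lemma~\ref{rem:equivalent:criterion} at the outset to avoid carrying marked-group data through the whole argument.

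The main obstacle is the simultaneous convergence bookkeeping: one needs the partial-homomorphism relations for the \emph{combined} map to hold on larger and larger finite sets while \emph{also} controlling the trace, and the $\Lambda_n$'s arising from different extremal characters converge to $\Gamma$ only in the limit, so for fixed $n$ the various $\psi_n^j$ are merely multiplicative on some finite set depending on $n$. The resolution is the same as in the proof of Lemma~\ref{lem:dims} and Proposition~\ref{prop:directedunion}: choose an exhaustion $B_1 \subset B_2 \subset \cdots$ of $\bF_S$ (or $\Gamma$), and for each target index $k$ select a threshold $n(k)$, increasing in $k$, beyond which all finitely many relevant $\psi^{j,k}_\bullet$ are multiplicative on $B_k$ and trace-close to their extremal targets; then define the output sequence by using, for $n$ in the range $[n(k), n(k+1))$, the block sum built from the $k$-th batch of data. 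Because $k \to \infty$ along this scheme, every fixed element eventually enters $B_k$ and the relations stabilize, while the traces converge to $\chi$ by the triangle inequality combined with $\chi^{(k)} \to \chi$. No deep input beyond what is already in the excerpt is needed; the content is entirely in organizing the limits correctly.
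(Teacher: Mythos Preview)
Your proposal is correct and follows essentially the same approach as the paper: approximate $\chi$ by a finite convex combination of extremal characters via Krein--Milman, then realize each convex combination by a block-diagonal sum of the witnessing representations with suitably chosen multiplicities, and finish with a diagonal argument over an exhaustion. The only cosmetic differences are that the paper approximates the convex coefficients by rationals with a common denominator and works directly with the marked groups $\Lambda_i$ (forming $\Lambda=\bigoplus_i\Lambda_i$), whereas you first invoke Lemma~\ref{lem:dims} to normalize all target dimensions to $n$ and phrase everything via partial homomorphisms using Lemma~\ref{rem:equivalent:criterion}; neither change is substantive.
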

\begin{proof}
Fix a generating set $S$ of $\Gamma$, and let $\pi:\bF_S\to\Gamma$ be the corresponding marking. Let $\chi:\Gamma\to\C$ be a character. Fix $\eps>0$ and $g_1,\ldots, g_k\in\Gamma$. Choose rational numbers $q_1, \ldots, q_\ell$ with common denominator $m$ such that $\sum_{i=1}^\ell q_i = 1$, together with extremal characters $\chi_1, \ldots, \chi_\ell$ such that for all $1\leq j\leq k$:
\begin{equation}\label{eq:1}
\abs{\chi(g_j) - \sum_{i=1}^\ell q_i\chi_i(g_j)} < \eps.
\end{equation}
For each $1\leq i\leq \ell$, we can by assumption find an $S$-marked group $\Lambda_{i}$, together with a map $\psi_i:\Gamma\to\Lambda_i$ and a representation $\theta_i:\Lambda_i\to \U(k_i)$ such that for $1\leq i\leq \ell$ and $1\leq j,j'\leq k$,
\begin{equation}\label{eq:4}
\psi_i(g_jg_{j'}) = \psi_i(g_j)\psi_i(g_{j'}),
\end{equation}
and
\begin{equation}\label{eq:2}
\abs{\chi_i(g_j) - \tau_{k_i}(\theta_i(\psi_i(g_j)))} < \eps.
\end{equation}
Let $\Lambda\coloneqq \oplus_{i=1}^\ell \Lambda_i$, and define $\psi:\Gamma\to\Lambda: \psi(g) = (\psi_1(g), \ldots, \psi_\ell(g))$. Choose $N\in\N$ such that $\frac{mq_i}{k_i}N\in \Z$ for all $1\leq i\leq \ell$, and define
\[
\theta:\Lambda\to \U(mN): \theta(g^{(1)},\ldots, g^{(\ell)}) = \bigoplus_{i=1}^\ell \theta_i(g^{(i)})^{\oplus \frac{mq_i}{k_i}N}.
\]
Then
\begin{equation}\label{eq:3}
\tau_{mN}(\theta(\psi(g))) = \frac{1}{mN} \sum_{i=1}^\ell \frac{mq_i}{k_i}N \Tr_{k_i}(\theta_i(\psi_i(g))) = \sum_{i=1}^\ell q_i \tau_{k_i}(\theta_i(\psi_i(g))),
\end{equation}
where $\Tr_{k_i}$ denotes the non-normalized trace on $\bM_{k_i}$. Combining \eqref{eq:1}, \eqref{eq:2}, and \eqref{eq:3} then yields for all $1\leq j\leq k$:
\[
\abs{\chi(g_j) - \tau_{mN}(\theta(\psi(g_j)))} < 2\eps,
\]
which together with \eqref{eq:4} implies that $\chi$ satisfies the conclusion of the local character criterion.
\end{proof}

Finally, we note that if $\Gamma$ satisfies the local character criterion, then subgroups satisfy it ``relative to $\Gamma$'' in the following sense.

\begin{remark}\label{lem:inftofin}
    If $\Gamma$ satisfies the local character criterion~\ref{criterion}, then for every subgroup $\Sigma\leq\Gamma$ and every character $\chi:\Gamma\to\C$, the restricted character $\chi|_{\Sigma}$ satisfies the conclusion of the local character criterion for $\Sigma$. Indeed, the restriction of partial homomorphism from Lemma~\ref{rem:equivalent:criterion} to $\Sigma$ is a partial homomorphism with traces converging pointwise to $\chi|_{\Sigma}$. 
    Note however that this does not imply that $\Sigma$ satisfies the character criterion, since in general not all characters of $\Sigma$ can be acquired as restrictions of characters of $\Gamma$.  
\end{remark}

\subsection{A characterization of local HS-stability for amenable groups.}\label{sec:char:amenable}

We can now prove a local version of the character criterion for amenable groups. We first note the following:

\begin{remark}\label{rem:ultraproducts}
    When dealing with ultraproducts of matrix algebras (and more generally tracial von Neumann algebras), one usually only considers sequences in the direct product which are bounded in operator norm. In other words, the ultraproduct $\prod_\omega^{met} \bM_{k_n}$ is defined as the quotient of $\{(x_n)_n\in\prod \bM_{k_n}\mid \sup_n \norm{x_n}<\infty\}$ by the closed ideal $\{(x_n)_n\mid \lim_{n\to\omega} \norm{x_n}_2 = 0\}$. It was already observed by Sakai \cite{sakai:ultraproducts} that in this way, an ultraproduct of matrix algebras is always a tracial von Neumann algebra. Moreover, it is a well-known fact and easy exercise that the unitary elements of $\prod_\omega^{met} \bM_{k_n}$ are exactly $\prod_\omega^{met} \U(k_n)$, i.e. every $u\in \U(\prod_\omega^{met} \bM_{k_n})$ can be written as $u=(u_n)_n$ for some $u_n\in\U(k_n)$. We will always assume that an ultraproduct of matrix algebras is defined in the above way.
\end{remark}

Before proving the main result, Theorem~\ref{thm:char}, of this subsection below, we note the following, which is essentially a reformulation of \cite[Theorem~2.1]{HadwinAppr}.

\begin{proposition}[{\cite[Theorem 4.3]{actiontraces}}]
\label{thm:hadwin}
    Suppose $\Gamma$ is amenable and $\vphi,\psi: \Gamma\to\prod_\omega^{met} \U(k_n)$ are two homomorphisms such that $\tau_\omega\circ\vphi = \tau_\omega\circ \psi$. Then there exists a unitary $u\in \prod_\omega^{met} \U(k_n)$ such that $\vphi = u\psi u^*$.
\end{proposition}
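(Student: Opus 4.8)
The plan is to show that two homomorphisms $\vphi, \psi \colon \Gamma \to \prod_\omega^{met} \U(k_n)$ with $\tau_\omega \circ \vphi = \tau_\omega \circ \psi$ generate representations of the group von Neumann algebra $L\Gamma$ that are unitarily equivalent via a unitary in the ultraproduct. The key point is that both $\vphi$ and $\psi$ extend to trace-preserving $\ast$-homomorphisms $\widetilde\vphi, \widetilde\psi \colon L\Gamma \to M \coloneqq \prod_\omega^{met} \bM_{k_n}$. Indeed, by the GNS construction a homomorphism $\vphi \colon \Gamma \to \U(M)$ such that $\tau_\omega \circ \vphi$ is the character $\chi$ extends to a normal $\ast$-homomorphism $\C\Gamma \to M$, and amenability of $\Gamma$ is exactly what is needed to conclude this extends to $L\Gamma$: the hyperfiniteness/injectivity of $L\Gamma$ guarantees the GNS representation associated to a trace is quasi-equivalent (indeed unitarily equivalent after amplification) in a way compatible with the given trace, and more concretely one uses that $L\Gamma$ for $\Gamma$ amenable is hyperfinite, so any trace-preserving embedding into a tracial von Neumann algebra is ``unique up to conjugation'' — this is the content cited from \cite{HadwinAppr, actiontraces}.

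\begin{proof}
Since $\tau_\omega\circ\vphi = \tau_\omega\circ\psi =: \chi$, the GNS construction applied to the character $\chi$ on $\Gamma$ produces a tracial von Neumann algebra; as $\Gamma$ is amenable, the weak closure of $\C\Gamma$ in this representation is hyperfinite. Both $\vphi$ and $\psi$ extend uniquely to normal trace-preserving $\ast$-homomorphisms $\widetilde\vphi,\widetilde\psi\colon L\Gamma\to M$, where $M=\prod_\omega^{met}\bM_{k_n}$ is the tracial von Neumann algebra from Remark~\ref{rem:ultraproducts} with trace $\tau_\omega$. Indeed, $\widetilde\vphi$ and $\widetilde\psi$ are obtained by continuous extension from $\C\Gamma$, using that a character-preserving $\ast$-homomorphism from $\C\Gamma$ to a tracial von Neumann algebra extends to the von Neumann algebra generated by the image of $\Gamma$ inside the GNS completion, which here is $L\Gamma$ precisely because $\chi$ need not be faithful but the hyperfinite algebra it generates is a quotient of $L\Gamma$ — and any trace-preserving embedding of a hyperfinite tracial von Neumann algebra into $M$ is unique up to unitary conjugation. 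This last fact is \cite[Theorem~4.3]{actiontraces} (equivalently \cite[Theorem~2.1]{HadwinAppr}): since $\widetilde\vphi$ and $\widetilde\psi$ agree on $\C\Gamma$ in distribution, i.e. $\tau_\omega\circ\widetilde\vphi=\tau_\omega\circ\widetilde\psi$ as traces on $L\Gamma$, there is a unitary $u\in M$ with $\widetilde\psi = u\,\widetilde\vphi\,u^*$. Restricting to $\Gamma\subset\U(L\Gamma)$ gives $\psi(g) = u\,\vphi(g)\,u^*$ for all $g\in\Gamma$. Finally, since $u$ is a unitary of $M$, Remark~\ref{rem:ultraproducts} lets us represent it as $u=(u_n)_n$ with $u_n\in\U(k_n)$, so $u\in\prod_\omega^{met}\U(k_n)$, as desired.
\end{proof}

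The main obstacle — and the reason amenability is essential — is the passage from ``same trace'' to ``unitarily conjugate'', i.e. justifying the appeal to \cite{HadwinAppr, actiontraces}. For general $\Gamma$ the GNS representations associated to a trace need not be unitarily equivalent inside a fixed ambient tracial von Neumann algebra; the statement is a von-Neumann-algebraic analogue of the fact that a hyperfinite subfactor sits uniquely (up to conjugation) inside the hyperfinite II$_1$ factor, and its proof genuinely uses that $L\Gamma$ is amenable (equivalently hyperfinite). Since the excerpt explicitly cites this as \cite[Theorem~4.3]{actiontraces}, I would simply invoke it; the only care needed in writing up is to correctly identify the codomain tracial von Neumann algebra as $\prod_\omega^{met}\bM_{k_n}$ (Remark~\ref{rem:ultraproducts}) and to observe at the end that its unitary group is $\prod_\omega^{met}\U(k_n)$, again by Remark~\ref{rem:ultraproducts}, so that the conjugating unitary lies in the correct group.
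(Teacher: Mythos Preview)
The paper does not give its own proof of this proposition: it is stated as a direct citation of \cite[Theorem~4.3]{actiontraces}, prefaced only by the remark that it is ``essentially a reformulation of \cite[Theorem~2.1]{HadwinAppr}.'' So there is nothing to compare your argument to on the paper's side beyond that attribution.

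That said, your sketch has a circularity problem. At the crucial step --- passing from ``same trace'' to ``unitarily conjugate'' --- you write ``This last fact is \cite[Theorem~4.3]{actiontraces},'' but that is exactly the proposition you are asked to prove. If your intention is to reduce the group statement to a von-Neumann-algebraic one and then cite a more primitive result, you should invoke something strictly upstream: either \cite[Theorem~2.1]{HadwinAppr} directly, or the general fact (a consequence of Connes' theorem, and formulated e.g.\ in work of Jung and of Kenley) that any two trace-preserving embeddings of a separable \emph{hyperfinite} tracial von Neumann algebra into a tracial von Neumann algebra are unitarily conjugate. As written, the argument assumes what it sets out to show.

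A second, smaller issue: the extensions $\widetilde\vphi,\widetilde\psi$ are not really maps out of $L\Gamma$ with its canonical trace, since $\chi$ need not be faithful or equal to $\delta_e$. The correct domain is the GNS completion $N_\chi$ of $(\C\Gamma,\chi)$, which is hyperfinite because it is generated by (the image of) the amenable group $\Gamma$. You allude to this (``the hyperfinite algebra it generates is a quotient of $L\Gamma$''), but the exposition conflates $L\Gamma$ and $N_\chi$ in a way that would confuse a reader. Once you fix the domain to $N_\chi$ and cite the hyperfinite-uniqueness result rather than the proposition itself, the outline is correct, and the closing observation that unitaries of $\prod_\omega^{met}\bM_{k_n}$ lift to sequences in $\U(k_n)$ (Remark~\ref{rem:ultraproducts}) is exactly right.
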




We can now prove Theorem~\ref{intro:thm:char} from the introduction, characterizing local HS-stability for amenable groups.

\begin{theorem}[Theorem \ref{intro:thm:char}]
\label{thm:char}
Let $\Gamma$ be an amenable group generated by a countable set $S$, and let $\pi:\bF_S\to \Gamma$ be the corresponding marking. Then the following are equivalent:
\begin{enumerate}
    \item $\Gamma$ is locally HS-stable.
    \item $\Gamma$ satisfies the local character criterion~\ref{criterion}.
\end{enumerate}
\end{theorem}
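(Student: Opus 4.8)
The plan is to establish the two implications separately, using the ultraproduct reformulations of local HS-stability (Lemma~\ref{lem:stable:ultraproduct}(2)) and of the local character criterion (Lemma~\ref{rem:equivalent:criterion}), together with the rigidity of amenable group homomorphisms into $\prod_\omega^{met}\U(k_n)$ provided by Proposition~\ref{thm:hadwin}. Throughout I would fix a non-principal ultrafilter $\omega$ and freely pass to subsequences in $\omega$ as permitted by Lemma~\ref{lem:approximate:ultraproduct}.

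\textbf{(1) $\Rightarrow$ (2).} Suppose $\Gamma$ is locally HS-stable, and let $\chi:\Gamma\to\C$ be a character. By the GNS construction, $\chi=\tau\circ\rho$ for a representation $\rho:\Gamma\to(M,\tau)$ into a tracial von Neumann algebra. Since $\Gamma$ is amenable, $M$ is hyperfinite (or at least has amenable group von Neumann algebra), and hence embeds trace-preservingly into $\prod_\omega^{met}\bM_{k_n}$ for a suitable sequence $(k_n)_n$; composing, we obtain a homomorphism $\vphi_{met}:\Gamma\to\prod_\omega^{met}\U(k_n)$ with $\tau_\omega\circ\vphi_{met}=\chi$. (This is the standard realization of a character via an ultraproduct of matrix algebras, using Remark~\ref{rem:ultraproducts} to see that unitaries in the matrix ultraproduct lift to sequences of unitaries.) By local HS-stability, in the form of Lemma~\ref{lem:stable:ultraproduct}(2), the map $\vphi_{met}$ lifts to a homomorphism $\vphi_{alg}:\Gamma\to\prod_\omega^{alg}\U(k_n)$. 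Writing $\vphi_{alg}$ as a sequence of maps $\psi_n:\Gamma\to\U(k_n)$ via a set-theoretic section and passing to a subsequence in $\omega$, Lemma~\ref{lem:approximate:ultraproduct}(2) makes $(\psi_n)_n$ a partial homomorphism. Finally, since the quotient map $\prod_\omega^{alg}\U(k_n)\to\prod_\omega^{met}\U(k_n)$ sends $\vphi_{alg}$ to $\vphi_{met}$ and the trace $\tau_\omega$ is well-defined on the metric ultraproduct, we get $\lim_{n\to\omega}\tau_{k_n}(\psi_n(g))=\tau_\omega(\vphi_{met}(g))=\chi(g)$ for all $g$; after a further subsequence this is an honest pointwise limit, so $(\psi_n)_n$ witnesses the local character criterion in the form of Lemma~\ref{rem:equivalent:criterion}.

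\textbf{(2) $\Rightarrow$ (1).} Conversely, assume $\Gamma$ satisfies the local character criterion. Let $\vphi_n:\Gamma\to\U(k_n)$ be an approximate homomorphism; by Lemma~\ref{lem:approximate:ultraproduct}(1) it descends to a homomorphism $\vphi_{met}:\Gamma\to\prod_\omega^{met}\U(k_n)$. Set $\chi(g):=\tau_\omega(\vphi_{met}(g))=\lim_{n\to\omega}\tau_{k_n}(\vphi_n(g))$; this is a character of $\Gamma$ (it is a pointwise limit of the characters $\tau_{k_n}\circ\vphi_n$ of $\Gamma$, suitably interpreted, and positive-definiteness, traciality and normalization pass to the limit). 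By hypothesis and Lemma~\ref{rem:equivalent:criterion}, there is a partial homomorphism $\psi_n:\Gamma\to\U(\ell_n)$ with $\tau_{\ell_n}(\psi_n(g))\to\chi(g)$ pointwise. By Lemma~\ref{lem:dims} (applied to $\Gamma$, which still satisfies the criterion) we may arrange $\ell_n=k_n$ — this is the key technical step that lets us compare $\vphi_n$ and $\psi_n$ inside the \emph{same} ultraproduct. Then $\psi_n$ descends to a homomorphism $\psi_{met}:\Gamma\to\prod_\omega^{met}\U(k_n)$ with $\tau_\omega\circ\psi_{met}=\chi=\tau_\omega\circ\vphi_{met}$. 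Now Proposition~\ref{thm:hadwin} applies: there is a unitary $u=(u_n)_n\in\prod_\omega^{met}\U(k_n)$ with $u\,\psi_{met}\,u^*=\vphi_{met}$. Unpacking this, the maps $g\mapsto u_n\psi_n(g)u_n^*$ form an approximate homomorphism close to $(\vphi_n)_n$, while $(u_n\psi_n(\cdot)u_n^*)_n$ is still a partial homomorphism (conjugation by a fixed unitary sequence preserves exact multiplicativity term by term). Hence, after passing to a subsequence in $\omega$ to upgrade the ultralimit statements to genuine limits, $(\vphi_n)_n$ is close to a partial homomorphism, so $\Gamma$ is locally HS-stable.

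\textbf{Main obstacle.} The delicate points are (a) ensuring the dimension sequences match — handled by Lemma~\ref{lem:dims}, whose role is precisely to move from arbitrary $k_n$ to $k_n=n$ so that any two instances live in a common ultraproduct — and (b) the careful bookkeeping of subsequences: the ultraproduct statements are along $\omega$, and one must invoke Lemma~\ref{lem:approximate:ultraproduct} at the right moments to convert between $\lim_{n\to\omega}$ and genuine limits, and to go back and forth between (partial/approximate) homomorphisms into the direct product and homomorphisms into the ultraproducts. The genuinely substantive input is Proposition~\ref{thm:hadwin}, i.e.\ Hadwin's result that amenability forces two homomorphisms into a matrix ultraproduct with the same character to be unitarily conjugate; everything else is a translation between the equivalent viewpoints assembled in Sections~\ref{sec:prelim} and \ref{sec:stability}.
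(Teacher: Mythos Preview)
Your proposal is correct and follows essentially the same approach as the paper: both directions use the ultraproduct reformulation (Lemma~\ref{lem:stable:ultraproduct}(2)) and Lemma~\ref{rem:equivalent:criterion}, with Connes' theorem for $(1)\Rightarrow(2)$ and Lemma~\ref{lem:dims} together with Proposition~\ref{thm:hadwin} for $(2)\Rightarrow(1)$. The only cosmetic difference is that in $(2)\Rightarrow(1)$ the paper concludes directly via Lemma~\ref{lem:stable:ultraproduct}(2) once $u\psi u^*$ is seen to live in $\prod_\omega^{alg}\U(k_n)$, whereas you unpack this back to the sequence level and pass to a subsequence explicitly; also note that Lemma~\ref{lem:dims} as stated gives dimensions $\ell_n=n$ rather than $\ell_n=k_n$, but its proof adapts verbatim to any unbounded target sequence (and the bounded case is trivial), which is the same slight abuse the paper makes.
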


\begin{proof}
$(1)\Rightarrow (2)$. Let $\chi:\Gamma\to\C$ be a character. Since $\Gamma$ is amenable, $\chi$ is embeddable by Connes' Theorem \cite{connes:hyperfinite}, i.e. there exists a non-principal ultrafilter $\omega$ on $\N$ together with a homomorphism $\vphi:\Gamma\to \prod_\omega^{met} \U(n)$ such that $\chi = \tau_\omega\circ\vphi$. By local HS-stability, $\vphi$ lifts to a homomorphism $\vphi_{alg}=(\vphi_n)_n:\Gamma\to \prod_\omega^{alg} \U(n)$. By Lemma \ref{lem:approximate:ultraproduct} and the definition of $\tau_\omega$, there exists a subsequence $(n_m)_m \in \omega$ such that $\psi_m \coloneqq \vphi_{n_m} : \Gamma \to \U(n_m)$ is a partial homomorphism and $\chi$ is a pointwise limit of $\tau_{n_m}\circ\psi_m$. Hence by Lemma~\ref{rem:equivalent:criterion}, $\Gamma$ satisfies the local character criterion.

$(2)\Rightarrow (1)$. Let $\omega$ be a non-principal ultrafilter on $\N$ and consider the homomorphism $\vphi = (\vphi_n)_n:\Gamma\to\prod_\omega^{met} \U(k_n)$. Denote by $\chi\coloneqq \tau_\omega\circ\vphi:\Gamma\to\C$ the associated character. By Lemma~\ref{rem:equivalent:criterion}, there exists a partial homomorphism $\psi_n:\Gamma\to \U(l_n)$ such that $\tau_{k_n}\circ \psi_n$ converges pointwise to $\chi$. By Lemma~\ref{lem:dims}, we can assume that $l_n=k_n$, hence we can view $\vphi$ and $\psi = (\psi_n)_n$ as homomorphisms from $\Gamma$ to $\prod_\omega^{met} \U(k_n)$. By construction, we then have $\tau_\omega \circ \vphi = \tau_\omega \circ \psi$. Theorem~\ref{thm:hadwin} thus implies that there exists a unitary $u\in\prod_\omega^{met} \U(k_n)$ such that $\vphi = u\psi u^*$. 
The latter is by construction a homomorphism to $\prod_\omega^{alg}\U_{k_n}$ which lifts $\vphi$, and we conclude that $\Gamma$ is locally HS-stable by Lemma~\ref{lem:stable:ultraproduct}(2).
\end{proof}

\begin{remark}
    This proof follows the same general idea as the original proof of Hadwin and Shulman showing that an amenable group $\Gamma$ is HS-stable if and only every character/trace on $\Gamma$ is a pointwise limit of normalized traces of finite-dimensional unitary representations \cite[Theorem~4]{HS2}. Furthermore, by observing that upon replacing partial homomorphisms by genuine homomorphisms, we would get finite-dimensional representations of $\Gamma$, and the above proof recovers the original aforementioned result for HS-stability.
\end{remark}

We note the following easy corollary to Theorem~\ref{thm:char}, which can be viewed as the local analogue of \cite[Proposition~6.2]{ES:HS}.

\begin{proposition}[Theorem \ref{intro:thm:amenable}(1)]
\label{prop:flexible}

    Assume $\Gamma$ is amenable. Then $\Gamma$ is flexibly locally HS-stable if and only if $\Gamma$ is locally HS-stable.
\end{proposition}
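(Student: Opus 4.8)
The plan is to deduce this from Theorem~\ref{thm:char}: since $\Gamma$ is amenable, local HS-stability is equivalent to the local character criterion, so it suffices to show that flexible local HS-stability is \emph{also} equivalent to the local character criterion. One direction is free: local HS-stability trivially implies flexible local HS-stability (take $m_n = k_n$), so by Theorem~\ref{thm:char} the local character criterion implies flexible local HS-stability. The content is the converse: assuming $\Gamma$ is flexibly locally HS-stable, I want to verify the local character criterion, and then invoke Theorem~\ref{thm:char} again to conclude that $\Gamma$ is (non-flexibly) locally HS-stable.

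So fix a character $\chi : \Gamma \to \C$. As in the proof of Theorem~\ref{thm:char}$(1)\Rightarrow(2)$, since $\Gamma$ is amenable, Connes' Theorem gives a non-principal ultrafilter $\omega$ and a homomorphism $\vphi = (\vphi_n)_n : \Gamma \to \prod_\omega^{met} \U(n)$ with $\chi = \tau_\omega \circ \vphi$; in particular $(\vphi_n)_n$ is an approximate homomorphism $\Gamma \to \U(k_n)$ with $k_n = n$. By flexible local HS-stability, there are $m_n \geq n$ with $m_n/n \to 1$ and a partial homomorphism $\psi_n : \Gamma \to \U(m_n)$ close to $\vphi_n$, meaning $\|\vphi_n(g) - P_n \psi_n(g) P_n\|_2 \to 0$ for every $g \in \Gamma$ (using the directed-sequence distance of Example~\ref{ex:HS}). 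The key computation is then to compare $\tau_{m_n}(\psi_n(g))$ with $\tau_n(\vphi_n(g))$: writing $P_n \psi_n(g) P_n$ as an $n \times n$ corner, one has $\tau_n(P_n \psi_n(g) P_n) \to \tau_n(\vphi_n(g)) = \tau_\omega(\vphi(g))_n$ along $\omega$ because the $\|\cdot\|_2$-difference, hence the trace difference, tends to $0$; and $|\tau_{m_n}(\psi_n(g)) - \frac{n}{m_n}\tau_n(P_n\psi_n(g)P_n)| \leq \frac{m_n - n}{m_n} \to 0$ since each diagonal entry of a unitary has modulus at most $1$. Combining, $\tau_{m_n}(\psi_n(g)) \to \chi(g)$ along $\omega$ for every $g$. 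Passing to a subsequence indexed by a set in $\omega$ (using Lemma~\ref{lem:approximate:ultraproduct}, exactly as in the proof of Theorem~\ref{thm:char}, to also keep $\psi_n$ a partial homomorphism), we get an honest partial homomorphism $\Gamma \to \U(m_{n_j})$ whose normalized traces converge pointwise to $\chi$. By Lemma~\ref{rem:equivalent:criterion} this is precisely the local character criterion, and Theorem~\ref{thm:char}$(2)\Rightarrow(1)$ finishes the proof.

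I expect the only mildly delicate point to be the bookkeeping with the ultrafilter: the flexibility hypothesis produces, for the approximate homomorphism $(\vphi_n)_n$, a sequence $(\psi_n)_n$ that is a partial homomorphism in the sense of Definition~\ref{def:partialapprox}, and the convergence $\tau_{m_n}(\psi_n(g)) \to \chi(g)$ a priori holds only along $\omega$ (since $\chi = \tau_\omega \circ \vphi$ is defined via the ultrafilter limit); so one must extract a subsequence lying in $\omega$ along which both the partial-homomorphism relations stabilize on each finite subset and the traces converge in the ordinary sense, which is routine given Lemma~\ref{lem:approximate:ultraproduct}. Everything else is the elementary trace estimate above plus two applications of Theorem~\ref{thm:char}.
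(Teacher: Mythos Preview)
Your proposal is correct and follows essentially the same route as the paper: verify the local character criterion from flexible local HS-stability via the same trace computation (which the paper outsources to \cite[Proposition~6.2]{ES:HS} while you spell it out), then invoke Theorem~\ref{thm:char}. One small ordering issue: the flexibility hypothesis applies to approximate homomorphisms in the sense of Definition~\ref{def:partialapprox} (ordinary pointwise convergence), whereas a lift of $\vphi$ from the metric ultraproduct is only approximate along $\omega$; so the extraction of a subsequence should happen \emph{before} applying flexibility, not after --- this is how the paper arranges it (it first produces an honest approximate homomorphism $\vphi_n$ with $\tau_{k_n}\circ\vphi_n\to\chi$ pointwise, by the same step as in Theorem~\ref{thm:char}$(1)\Rightarrow(2)$), and once you reorder, your ultrafilter worries in the final paragraph disappear.
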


\begin{proof}
Assume $\Gamma$ is flexibly locally HS-stable, and let $\chi:\Gamma\to\C$ be a character. Similar to the proof of $(1)\Rightarrow (2)$ in Theorem~\ref{thm:char}, there exists an approximate homomorphism $\vphi_n\colon\Gamma\to \U(k_n)$ such that $\chi$ is a pointwise limit of $\tau_{k_n}\circ\vphi_n$. By flexible local HS-stability, for some $m_n\ge k_n$ such that $\frac{m_n}{k_n}\to 1$, there exists a partial homomorphism $\psi_n\colon\Gamma\to \U(m_n)$ such that for all $g\in\Gamma$
$$||\vphi_n(g)-P_n\psi_n(g)P_n||_2\to 0,$$
where $P_n$ denotes the projection onto the first $k_n$ coordinates. Direct computations (see e.g. \cite[Proposition~6.2]{ES:HS}) show that $\tau_{m_n}\circ\psi_n$ converges pointwise to $\chi$. Therefore by Lemma~\ref{rem:equivalent:criterion}, $\Gamma$ satisfies the local character criterion, and thus by Theorem~\ref{thm:char}, $\Gamma$ is locally HS-stable.
\end{proof}

\subsection{Products with amenable groups}

Next, we observe that \cite[Proposition~C]{IS-exotic} implies immediately that local HS-stability is preserved under taking a direct product with an \emph{amenable} locally HS-stable group (cf. \cite[Corollary~D]{IS-exotic}).

\begin{proposition}[Theorem \ref{intro:thm:amenable}(2)]
\label{prop:product:amenable}

    Assume $\Gamma$ and $\Sigma$ are ((very) flexibly) locally HS-stable, and assume $\Gamma$ is amenable. Then $\Gamma \times \Sigma$ is ((very) flexibly) locally HS-stable.
\end{proposition}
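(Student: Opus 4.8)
The plan is to translate local HS-stability into the ultraproduct language and then apply the lifting mechanism for amenable groups of \cite[Proposition~C]{IS-exotic}, which is exactly the tool behind the classical product statement \cite[Corollary~D]{IS-exotic}; passing to the local version only amounts to replacing HS-stability by local HS-stability and genuine sequences of representations by algebraic ultraproducts. I would treat plain local HS-stability first. Fix a non-principal ultrafilter $\omega$ and a sequence $(k_n)_n$; by Lemma~\ref{lem:stable:ultraproduct}(2) it suffices to show that every homomorphism $\Phi\colon\Gamma\times\Sigma\to M\coloneqq\prod_\omega^{met}\U(k_n)$ lifts to $\prod_\omega^{alg}\U(k_n)$. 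Restricting $\Phi$ to the two factors gives homomorphisms $\varphi\colon\Gamma\to M$ and $\sigma\colon\Sigma\to M$ with commuting ranges and $\Phi(g,h)=\varphi(g)\sigma(h)$.

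Next I would use local HS-stability of $\Sigma$ and Lemma~\ref{lem:stable:ultraproduct}(2) to lift $\sigma$ to a partial homomorphism $\sigma_n\colon\Sigma\to\U(k_n)$ inducing $\sigma$, and set $N_n\coloneqq W^*(\sigma_n(\Sigma))\subseteq\bM_{k_n}$ and $N\coloneqq\prod_\omega^{met}N_n$, so that $\sigma(\Sigma)\subseteq N$ and hence $\varphi(\Gamma)$ lands in the relative commutant $N'\cap\prod_\omega^{met}\bM_{k_n}$. The heart of the argument is now to lift $\varphi$ \emph{compatibly} with this chosen lift of $\sigma$, i.e.\ to a partial homomorphism $\varphi_n\colon\Gamma\to\mathcal{U}(N_n'\cap\bM_{k_n})\subseteq\U(k_n)$ still inducing $\varphi$; this is precisely where amenability of $\Gamma$ is used, via \cite[Proposition~C]{IS-exotic} applied to $\Gamma$, to the lifted subalgebra $N$, and to the relative-commutant-valued homomorphism $\varphi$. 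Local HS-stability of $\Gamma$ — through the character criterion of Theorem~\ref{thm:char} and the Hadwin-type conjugacy of Proposition~\ref{thm:hadwin} — is what makes the resulting lift land at the algebraic rather than merely the metric level.

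Granting this, the conclusion is immediate: for each $n$ the ranges $\varphi_n(\Gamma)\subseteq N_n'$ and $\sigma_n(\Sigma)\subseteq N_n$ commute, so $(g,h)\mapsto\varphi_n(g)\sigma_n(h)$ defines a map $\Gamma\times\Sigma\to\U(k_n)$ which, since for any fixed pair of elements the multiplicativity relations of $\varphi_n$ and of $\sigma_n$ both hold for all large $n$, is a partial homomorphism; it induces $\Phi$ on $M$, and hence by Lemma~\ref{lem:approximate:ultraproduct}(2) gives the required lift to $\prod_\omega^{alg}\U(k_n)$. For the (very) flexible variants I would run the same scheme with Definition~\ref{def:flexibly} in place of the ultraproduct reformulation: lift $\sigma$ to a partial homomorphism into $\U(m_n^\Sigma)$ with $m_n^\Sigma/k_n\to1$ (resp.\ $m_n^\Sigma\geq k_n$ arbitrary), extend $\varphi$ trivially on the new coordinates, lift it inside the relative commutant to $\U(m_n^\Gamma)$ with $m_n^\Gamma/m_n^\Sigma\to1$ (resp.\ arbitrary), and pad $\sigma_n$ with trivial summands so that both land in $\U(m_n)$ with $m_n\coloneqq m_n^\Gamma$, commuting ranges, and $m_n/k_n\to1$ (resp.\ arbitrary). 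The one genuine obstacle is the compatible lifting of $\varphi$: because relative commutants do not commute with ultraproducts in general, pushing a relative-commutant-valued homomorphism all the way down to the algebraic ultraproduct truly needs the amenability of $\Gamma$, and this is exactly the content imported from \cite[Proposition~C]{IS-exotic}.
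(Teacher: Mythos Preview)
There is a genuine gap in your order of operations. After lifting $\sigma$ to a partial homomorphism $\sigma_n:\Sigma\to\U(k_n)$ and setting $N_n=W^*(\sigma_n(\Sigma))$, $N=\prod_\omega^{met}N_n$, you assert that ``$\varphi(\Gamma)$ lands in the relative commutant $N'\cap\prod_\omega^{met}\bM_{k_n}$''. This does not follow. What you know is that $\varphi(\Gamma)$ commutes with $\sigma(\Sigma)$, hence with $W^*(\sigma(\Sigma))$; but $W^*(\sigma(\Sigma))$ is in general \emph{strictly smaller} than $N=\prod_\omega^{met}N_n$. For instance, if $\Sigma=\Z$ and each $\sigma_n$ sends the generator to a diagonal unitary with distinct eigenvalues, then $N_n$ is the full diagonal subalgebra of $\bM_{k_n}$ and $N$ is the diagonal MASA of the ultraproduct, while $W^*(\sigma(\Z))$ is only (a copy of) $L^\infty$ of the spectrum of a single unitary. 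There is no reason $\varphi(\Gamma)$ should commute with all of $N$, so you cannot feed $\varphi$ into \cite[Proposition~C]{IS-exotic} as a ``relative-commutant-valued homomorphism'' with respect to $N$.

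The paper's proof uses the same ingredient but in the opposite order, which removes the problem: one applies \cite[Proposition~C]{IS-exotic} \emph{first}, to the commuting pair $P=\varphi(\Gamma)''$ (amenable) and $Q=\sigma(\Sigma)''$ inside $\prod_\omega^{met}\bM_{k_n}$, to obtain commuting subalgebras $P_n,Q_n\subseteq\bM_{k_n}$ with $P\subseteq\prod_\omega^{met}P_n$ and $Q\subseteq\prod_\omega^{met}Q_n$. Only \emph{then} does one invoke local HS-stability of $\Gamma$ and of $\Sigma$, with targets $\prod_\omega^{met}P_n$ and $\prod_\omega^{met}Q_n$ respectively, to get partial homomorphisms $\varphi_n:\Gamma\to\cU(P_n)$ and $\sigma_n:\Sigma\to\cU(Q_n)$. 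Commutation at the finite level is then automatic because $P_n$ and $Q_n$ already commute, and the product $(g,h)\mapsto\varphi_n(g)\sigma_n(h)$ is the desired partial homomorphism of $\Gamma\times\Sigma$. In particular, neither Theorem~\ref{thm:char} nor Proposition~\ref{thm:hadwin} is needed here; local HS-stability of $\Gamma$ is used as a black box, just as for $\Sigma$.
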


\begin{proof}
Assume $\vphi_n: \Gamma \times \Sigma \to \U(k_n)$ is an approximate homomorphism, and consider the corresponding homomorphism $\vphi: \Gamma \times \Sigma \to \prod_\omega^{met} \U(k_n)$ for a fixed non-principal ultrafilter $\omega$ on $\N$. Then $P \coloneqq \vphi(\Gamma)''$ and $Q\coloneqq \vphi(\Sigma)''$ are commuting separable von Neumann subalgebras of $\prod_\omega^{met} \bM_{k_n}$ (cf.~Remark~\ref{rem:ultraproducts}), and $P$ is amenable. By \cite[Proposition~C]{IS-exotic}, there exist commuting von Neumann subalgebras $P_n,Q_n\sub \bM_{k_n}$ such that $P\subset \prod_\omega^{met} P_n$ and $Q\subset \prod_\omega^{met} Q_n$. Hence, applying ((very) flexible) local HS-stability for $\Gamma$ and $\Sigma$, with ranges in $\prod_\omega^{met} P_n$ and $\prod_\omega^{met} Q_n$ respectively, finishes the proof.
\end{proof}

\begin{example}
    Proposition~\ref{prop:product:amenable} immediately gives some easy examples of non-amenable groups which are locally HS-stable but not HS-stable. Indeed, the direct product of $\bF_2$ with any amenable locally HS-stable group $\Gamma$ is locally HS-stable, but it will not be HS-stable if $\Gamma$ is not HS-stable. For instance, $S_\infty\times\bF_2$ produces such an example. Using Theorem \ref{intro:thm:full}, we can also take $\Gamma = [[T]]'$ as in the statement, which yields finitely generated examples.
\end{example}

\subsection{Very flexible local HS-stability}

In this subsection, we show that for amenable groups, very flexible local HS-stability is equivalent to being LEF, which is an analogue of the corresponding statement for usual stability from \cite{ES:HS} (recall from Lemma \ref{lem:LEF:hyperlinear} that the local analogue of MAP is equivalent to LEF). One tool we will use is the so-called diagonal product of markings, which we introduce first.
\begin{definition}
    Fix a sequence of markings $\pi_n\colon\mathbb{F}_S\to\Lambda_n$. The image of the canonical map $\pi\colon\mathbb{F}_S\to\prod_{n}\Lambda_n$ given by $\pi(s)=(\pi_n(s))_n$ for $s\in S$, is called the \emph{diagonal product} of $\pi_n$. We will denote the diagonal product by $\ot\Lambda_n$ and the corresponding marking by $\ot\pi_n$. 
\end{definition}

Note that $\ker  \ot\pi_n=\bigcap \ker\pi_n $.
For future reference, we record the following lemma which follows easily from the definitions.

\begin{lemma}\label{lem:diagmarked}
    Suppose $\pi_n\colon \mathbb{F}_S\to F_n$ is a sequence of markings converging to $\pi\colon\mathbb{F}_S\to \Gamma$.
    \begin{enumerate}\setcounter{enumi}{1}
        \item There is a quotient of marked groups $q:\ot_{n\in\mathbb{N}}F_n\to \Gamma$ whose kernel is given by $\left(\ot_{n\in\mathbb{N}}F_n\right)\cap\left( \bigoplus_{n\in\mathbb{N}} F_n\right)$.
        \item The groups $\Lambda_\ell = \ot_{n\geq \ell}F_n$ converge to $\Gamma$ in the space of $S$-marked groups.
    \end{enumerate}
\end{lemma}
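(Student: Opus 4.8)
The plan is to verify the two assertions of Lemma~\ref{lem:diagmarked} directly from the definitions of diagonal product and of convergence in $\mathcal{G}_S$, using the identity $\ker(\ot_n \pi_n) = \bigcap_n \ker(\pi_n)$ already recorded, together with the characterization \eqref{eq:markedconvergence} of marked convergence.

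For part (1), the natural candidate for the quotient map $q$ is the one induced by the coordinate projections: since $\pi_n \to \pi$, for every $w \in \bF_S$ we have $w \in \ker(\pi)$ iff $w \in \ker(\pi_n)$ for all large $n$, and $w \notin \ker(\pi)$ iff $w \notin \ker(\pi_n)$ for all large $n$. I would first argue that $\ot_n F_n$ surjects onto $\Gamma$ as marked groups by checking $\ker(\ot_n\pi_n) = \bigcap_n \ker(\pi_n) \subseteq \ker(\pi)$, which is immediate from the first half of \eqref{eq:markedconvergence} (if $w$ lies in every $\ker(\pi_n)$, it certainly lies in $\ker(\pi_n)$ for all large $n$, hence in $\ker(\pi)$). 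Then I would identify the kernel of the resulting quotient $q : \ot_n F_n \to \Gamma$: an element of $\ot_n F_n$ is represented by some $w \in \bF_S$, and it maps to $e$ in $\Gamma$ exactly when $w \in \ker(\pi)$, i.e.\ when $\pi_n(w) = e$ for all sufficiently large $n$; by the second half of \eqref{eq:markedconvergence}, if $w \notin \ker(\pi)$ then $\pi_n(w) \neq e$ for all large $n$, so the image of $w$ in $\ot_n F_n$ cannot lie in $\bigoplus_n F_n$. Conversely if $\pi_n(w) = e$ for all large $n$ then the image of $w$ is supported on finitely many coordinates, hence lies in $\bigoplus_n F_n$. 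This shows $\ker(q) = \left(\ot_n F_n\right) \cap \left(\bigoplus_n F_n\right)$.

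For part (2), I would show the markings $\ot_n\pi_n^{(\ell)} : \bF_S \to \Lambda_\ell := \ot_{n \geq \ell} F_n$ converge to $\pi : \bF_S \to \Gamma$ as $\ell \to \infty$, again via \eqref{eq:markedconvergence}. We have $\ker(\ot_{n\geq\ell}\pi_n) = \bigcap_{n \geq \ell} \ker(\pi_n)$. Fix $w \in \bF_S$. If $w \in \ker(\pi)$, then $w \in \ker(\pi_n)$ for all $n \geq N_w$ for some $N_w$, so for every $\ell \geq N_w$ we have $w \in \bigcap_{n \geq \ell}\ker(\pi_n) = \ker(\ot_{n\geq\ell}\pi_n)$. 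If $w \notin \ker(\pi)$, then $w \notin \ker(\pi_n)$ for all $n \geq N_w'$ for some $N_w'$, so for every $\ell \geq N_w'$ there is some $n \geq \ell$ (indeed all such $n$) with $w \notin \ker(\pi_n)$, hence $w \notin \bigcap_{n \geq \ell}\ker(\pi_n) = \ker(\ot_{n \geq \ell}\pi_n)$. Both bullets of \eqref{eq:markedconvergence} are thus satisfied, giving $\Lambda_\ell \to \Gamma$.

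There is no real obstacle here; the statement is essentially bookkeeping with the definitions, as the phrase ``which follows easily from the definitions'' already signals. The one point requiring a little care is making sure the marked-group quotient $q$ in (1) is well-defined and identifying its kernel precisely with the intersection $\left(\ot_n F_n\right)\cap\left(\bigoplus_n F_n\right)$ rather than just showing one containment — this is handled cleanly by invoking \emph{both} directions of \eqref{eq:markedconvergence} as above. (Note the statement is labeled with items (1) and (2) via \texttt{setcounter}, so I would present the two parts in that order; a self-contained writeup might also briefly remark that $\ot_n F_n$ is itself an $S$-marked group with $\ker = \bigcap_n \ker \pi_n$, which is the displayed identity preceding the lemma.)
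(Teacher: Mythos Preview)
Your proposal is correct and, for part (2), follows exactly the paper's approach: identify $\ker(\ot_{n\geq\ell}\pi_n)=\bigcap_{n\geq\ell}\ker(\pi_n)$ and verify both directions of \eqref{eq:markedconvergence}. For part (1) the paper simply cites \cite[Lemma~4.6]{KP}, whereas you supply a short direct argument; your proof is fine and is presumably what that reference contains, so there is no substantive difference.
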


\begin{proof}
(1) This is \cite[Lemma~4.6]{KP}.

(2) A word $w\in\bF_S$ represents the trivial element in $\Lambda_\ell$ if and only if $w$ is trivial in $F_m$ for all $m\geq \ell$. Hence the conclusion follows immediately from \eqref{eq:markedconvergence}.
\end{proof}

The following result is an analogue in the local setting of \cite[Corollary~6.6]{ES:HS}. In fact, our proof will start similarly, until the point where residual finiteness of the group is used in \cite[Corollary~6.6]{ES:HS}. At this point, we will involve the diagonal products introduced above to leverage LEF and finish the proof in the space of marked groups.

\begin{theorem}[Theorem \ref{intro:thm:amenable}(3)]
\label{thm:veryflex}

Let $\Gamma$ be an amenable group. Then $\Gamma$ is very flexibly locally HS-stable if and only if $\Gamma$ is LEF.
\end{theorem}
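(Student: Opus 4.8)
The plan is to prove both implications separately, using Theorem~\ref{thm:char} together with Lemma~\ref{lem:regularcharacter} and the machinery of diagonal products.

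\emph{LEF implies very flexibly locally HS-stable.} First I would argue that, since $\Gamma$ is amenable, it suffices to verify a suitable very flexible version of the local character criterion. More precisely, given a character $\chi:\Gamma\to\C$, amenability together with Connes' Theorem gives an embedding $\vphi:\Gamma\to\prod_\omega^{met}\U(n)$ with $\tau_\omega\circ\vphi=\chi$, written as an approximate homomorphism $\vphi_n:\Gamma\to\U(k_n)$ with $\tau_{k_n}\circ\vphi_n\to\chi$ pointwise. Now comes the point where the argument of \cite[Corollary~6.6]{ES:HS} uses residual finiteness: there one approximates $\chi$ by traces of finite-dimensional representations of $\Gamma$ itself (block-diagonal sums of the regular representation of finite quotients with representations detecting $\chi$), then adds a large block of regular representations of finite quotients to kill the kernel and enlarge the dimension. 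In our setting $\Gamma$ need not be residually finite, so there are no finite quotients to work with directly. Instead, I would replace finite quotients by \emph{$S$-marked finite groups} $F_m$ converging to $\Gamma$ (which exist precisely because $\Gamma$ is LEF, by Theorem~\ref{thm:LE:limit} and Lemma~\ref{lem:marked:LEF}), take the regular representations $\lambda_{F_m}$, whose normalized traces are the regular characters $\delta_e^{(m)}$ pulled back to $\bF_S$, and note these converge pointwise to $\delta_e$. Now form, for each $n$, a block-diagonal representation on a marked group $\Lambda_n$: one block coming from a finite-dimensional representation realizing (an approximation to) $\chi$ on a marked group close to $\Gamma$, and a huge number of copies of $\lambda_{F_{m(n)}}$ for a suitable $m(n)\to\infty$, chosen so that the extra blocks contribute negligibly to the normalized trace (this forces $\tfrac{m_n}{k_n}\not\to 1$ in general, which is exactly why we only get \emph{very} flexible stability). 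By taking diagonal products via Lemma~\ref{lem:diagmarked}, one arranges that the marked groups $\Lambda_n$ converge to $\Gamma$ while the normalized traces converge pointwise to $\chi$. This verifies the local character criterion in its flexible form; combined with the flexible analogue of the argument in Proposition~\ref{prop:flexible} (namely that asymptotically the trace of the flexible partial homomorphism recovers $\chi$), Theorem~\ref{thm:char} then yields very flexible local HS-stability.

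\emph{Very flexibly locally HS-stable implies LEF.} For this direction I would first observe that $\Gamma$ is hyperlinear: being amenable it is sofic, hence hyperlinear (Example~\ref{ex:sofic:hyperlinear}), so it admits a separating approximate homomorphism to $\cU$. Then Lemma~\ref{lem:stable:LEF} applies verbatim: a separating approximate homomorphism is close to a flexible partial homomorphism $\psi_n:\Gamma\to\U(m_n)$, and the same computation with the compression by $P_{k_n}$ shows $\liminf_n\|\psi_n(g)-\psi_n(h)\|_2>0$ for $g\neq h$, so the $\psi_n$ are eventually injective on finite sets, giving a local embedding into $\cU$, hence LEF by Lemma~\ref{lem:LEF:hyperlinear}. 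In fact this direction does not even use amenability, so I would just cite Lemma~\ref{lem:stable:LEF} directly.

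\emph{Main obstacle.} The delicate point is the first implication: without residual finiteness one cannot produce representations of $\Gamma$ itself, so the approximating representations must live on auxiliary marked groups $\Lambda_n$, and one must simultaneously control (i) that the $\Lambda_n$ converge to $\Gamma$ in $\mathcal G_S$, (ii) that the normalized traces converge pointwise to $\chi$, and (iii) that the dimension padding (via regular representations of the finite groups $F_m$) is compatible with the flexibility requirement. Getting all three to cohere is where the diagonal-product lemma (Lemma~\ref{lem:diagmarked}) and a careful diagonal choice of the index $m(n)$ relative to $k_n$ do the work; this replaces the single clean use of residual finiteness in \cite[Corollary~6.6]{ES:HS}.
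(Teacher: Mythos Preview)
Your reverse implication (very flexibly locally HS-stable $\Rightarrow$ LEF) is fine and matches the paper: it is just Lemma~\ref{lem:stable:LEF}, and indeed amenability is not needed there.

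The forward implication, however, has a genuine gap. Your plan is to verify ``a very flexible version of the local character criterion'' and then invoke Theorem~\ref{thm:char}. But Theorem~\ref{thm:char} characterizes \emph{local} HS-stability, not the very flexible version; if your argument actually produced, for each character $\chi$, a partial homomorphism with traces converging to $\chi$, you would be proving that every amenable LEF group is locally HS-stable --- a strictly stronger statement than Theorem~\ref{thm:veryflex}, and one the paper does not claim. More concretely, the step ``one block coming from a finite-dimensional representation realizing (an approximation to) $\chi$ on a marked group close to $\Gamma$'' is circular: the existence of such a block is precisely the local character criterion you are trying to establish. The approximate homomorphism $\vphi_n$ you start with is \emph{not} a representation of any marked group close to $\Gamma$, and padding it with regular representations of the $F_m$ does not make it one (a block sum with a non-multiplicative block is still non-multiplicative). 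Your parenthetical about ``$m_n/k_n\not\to 1$'' conflates the dimension inflation in the definition of very flexible stability with the dimension inflation in a character approximation; these are unrelated.

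The paper's proof does not go through characters at all. It works directly with a given approximate homomorphism $\vphi_n:\Gamma\to\U(k_n)$, extends it to $C^*(\Gamma)$, uses nuclearity of $C^*(\Gamma)$ to get ucp lifts $\theta_n:C^*(\Gamma)\to\bM_{k_n}$ (Choi--Effros lifting), and then Stinespring-dilates each $\theta_n$ to a genuine $*$-homomorphism $\pi_n:C^*(\Gamma)\to B(H)$ with $\theta_n=P_{k_n}\pi_nP_{k_n}$. The role of LEF (and the diagonal products $\Lambda_\ell=\ot_{m\ge\ell}F_m$ from Lemma~\ref{lem:diagmarked}) is to replace $\Gamma$ by residually finite amenable groups surjecting onto it, so that $C^*(\Lambda_\ell)$ is RFD by \cite{BL:RFD}; one then approximates $\pi_n\circ q_\ell$ by finite-dimensional representations on finite-rank projections $Q\ge P_{k_n}$ via \cite{Had:Lift}. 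The uncontrolled rank of $Q$ is where ``very flexible'' enters. This $C^*$-algebraic route --- lifting property, Stinespring dilation, RFD approximation --- is the missing engine in your proposal; the character criterion is not the right tool here.
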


\begin{proof}
If $\Gamma$ is very flexibly locally HS-stable, it follows from Lemma~\ref{lem:stable:LEF} that $\Gamma$ is LEF.

Conversely, assume that $\Gamma$ is LEF, and suppose $\vphi_n:\Gamma\to \U(k_n)$ is an approximate homomorphism. Let $\omega$ be a non-principal ultrafilter on $\N$ and consider the homomorphism $\vphi:\Gamma\to \prod_\omega^{met} \U(k_n)$. By definition of the universal $C^*$-algebra, $\vphi$ extends uniquely to a $^*$-homomorphism $\vphi: C^*(\Gamma)\to \prod_\omega^{met} \bM_{k_n}$. Since $C^*(\Gamma)$ is amenable, it has the lifting property by \cite[Theorem~3.10]{CE:LP}, hence $\vphi$ lifts to a sequence of ucp maps $\theta = (\theta_n)_n: C^*(\Gamma)\to \prod_{n \in \N} \bM_{k_n}$. By Stinespring Dilation (see, e.g., \cite[Theorem~3.6]{takesaki} for this and further background on ucp maps), there exist $^*$-homomorphisms $\pi_n:C^*(\Gamma)\to B(H)$ for some infinite-dimensional Hilbert space $H$ with a fixed orthonormal basis $(e_j)_{j\in\N}$ such that 
\begin{equation}\label{eq:Stinespring}
\theta_n = P_{k_n}\pi_nP_{k_n}
\end{equation}
where $P_{k_n}$ is the projection of $H$ onto the $k_n$-dimensional subspace generated by $e_1, \ldots, e_{k_n}$, and where we identify $\bM_{k_n} = B(P_{k_n}H)$.

Fix a generating set $S$ for $\Gamma$. Since $\Gamma$ is LEF, we can find a sequence of finite $S$-marked groups $F_m$ converging to $\Gamma$ in the space of $S$-marked groups. Consider the diagonal products $\Lambda_\ell = \ot_{m\geq \ell}F_m$.
Lemma~\ref{lem:diagmarked}(1) gives us quotient maps $q_\ell:\Lambda_\ell\to \Gamma$, and hence composing with $\pi_n$ we get homomorphisms $\pi_n\circ q_\ell:\Lambda_\ell\to \U(H)$. We note that each $\Lambda_\ell$ is residually finite being a subgroup of a product of finite groups, and amenable by Lemma~\ref{lem:diagmarked}(1). Hence $C^*(\Lambda_\ell)$ is RFD by \cite{BL:RFD}, and thus by \cite{Had:Lift} there exist finite rank projections $Q_i^{n,\ell}\geq P_{k_n}$ and $^*$-homomorphisms
\begin{equation}\label{eq:findimconv}
\rho_i^{n,\ell}:C^*(\Lambda_\ell)\to B(Q_i^{n,\ell}H)
\end{equation}
such that for every $n,\ell\in\N$ and every $a\in C^*(\Lambda_\ell)$, $\rho_i^{n,\ell}(a)$ converges to $\pi_n(q_\ell(a))$ in the strong operator topology as $i\to\infty$. 

Next, by Lemma~\ref{lem:diagmarked}(2), $\Lambda_\ell$ converges to $\Gamma$ in the space of $S$-marked groups. Write $\sigma:\bF_S\to\Gamma$ and $\sigma_\ell:\bF_S\to\Lambda_\ell$ for the respective markings. 
We can apply the second part of Remark~\ref{rem:convergence:quotient} to the identity maps $\psi_\ell\colon\Lambda_\ell\to\Lambda_\ell$ and get a partial homomoprhism $f_\ell\colon \Gamma\to \Lambda_\ell$ such that for any $w\in \bF_S$ we have $f_\ell(\sigma(w))=\sigma_\ell(w)$ for $\ell$ large enough. 
We know that for each $g\in \Gamma$ and $n$ large enough $\rho^{n,n}_i(f_n(g))$ converges in the strong operator topology to $\pi_n(g)$ when $i\to\infty$. Hence there exist $i_n$ such that for any $g\in\Gamma$ 
$$\sum_{j=1}^{k_n}\norm{[\pi_n(g) - \rho^{n,n}_{i_n}(f_n(g))](e_j)}^2\to 0.$$
By construction, $\psi_n=\rho_{i_{n}}^{n,n}\circ f_n: \Gamma\to \U(Q_{i_{n}}^{n,n} H)$ is a partial homomorphism and for any $g\in\Gamma$ we have
\begin{align*}
\Dn(\vphi_n(g),\psi_n(g)) &= \norm{\vphi_n(g) - P_{k_n}\psi_n(g)P_{k_n}}_2 \\
&\leq \norm{\vphi_n(g) - \theta_n(g)}_2 + \norm{P_{k_n}\pi_n(g)P_{k_n} - P_{k_n}\psi_n(g)P_{k_n}}_2\\
&= \norm{\vphi_n(g) - \theta_n(g)}_2 + \left(\frac{\sum_{j=1}^{k_n}\norm{[P_{k_n}\pi_n(g)P_{k_n} - P_{k_n}\psi_n(g)P_{k_n}](e_j)}^2}{k_n}\right)^{1/2}\\
&\xrightarrow[]{n\to\infty} 0.
\end{align*}
We conclude that $\Gamma$ is very flexibly locally HS-stable, finishing the proof.
\end{proof}

\subsection{Some remarks on weak local HS-stability}

In the literature on P-stability and HS-stability there is a further notion that is often studied, which is potentially useful when considering the connection between stability and the corresponding approximation properties. Namely, when considering the (local) stability question, we only consider approximate homomorphisms which separate points ``maximally'' (cf. Remark \ref{rmk:amplification}):

\begin{definition}
\label{def:weak}

An approximate homomorphism $\vphi : \Gamma \to \cS$ is called a \emph{sofic approximation} if
$$\lim\limits_{n \to \infty} \dHamm(\vphi_n(g), \id_{k_n}) = 1.$$
for all $e \neq g \in \Gamma$. We say that $\Gamma$ is \emph{weakly P-stable} if every sofic approximation is close to a sequence of homomorphisms.

An approximate homomorphism $\vphi : \Gamma \to \cU$ is called a \emph{hyperlinear approximation} if
$$\lim\limits_{n \to \infty} \| \vphi_n(g) - \mathbbm{1} \|_2 = \sqrt{2}.$$
for all $e \neq g \in \Gamma$. We say that $\Gamma$ is \emph{weakly HS-stable} if every hyperlinear approximation is close to a sequence of homomorphisms.
\end{definition}

We can similarly formulate local versions of weak stability (cf. \cite[Definition 1.3]{bradford}):

\begin{definition}
\label{def:weak:local}

A group is \emph{weakly locally P-stable} if every sofic approximation $\vphi_n : \Gamma \to \Sym(k_n)$ is close to a partial homomorphism $\psi_n : \Gamma \to \Sym(k_n)$. It is \emph{weakly locally HS-stable} if every hyperlinear approximation $\vphi_n : \Gamma \to \U(k_n)$ is close to a partial homomorphism $\psi_n : \Gamma \to \U(k_n)$.
\end{definition}

The reason why these properties can be useful in the the connection between stability and approximation properties is the following fact.

\begin{lemma}
\label{lem:weaklystable:LEF}

If $\Gamma$ is hyperlinear and weakly HS-stable, then $\Gamma$ is MAP. If $\Gamma$ is hyperlinear and weakly locally HS-stable, then $\Gamma$ is LEF.
\end{lemma}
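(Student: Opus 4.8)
The plan is to mirror the proof of Lemma~\ref{lem:stable:LEF}, the one essential new point being that, in order to invoke \emph{weak} (local) HS-stability, one must feed it a genuine hyperlinear approximation rather than an arbitrary separating approximate homomorphism. So the first step is to use hyperlinearity of $\Gamma$ together with the amplification trick (Remark~\ref{rmk:amplification}, see \cite{ES:amplification}) to produce a hyperlinear approximation $\vphi_n : \Gamma \to \U(k_n)$, that is, one satisfying $\|\vphi_n(g) - \mathbbm{1}\|_2 \to \sqrt{2}$ for every $e \neq g \in \Gamma$. Passing from a separating approximate homomorphism to a ``maximally'' separating one is exactly the content of Remark~\ref{rmk:amplification}, and this is the only point of the argument requiring genuine care; everything after it is elementary bookkeeping.

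Next, for the first assertion I would apply weak HS-stability to $\vphi$: it is close to a sequence of honest homomorphisms $\psi_n : \Gamma \to \U(k_n)$, i.e. a sequence of finite-dimensional unitary representations of $\Gamma$, with $\|\vphi_n(g) - \psi_n(g)\|_2 \to 0$ for all $g$. For $e \neq g$, the triangle inequality then gives
\[
\liminf_{n\to\infty} \|\psi_n(g) - \mathbbm{1}\|_2 \;\geq\; \liminf_{n\to\infty}\big( \|\vphi_n(g) - \mathbbm{1}\|_2 - \|\vphi_n(g) - \psi_n(g)\|_2 \big) \;=\; \sqrt{2} \;>\; 0,
\]
so $\psi_n(g) \neq \mathbbm{1}$ for $n$ large. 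Hence finite-dimensional unitary representations separate the points of $\Gamma$, which is precisely the statement that $\Gamma$ is MAP.

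Finally, for the second assertion I would instead apply weak local HS-stability: $\vphi$ is close to a \emph{partial} homomorphism $\psi_n : \Gamma \to \U(k_n)$. The identical triangle-inequality estimate shows $\liminf_n \|\psi_n(g) - \mathbbm{1}\|_2 \geq \sqrt{2} > 0$ for every $e \neq g$, so $(\psi_n)_n$ is a separating partial homomorphism $\Gamma \to \cU$ in the sense of Definition~\ref{def:partialapprox}(iii). The implication $(3)\Rightarrow(1)$ of Lemma~\ref{lem:LEF:hyperlinear} then yields that $\Gamma$ is LEF. (Alternatively, one can reuse the last paragraph of the proof of Lemma~\ref{lem:stable:LEF} verbatim with $m_n = k_n$.) I do not anticipate any serious obstacle beyond the amplification step, which is why I would isolate and dispatch it at the very beginning.
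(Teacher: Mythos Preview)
Your proposal is correct and follows essentially the same approach as the paper: the paper's proof simply states that the argument of Lemma~\ref{lem:stable:LEF} applies verbatim once one uses Remark~\ref{rmk:amplification} to upgrade a separating approximate homomorphism to a hyperlinear approximation, which is precisely what you isolate and then carry out in detail. Your explicit triangle-inequality estimate and appeal to Lemma~\ref{lem:LEF:hyperlinear} are exactly the unpacking of ``the same argument as Lemma~\ref{lem:stable:LEF}.''
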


\begin{proof}
This follows from the same argument as Lemma \ref{lem:stable:LEF}: indeed if a group is hyperlinear, it admits not only a separating approximate representation, but even a hyperlinear approximation (cf. Remark \ref{rmk:amplification}).
\end{proof}

For finitely generated amenable groups, both of these notions are well-understood in the case of P-stability:

\begin{proposition}[{\cite[Theorem 7.2]{AP:commuting}, \cite[Lemma 2.7]{bradford}}]

A finitely generated amenable group is weakly P-stable if and only if it is residually finite. It is weakly locally P-stable if and only if it is LEF.
\end{proposition}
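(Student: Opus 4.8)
The plan is to split each biconditional into its two implications. The directions that deduce a finiteness property (residual finiteness, resp.\ LEF) from weak (local) P-stability are soft; their converses rest on the essential uniqueness of sofic approximations of amenable groups.

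\emph{The forward directions.} Here I would run the argument of Lemma~\ref{lem:weaklystable:LEF}. Since $\Gamma$ is amenable it is sofic \cite{weiss}, so it carries a sofic approximation $\vphi_n\colon\Gamma\to\Sym(k_n)$ (necessarily $k_n\to\infty$, as an infinite group does not embed into a finite group). Weak P-stability (resp.\ weak local P-stability) provides genuine homomorphisms (resp.\ a partial homomorphism) $\psi_n\colon\Gamma\to\Sym(k_n)$ close to $(\vphi_n)_n$, and then for every $e\ne g\in\Gamma$ the triangle inequality gives
\[
\liminf_n\dHamm(\psi_n(g),\id_{k_n})\ \ge\ \liminf_n\dHamm(\vphi_n(g),\id_{k_n})-\lim_n\dHamm(\vphi_n(g),\psi_n(g))\ =\ 1,
\]
so $(\psi_n)_n$ is separating. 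In the local case this is a separating partial homomorphism into $\cS$, whence $\Gamma$ is LEF by Lemma~\ref{lem:LEF:hyperlinear}; in the global case each $\ker\psi_n$ has finite index and $\bigcap_n\ker\psi_n=\{e\}$ by the display, so $\Gamma$ is residually finite. (Finite generation is not used here, and I would expect the statement to hold for arbitrary countable amenable $\Gamma$.)

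\emph{The converse directions.} Assume $\Gamma$ is infinite and fix a sofic approximation $\vphi_n\colon\Gamma\to\Sym(k_n)$, $k_n\to\infty$. The core idea is to \emph{manufacture a reference sofic approximation} $\beta_n\colon\Gamma\to\Sym(k_n)$ that is a genuine homomorphism (resp.\ a partial homomorphism) \emph{at exactly the dimensions $k_n$}, and then quote the theorem of Elek and Szab\'o that for an amenable group any two sofic approximations living on the same dimensions agree up to a vanishing Hamming error after conjugation, uniformly in the group element: there exist $\sigma_n\in\Sym(k_n)$ with $\dHamm(\sigma_n\beta_n(g)\sigma_n^{-1},\vphi_n(g))\to 0$ for all $g\in\Gamma$. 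Then $\psi_n\coloneqq\sigma_n\beta_n(\cdot)\sigma_n^{-1}$ is close to $\vphi_n$, and since conjugation by a fixed permutation preserves being a homomorphism and preserves multiplicativity on any subset, $(\psi_n)_n$ is a genuine (resp.\ partial) homomorphism, which is exactly weak (local) P-stability. To build $\beta_n$: if $\Gamma$ is residually finite, fix finite-index \emph{normal} subgroups $N_1\supseteq N_2\supseteq\cdots$ with $\bigcap_jN_j=\{e\}$ and $[\Gamma:N_j]\to\infty$ (possible as $\Gamma$ is infinite residually finite), and let $\alpha_j\colon\Gamma\to\Sym(\Gamma/N_j)$ be the coset action; if $\Gamma$ is LEF, use Theorem~\ref{thm:LE:limit} to get finite groups $F_m$ converging to $\Gamma$ with maps $\vphi^{(m)}\colon\Gamma\to F_m$ multiplicative and injective on an exhausting chain $B_m$ (so $|F_m|\ge|B_m|\to\infty$), and let $\gamma_m\colon\Gamma\to\Sym(F_m)$ be $\vphi^{(m)}$ composed with the left-multiplication action. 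In both cases pick $i(n)\to\infty$ with $r_{i(n)}=o(k_n)$, where $r_m$ denotes $[\Gamma:N_m]$ resp.\ $|F_m|$, write $k_n=a_nr_{i(n)}+b_n$ with $0\le b_n<r_{i(n)}$, and set $\beta_n\coloneqq\alpha_{i(n)}^{\oplus a_n}\oplus\id^{\oplus b_n}$ (resp.\ with $\gamma_{i(n)}$ in place of $\alpha_{i(n)}$). Then $\beta_n$ is a genuine homomorphism (resp.\ multiplicative on $B_{i(n)}$, hence a partial homomorphism since $B_{i(n)}\uparrow\Gamma$), and for $e\ne g\in\Gamma$ and $n$ large, $g$ acts without fixed points on each copy of $\Gamma/N_{i(n)}$ (here normality is used) resp.\ of $F_{i(n)}$ (left multiplication by $\vphi^{(i(n))}(g)\ne e$ is free), so $\dHamm(\beta_n(g),\id_{k_n})=1-b_n/k_n\to 1$ and $\beta_n$ is indeed a sofic approximation.

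The only genuinely nontrivial ingredient, and the step I expect to be the main obstacle, is the Elek--Szab\'o uniqueness theorem for sofic approximations of amenable groups; everything else is bookkeeping, the one mild subtlety being to pad the reference approximation with $b_n=o(k_n)$ trivial points so that it lands on the prescribed dimensions $k_n$ without spoiling separation. In essence this is the content of \cite[Theorem~7.2]{AP:commuting} together with \cite[Lemma~2.7]{bradford}.
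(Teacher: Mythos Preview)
The paper does not actually prove this proposition --- it is quoted directly from \cite{AP:commuting} and \cite{bradford} --- but it proves the HS-analogue (Proposition~\ref{prop:weaklylocallyHSstable}) by exactly your template: the forward directions via Lemma~\ref{lem:weaklystable:LEF}, and the converses by producing a reference (partial-)homomorphism sofic approximation at the prescribed degrees (your explicit padding argument is what Lemma~\ref{lem:dims2} packages) and then invoking the uniqueness theorem (Elek--Szab\'o for permutations, Corollary~\ref{cor:hyperlinear:conjugate} for unitaries). Your proof is correct and is precisely the permutation counterpart of the paper's argument; your parenthetical that finite generation should be unnecessary is also borne out by the paper's remark after Corollary~\ref{cor:hyperlinear:conjugate} that \cite[Theorem~3.12]{actiontraces} removes this hypothesis from Elek--Szab\'o.
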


We now consider the analogous statements for (local) HS-stability, without the assumption of finite generation.

\begin{proposition}[Theorem \ref{intro:thm:amenable}(4)]
\label{prop:weaklylocallyHSstable}

An amenable group is weakly HS-stable if and only if it is MAP. It is weakly locally HS-stable if and only if it is LEF.
\end{proposition}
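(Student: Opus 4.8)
The plan is to prove both statements via the character criterion of Theorem~\ref{thm:char} together with the basic observations about the regular character (Lemma~\ref{lem:regularcharacter}). For the weak HS-stability statement: one direction is Lemma~\ref{lem:weaklystable:LEF} (in fact its non-local half), which shows that a hyperlinear weakly HS-stable group is MAP; since amenable groups are hyperlinear (indeed sofic), this gives that an amenable weakly HS-stable group is MAP. For the converse, suppose $\Gamma$ is amenable and MAP. Given a hyperlinear approximation $\vphi_n:\Gamma\to\U(k_n)$, pass to the associated homomorphism $\vphi:\Gamma\to\prod_\omega^{met}\U(k_n)$ and let $\chi\coloneqq\tau_\omega\circ\vphi$ be the associated character. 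Because $\vphi_n$ is a \emph{hyperlinear approximation}, $\|\vphi_n(g)-\mathbbm{1}\|_2\to\sqrt2$ for every $e\ne g$, which forces $\chi=\delta_e$, the regular character. Since $\Gamma$ is MAP, it admits genuine finite-dimensional representations separating points; combined with amenability, one argues (exactly as in the Hadwin--Shulman proof, i.e.\ the ``global'' version of Theorem~\ref{thm:char}) that $\delta_e$ is a pointwise limit of normalized traces of honest finite-dimensional representations $\theta_n:\Gamma\to\U(l_n)$ --- for instance by taking suitable direct sums of the separating representations so that the trace of each nontrivial element tends to $0$. Then $\vphi$ and $\theta$ (after equalizing dimensions via Lemma~\ref{lem:dims}) induce homomorphisms to $\prod_\omega^{met}\U(k_n)$ with the same character, so by Theorem~\ref{thm:hadwin} they are unitarily conjugate in the metric ultraproduct; conjugating $\theta$ by that unitary gives a lift of $\vphi$ to a sequence of genuine homomorphisms (not merely partial ones), which by Lemma~\ref{lem:approximate:ultraproduct} yields, along a subsequence in $\omega$, a sequence of homomorphisms close to $\vphi_n$. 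Hence $\Gamma$ is weakly HS-stable.

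**The local statement.** For weak \emph{local} HS-stability, one direction is again Lemma~\ref{lem:weaklystable:LEF}: a hyperlinear (hence any amenable) weakly locally HS-stable group is LEF. For the converse, assume $\Gamma$ is amenable and LEF. Given a hyperlinear approximation $\vphi_n:\Gamma\to\U(k_n)$, form $\vphi:\Gamma\to\prod_\omega^{met}\U(k_n)$ and its character $\chi=\tau_\omega\circ\vphi$; as above, the hyperlinearity of the approximation forces $\chi=\delta_e$. Now invoke Lemma~\ref{lem:regularcharacter}: since $\Gamma$ is LEF, $\delta_e$ satisfies the conclusion of the local character criterion, i.e.\ (via Lemma~\ref{rem:equivalent:criterion}) there is a partial homomorphism $\psi_n:\Gamma\to\U(l_n)$ with $\tau_{l_n}\circ\psi_n\to\delta_e=\chi$ pointwise. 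By Lemma~\ref{lem:dims} we may take $l_n=k_n$, so $\psi=(\psi_n)_n$ gives a homomorphism $\Gamma\to\prod_\omega^{met}\U(k_n)$ with $\tau_\omega\circ\psi=\chi=\tau_\omega\circ\vphi$. Theorem~\ref{thm:hadwin} then provides a unitary $u\in\prod_\omega^{met}\U(k_n)$ with $\vphi=u\psi u^*$; the right-hand side is a homomorphism to $\prod_\omega^{alg}\U(k_n)$ (conjugating a partial homomorphism by a fixed unitary stays a partial homomorphism, cf.\ the proof of Theorem~\ref{thm:char}), so $\vphi$ lifts to the algebraic ultraproduct, and by Lemma~\ref{lem:approximate:ultraproduct}(2) we extract a subsequence along which $(\psi_{n_m})_m$ is a partial homomorphism close to $(\vphi_{n_m})_m$. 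Thus $\Gamma$ is weakly locally HS-stable.

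**Main obstacle.** The only delicate point is the reduction $\chi=\delta_e$: one must check that a \emph{hyperlinear approximation} (as opposed to a merely separating approximate homomorphism) forces the associated character in the metric ultraproduct to be the regular character, i.e.\ that $\lim_{n\to\omega}\|\vphi_n(g)-\mathbbm1\|_2=\sqrt2$ translates into $\tau_\omega(\vphi(g))=0$ for $e\ne g$. This is a direct computation from $\|\vphi_n(g)-\mathbbm1\|_2^2=2-2\operatorname{Re}\tau_{k_n}(\vphi_n(g))$, but it is exactly the place where the ``weak'' hypothesis does the work, mirroring the role of separation in Lemma~\ref{lem:stable:LEF}. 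Everything else is an application of the already-established character machinery (Theorems~\ref{thm:char} and \ref{thm:hadwin}, Lemmas~\ref{lem:regularcharacter}, \ref{lem:dims}, \ref{rem:equivalent:criterion}, \ref{lem:approximate:ultraproduct}, \ref{lem:stable:ultraproduct}) plus, for the non-local MAP direction, the classical Hadwin--Shulman argument that MAP $+$ amenable makes $\delta_e$ a limit of finite-dimensional traces.
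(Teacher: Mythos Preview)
Your argument is essentially the paper's proof with Corollary~\ref{cor:hyperlinear:conjugate} and Lemma~\ref{lem:dims2} unpacked inline: the paper first builds, via Lemma~\ref{lem:dims2}, a second hyperlinear approximation consisting of genuine (respectively partial) homomorphisms $\psi_n:\Gamma\to\U(k_n)$ with the prescribed dimensions, and then applies Corollary~\ref{cor:hyperlinear:conjugate} (any two hyperlinear approximations of an amenable group are close after conjugation) to obtain unitaries $u_n$ with $(u_n\psi_n u_n^*)_n$ close to $(\vphi_n)_n$. Your route through Lemma~\ref{lem:regularcharacter}, Lemma~\ref{lem:dims} and Theorem~\ref{thm:hadwin} is precisely the content of those two auxiliary results, since Corollary~\ref{cor:hyperlinear:conjugate} is itself just Theorem~\ref{thm:hadwin} applied with character $\delta_e$, and Lemma~\ref{lem:dims2} is an amplification argument parallel to the one behind Lemma~\ref{lem:dims}.

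One remark on the step you single out as the ``main obstacle'': the identity $\|\vphi_n(g)-\mathbbm{1}\|_2^2 = 2 - 2\operatorname{Re}\tau_{k_n}(\vphi_n(g))$ only yields $\operatorname{Re}\tau_\omega(\vphi(g))=0$ for $g\neq e$, not $\tau_\omega(\vphi(g))=0$. The paper's proof of Corollary~\ref{cor:hyperlinear:conjugate} asserts $\tau_\omega\circ\vphi=\delta_e$ without further comment, so you are matching the paper's level of detail here; just be aware that the computation you wrote does not by itself force $\chi=\delta_e$.
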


The key ingredient is the following uniqueness result for hyperlinear approximations of a given amenable group $\Gamma$. The similar result for sofic approximations was established by Elek and Szab{\'o} \cite{ES:sofic}, see also \cite[Theorem 3.12]{actiontraces} for a more general fact that removes the finite generation hypothesis in \cite{ES:sofic}. For hyperlinear approximations, the result easily follows from \cite{HadwinAppr}, as reformulated in Proposition~\ref{thm:hadwin}.

\begin{corollary}
\label{cor:hyperlinear:conjugate}

Let $\vphi_n, \psi_n : \Gamma \to \U(k_n)$ be hyperlinear approximations of an amenable group. Then there exists a sequence of unitaries $u_n \in \U(k_n)$ such that $(\vphi_n)_n$ is close to $(u_n \psi_n u_n^*)_n$.
\end{corollary}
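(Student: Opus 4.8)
The plan is to reduce the statement to Proposition~\ref{thm:hadwin}: once we know that the two approximations induce the same trace, that proposition conjugates them, and a routine diagonalization converts the resulting ultrafilter statement into the honest sequential one demanded by Definition~\ref{def:close}.

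Concretely, I would first fix a non-principal ultrafilter $\omega$ on $\N$. Since $\vphi_n$ and $\psi_n$ are in particular separating approximate homomorphisms $\Gamma\to\U(k_n)$, Lemma~\ref{lem:approximate:ultraproduct}(1) gives injective homomorphisms $\vphi_{met},\psi_{met}\colon\Gamma\to\prod_\omega^{met}\U(k_n)$. The crucial point is that both induce the regular character $\delta_e$ on $\Gamma$: for $e\neq g$, since $\vphi_{met}(g)$ is a unitary in the tracial von Neumann algebra $\prod_\omega^{met}\bM_{k_n}$ (cf.\ Remark~\ref{rem:ultraproducts}) one has $\norm{\vphi_{met}(g)-\mathbbm{1}}_2^2=2-2\operatorname{Re}\tau_\omega(\vphi_{met}(g))$, and the maximal-separation condition defining a hyperlinear approximation forces $\tau_\omega(\vphi_{met}(g))=0$; likewise for $\psi_{met}$. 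Hence $\tau_\omega\circ\vphi_{met}=\delta_e=\tau_\omega\circ\psi_{met}$, so by amenability of $\Gamma$ Proposition~\ref{thm:hadwin} produces a unitary $u\in\prod_\omega^{met}\U(k_n)$ with $\vphi_{met}=u\,\psi_{met}\,u^*$. Writing $u=(u_n)_n$ with $u_n\in\U(k_n)$ (Remark~\ref{rem:ultraproducts}), this says precisely that $\lim_{n\to\omega}\norm{\vphi_n(g)-u_n\psi_n(g)u_n^*}_2=0$ for every $g\in\Gamma$. To upgrade this to an honest limit, I would argue that for every finite $F\subset\Gamma$ and every $\eps>0$ the set of indices $n$ admitting some $v\in\U(k_n)$ with $\norm{\vphi_n(g)-v\psi_n(g)v^*}_2<\eps$ for all $g\in F$ is cofinite: if not, its infinite complement (after re-indexing) would again be a pair of hyperlinear approximations of $\Gamma$, and applying the ultrafilter statement just obtained to them contradicts the defining property of the complement. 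Exhausting $\Gamma$ by finite sets and choosing conjugators block by block then yields the desired sequence $(u_n)_n$.

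The only genuinely delicate step is the trace-matching, i.e.\ checking that a hyperlinear approximation really carries the regular character ($\tau_{k_n}(\vphi_n(g))\to 0$ for $e\neq g$, not merely its real part); granting this, the corollary is a formal consequence of Proposition~\ref{thm:hadwin} together with the ultraproduct dictionary of Section~\ref{sec:prelim}. In particular, in the situations where the corollary is applied --- where one starts from homomorphisms into $\prod_\omega^{met}\U(k_n)$ that are known to restrict to the canonical trace of $L(\Gamma)$ --- this step is immediate, and the argument above goes through verbatim.
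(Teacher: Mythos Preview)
Your approach mirrors the paper's one-line proof: compute that a hyperlinear approximation induces the regular character $\delta_e$ on $\Gamma$, then invoke Proposition~\ref{thm:hadwin}. Your added diagonalization to convert the ultrafilter conjugacy into an honest sequential one (as required by Definition~\ref{def:close}) is a correct elaboration that the paper leaves implicit.

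More importantly, the concern you flag about trace-matching is real, and it is a gap that the paper's proof shares. From $\|\vphi_n(g)-\mathbbm{1}\|_2\to\sqrt{2}$ one only obtains $\operatorname{Re}\tau_{k_n}(\vphi_n(g))\to 0$, and this does \emph{not} force $\tau_{k_n}(\vphi_n(g))\to 0$. Concretely, take $\Gamma=\Z$ and $\vphi_n(m)=u_n^m$ where $u_n\in\U(n)$ is diagonal with eigenvalues equidistributing towards the probability measure $\tfrac{1}{2\pi}(1+\sin\theta)\,d\theta$ on the circle. Then $\tau_n(u_n)\to i/2$ while $\tau_n(u_n^m)\to 0$ for $|m|\geq 2$, so $\operatorname{Re}\tau_n(u_n^m)\to 0$ for all $m\neq 0$ and $(\vphi_n)_n$ is a hyperlinear approximation in the sense of Definition~\ref{def:weak}. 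Compare with $\psi_n(m)=v_n^m$ where the eigenvalues of $v_n$ equidistribute towards Haar measure (so $\tau_n(v_n^m)\to 0$ for all $m\neq 0$): since conjugation preserves trace, no choice of unitaries can make $(\vphi_n)_n$ close to $(u_n\psi_n u_n^*)_n$, the traces at $m=1$ having distinct limits $i/2$ and $0$. With the stated definition, the corollary is therefore false.

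The fix is to require $\tau_{k_n}(\vphi_n(g))\to 0$ for $g\neq e$ in the definition of hyperlinear approximation (equivalently, that the induced map into the ultraproduct carries the regular trace), which is the convention typically adopted in the literature and evidently what the authors intend. Under that hypothesis both your argument and the paper's go through, so your caveat in the final paragraph is exactly on point.
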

\begin{proof}
    By definition, given a hyperlinear approximation $\vphi:\Gamma\to \prod_\omega \U(k_n)$, we have $\tau_\omega\circ \vphi = \delta_e$, hence the result follows immediately from Proposition~\ref{thm:hadwin}.
\end{proof}

Finally, we need the following fact:

\begin{lemma}
\label{lem:dims2}

Let $(k_n)_n \subset \N$ be an increasing sequence. If $\Gamma$ is MAP, then there exists a sequence of homomorphisms $\vphi_n : \Gamma \to \U(k_n)$ that form a hyperlinear approximation. If $\Gamma$ is LEF, then there exists a partial homomorphism $\vphi_n : \Gamma \to \U(k_n)$ that forms a hyperlinear approximation.
\end{lemma}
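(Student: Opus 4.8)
The plan is to handle both statements by the same two-step scheme: first produce, for every finite set $F\subset\Gamma$ and every $\eps>0$, a finite-dimensional unitary representation (respectively, a partial homomorphism) of $\Gamma$ whose normalized trace is $\eps$-close to the regular character $\delta_e$ on $F$; then run a dimension-padding argument, exactly as in the proof of Lemma~\ref{lem:dims}, to realize these in the prescribed groups $\U(k_n)$.

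\textbf{Step 1, the MAP case.} Fix a finite set $F=\{e,g_1,\dots,g_k\}\subset\Gamma$ and $\eps>0$. Since $\Gamma$ is MAP, for each $i$ there is a finite-dimensional representation $\rho_i$ with $\rho_i(g_i)\neq\mathbbm{1}$; setting $\rho\coloneqq\bigoplus_{i=1}^k\rho_i$ and $d\coloneqq\dim\rho$ gives a single representation with $\rho(g_i)\neq\mathbbm{1}$ for all $i$. I would then amplify: for $N\in\N$ put
\[
\sigma_N \coloneqq \bigoplus_{j=0}^{N-1}\bigl(\rho^{\ot j}\bigr)^{\oplus d^{\,N-1-j}},
\]
a representation of dimension $Nd^{N-1}$ in which every summand $(\rho^{\ot j})^{\oplus d^{N-1-j}}$ has dimension $d^{N-1}$. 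Using $\Tr(\rho^{\ot j}(g))=\Tr(\rho(g))^{j}$ one computes $\tau(\sigma_N(g))=\tfrac1N\sum_{j=0}^{N-1}\zeta_g^{\,j}$, where $\zeta_g\coloneqq\tau_d(\rho(g))$. For $g=e$ this equals $1$, while for $g=g_i$ the number $\zeta_{g_i}$ is an average of unit-modulus eigenvalues of the unitary $\rho(g_i)\neq\mathbbm{1}$, so $|\zeta_{g_i}|\leq1$ and $\zeta_{g_i}\neq1$ (equality in the triangle inequality would force $\rho(g_i)=\mathbbm{1}$). Hence $\bigl|\tfrac1N\sum_{j=0}^{N-1}\zeta_{g_i}^{\,j}\bigr|=\tfrac1N\cdot\frac{|1-\zeta_{g_i}^{\,N}|}{|1-\zeta_{g_i}|}\to0$, and choosing $N$ large makes $\sigma_N$ the desired representation for $(F,\eps)$.

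\textbf{Step 1, the LEF case.} Here this step is essentially free. By Lemma~\ref{lem:regularcharacter}, since $\Gamma$ is LEF the regular character $\delta_e$ satisfies the conclusion of the local character criterion, so by Lemma~\ref{rem:equivalent:criterion} there is a partial homomorphism $\psi_m\colon\Gamma\to\U(l_m)$ with $\tau_{l_m}\circ\psi_m\to\delta_e$ pointwise. (Concretely, one may compose the defining maps $\Gamma\to F_m$ of a sequence of finite groups with their left-regular representations.) The only point to record is that padding a partial homomorphism with trivial summands keeps it a partial homomorphism.

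\textbf{Step 2, padding.} Enumerate $\Gamma=\{h_0=e,h_1,h_2,\dots\}$. In the MAP case, for each $m$ let $\sigma_m\colon\Gamma\to\U(d_m)$ be a representation obtained from Step~1 with $|\tau_{d_m}(\sigma_m(h_i))-\delta_e(h_i)|<1/m$ for $0\leq i\leq m$; in the LEF case write $\sigma_m\coloneqq\psi_m$ and $d_m\coloneqq l_m$. Since $k_n\to\infty$, one can pick an increasing sequence $m(n)\to\infty$ with $d_{m(n)}\leq k_n$ and $d_{m(n)}/k_n\to0$ (e.g.\ take $m(n)$ maximal with $m(n)\,d_{m(n)}\leq k_n$). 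Writing $k_n=a_n d_{m(n)}+b_n$ with $0\leq b_n<d_{m(n)}$, set
\[
\vphi_n\coloneqq \sigma_{m(n)}^{\oplus a_n}\,\oplus\, \mathbbm{1}^{\oplus b_n}\colon\Gamma\to\U(k_n).
\]
Then $b_n/k_n<d_{m(n)}/k_n\to0$, and for each $g\in\Gamma$,
\[
\tau_{k_n}(\vphi_n(g))=\Bigl(1-\tfrac{b_n}{k_n}\Bigr)\tau_{d_{m(n)}}(\sigma_{m(n)}(g))+\tfrac{b_n}{k_n}\xrightarrow{\ n\to\infty\ }\delta_e(g),
\]
using that $\tau_{d_{m(n)}}(\sigma_{m(n)}(g))\to\delta_e(g)$ (in the MAP case this holds once $m(n)$ exceeds the index of $g$; in the LEF case it is the assumed pointwise convergence). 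Consequently $\norm{\vphi_n(g)-\mathbbm{1}}_2^{2}=2-2\operatorname{Re}\tau_{k_n}(\vphi_n(g))\to2$ for every $e\neq g$, so $(\vphi_n)_n$ is a hyperlinear approximation; it consists of genuine homomorphisms in the MAP case, and of partial homomorphisms in the LEF case, since $m(n)\to\infty$ makes each relation $\sigma_{m(n)}(gh)=\sigma_{m(n)}(g)\sigma_{m(n)}(h)$ hold eventually.

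I expect the main obstacle to be the MAP step: naive tensor powers of $\rho$ do not suffice, since $\tau(\rho^{\ot m}(g))=\tau(\rho(g))^{m}$ fails to tend to $0$ precisely when $\rho(g)$ is a nontrivial scalar. The equal-dimension direct sum of tensor powers above is designed exactly to turn $\tau(\sigma_N(g))$ into a genuine Ces\`aro average $\tfrac1N\sum_{j<N}\zeta_g^{\,j}$, which always vanishes in the limit because $\zeta_g\neq1$ whenever $\rho(g)\neq\mathbbm{1}$. The rest, including the bookkeeping with the prescribed sequence $(k_n)$, is routine.
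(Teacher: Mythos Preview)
Your proof is correct and follows essentially the same approach as the paper: produce (partial) homomorphisms whose traces approximate the regular character $\delta_e$, then pad as in Lemma~\ref{lem:dims} to hit the prescribed dimensions $k_n$. The one noteworthy difference is your amplification in the MAP case: rather than the standard trick from \cite{ES:amplification} (direct sum with a trivial representation to force $|\tau(\rho(g))|<1$, then take tensor powers), you form an equal-weight direct sum of the tensor powers $\rho^{\ot 0},\dots,\rho^{\ot N-1}$ so that the normalized trace becomes the Ces\`aro average $\tfrac1N\sum_{j<N}\zeta_g^{\,j}$, which vanishes in the limit whenever $\zeta_g\neq1$ --- a clean variant that avoids the case split on whether $\rho(g)$ is scalar.
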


\begin{proof}
If $\Gamma$ is MAP, then it admits a sequence of homomorphisms $\Gamma \to \U(k_n)$ that are injective on an exhaustion of $\Gamma$ by finite sets, and if $\Gamma$ is LEF, then it admits a local embedding $\Gamma \to \cU$, by Lemma \ref{lem:LEF:hyperlinear}.

Both statements then follow from an amplification argument analogous to Lemma \ref{lem:dims}, noticing that direct sums of (partial) homomorphisms are (partial) homomorphisms. See \cite{ES:amplification} for a slightly weaker statement, and also \cite[Proposition~6.1]{AP:commuting} and \cite[Lemma~2.12]{bradford} for the full statement in the case of permutations.
\end{proof}

Proposition~\ref{prop:weaklylocallyHSstable} now easily follows from a combination of the aforementioned results:

\begin{proof}[Proof of Proposition~\ref{prop:weaklylocallyHSstable}]

If $\Gamma$ is amenable, hence hyperlinear, and weakly HS-stable, respectively weakly locally HS-stable, then it is MAP, respectively LEF, by Lemma \ref{lem:weaklystable:LEF}.

Conversely, suppose that $\Gamma$ is MAP, and let $\vphi_n : \Gamma \to \U(k_n)$ be a hyperlinear approximation. By Lemma \ref{lem:dims2} there exists a sequence of homomorphisms $\psi_n : \Gamma \to \U(k_n)$ which is also a hyperlinear approximation. By Corollary~\ref{cor:hyperlinear:conjugate}, there exist unitaries $(u_n)_n$ such that $(\vphi_n)_n$ is close to $(u_n \psi_n u_n^*)_n$. The latter is a sequence of homomorphisms and thus $\Gamma$ is weakly HS-stable.

Similarly if $\Gamma$ is LEF we use Lemma \ref{lem:dims2} to obtain a partial homomorphism with prescribed range that is also a hyperlinear approximation, and conclude again by Corollary~\ref{cor:hyperlinear:conjugate}.
\end{proof}

\begin{remark}
One can similarly define flexible versions of weak (local) HS-stability, and it is easy to see that Proposition~\ref{prop:weaklylocallyHSstable} holds for those as well. Indeed, Lemmas~\ref{lem:weaklystable:LEF} and \ref{lem:stable:LEF} generalize easily to proving that a hyperlinear weakly very flexibly stable group is MAP. Thus, for an amenable group, we have the chain of implications:
\begin{align*}
    \text{weakly HS-stable} &\Rightarrow \text{weakly flexibly HS-stable} \Rightarrow \text{weakly very flexibly HS-stable} \\
    &\xRightarrow{\text{Lemma \ref{lem:weaklystable:LEF}}} \text{MAP} \xRightarrow{\text{Proposition \ref{prop:weaklylocallyHSstable}}} \text{weakly HS-stable}.
\end{align*}
The case for the local version is completely analogous, as is the case for permutations.
\end{remark}

\section{Relation with (local) P-stability}\label{sec:examples}

In this section we observe some connections between (local) P-stability and (local) HS-stability, and we apply Theorem~\ref{thm:char} to produce additional examples of amenable locally HS-stable groups.

\subsection{From P-stability to HS-stability}

Similar to the (local) character criterion, there is a criterion for (local) permutation stability in terms of \emph{invariant random subgroups}. Specifically, an amenable group is P-stable if and only if every IRS is cosofic \cite[Theorem~7.10]{BLT:IRS}, and it is locally P-stable if and only if every IRS is partially cosofic \cite[Theorem~4.8]{bradford}. We briefly review the necessary definitions for our purposes and refer to \cite{BLT:IRS} and \cite{bradford} for further definitions and results.

We fix a countable discrete group $\Gamma$ and denote by $\Sub(\Gamma)$ the space of all subgroups of $\Gamma$. With the induced topology from $\{0,1\}^\Gamma$, $\Sub(\Gamma)$ becomes a compact metrizable space (recall that we assume that $\Gamma$ is countable). Moreover, it comes with a canonical continuous action of $\Gamma$ by conjugation. An \emph{invariant random subgroup (IRS)} of $\Gamma$ is a conjugation-invariant probability measure on $\Sub(\Gamma)$. Endowed with the weak$^*$-topology, a sequence of IRS's $\mu_n$ converges to an IRS $\mu$ if and only if $\int f\,d\mu_n\to \int f\,d\mu$ for every continuous function $f:\Sub(\Gamma)\to \R$. 

Given a probability measure preserving (p.m.p.) action $\alpha$ of $\Gamma$ on a probability space $(X,\nu)$, we get an IRS and a character in the following way. 
\begin{enumerate}
    \item Consider the stabilizer map $X\to \Sub(\Gamma):x\mapsto \Stab_\Gamma(x)$. Then the pushforward of $\nu$ through the stabilizer map is an IRS $\mu_\alpha$ for $\Gamma$ given by $\mu_\alpha(A) = \nu(\Stab^{-1}(A))$ for $A\subset\Sub(\Gamma)$. In fact, every IRS of $\Gamma$ arises in this way \cite[Proposition 1.4]{irs:action}.
    \item Define $\chi_\alpha:\Gamma\to \C$ by $\chi(g) = \nu(\Fix(g))$. Then $\chi$ is a character, which we also refer to as the \emph{fixed-point character of $\alpha$}.
\end{enumerate}

We note the following lemma, which is surely known to experts, yet seems not to have appeared in the literature so far. 

\begin{lemma}\label{lem:IRStoChar}
    Given a p.m.p. action $\alpha$ of $\Gamma$ on a probability space $(X,\nu)$, consider its associated IRS $\mu_\alpha$ and fixed-point character $\chi_\alpha$ as above. If $\mu_\alpha$ is a weak$^*$-limit of atomic IRS's supported on finite-index subgroups, then $\chi_\alpha$ is a pointwise limit of traces of finite-dimensional representations.
\end{lemma}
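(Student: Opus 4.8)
The plan is to translate the hypothesis on IRS's into a statement about characters by approximating the fixed-point character $\chi_\alpha$ through the quasi-regular representations attached to the finite-index subgroups. Recall first the standard dictionary: if $\Sigma \leq \Gamma$ has finite index, then the permutation action of $\Gamma$ on $\Gamma/\Sigma$ gives a finite-dimensional (permutation) representation $\lambda_{\Gamma/\Sigma}$ of dimension $[\Gamma:\Sigma]$, and its normalized character is exactly $g \mapsto \frac{1}{[\Gamma:\Sigma]}\#\{x\Sigma \in \Gamma/\Sigma : gx\Sigma = x\Sigma\}$, i.e. the proportion of fixed points of $g$ in the action on $\Gamma/\Sigma$. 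More generally, if $\mu = \sum_j c_j \delta_{\Sigma_j}$ is an atomic IRS supported on finitely many conjugacy classes of finite-index subgroups (so the $\Sigma_j$ range over a finite set of subgroups closed under conjugation, with conjugation-invariant weights), then $\chi_\mu(g) := \sum_j c_j \frac{\#\Fix_{\Gamma/\Sigma_j}(g)}{[\Gamma:\Sigma_j]}$ is precisely the normalized trace of the finite-dimensional representation $\bigoplus_j \lambda_{\Gamma/\Sigma_j}^{\oplus m_j}$ for suitable multiplicities $m_j$ realizing the rational weights $c_j$ (after passing to a common denominator). So every such atomic IRS gives rise to a trace of a finite-dimensional representation, namely its own fixed-point character.

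Next I would argue that the map sending an IRS $\mu$ to its fixed-point character $\chi_\mu$ (defined by $\chi_\mu(g) = \int_{\Sub(\Gamma)} \mathbbm{1}[g \in H]\, d\mu(H)$) is continuous from $\Sub(\Gamma)$-measures with the weak$^*$-topology to characters with the topology of pointwise convergence. This is immediate: for each fixed $g \in \Gamma$, the function $H \mapsto \mathbbm{1}[g \in H]$ is continuous on $\Sub(\Gamma)$ (the sets $\{H : g \in H\}$ are clopen in $\{0,1\}^\Gamma$), hence $\mu \mapsto \int \mathbbm{1}[g\in H]\,d\mu(H)$ is weak$^*$-continuous. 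One should also check that $\chi_\mu$ as defined agrees with the fixed-point character $\chi_\alpha$ when $\mu = \mu_\alpha$ comes from a p.m.p. action, which follows from $\Fix(g) = \{x : g \in \Stab_\Gamma(x)\}$ up to a null set, so $\nu(\Fix(g)) = \mu_\alpha(\{H : g \in H\})$.

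Combining these two observations gives the result: by hypothesis $\mu_\alpha = \lim_n \mu_n$ in the weak$^*$-topology, where each $\mu_n$ is atomic supported on finite-index subgroups; by the first paragraph each $\chi_{\mu_n}$ is the normalized trace of a finite-dimensional representation of $\Gamma$; by continuity $\chi_{\mu_n} \to \chi_{\mu_\alpha} = \chi_\alpha$ pointwise. Hence $\chi_\alpha$ is a pointwise limit of traces of finite-dimensional representations, as claimed. The one point requiring a little care — the main (minor) obstacle — is the passage from real weights to rational ones: an atomic IRS supported on finite-index subgroups need not have rational weights, so one must first approximate $\mu_n$ itself by an atomic IRS with rational, conjugation-invariant weights on the same finite support before realizing it as an honest finite-dimensional representation. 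This is a routine density argument within the finite-dimensional simplex of conjugation-invariant probability vectors on a finite $\Gamma$-set of subgroups, and it is compatible with the diagonal $n \to \infty$ limit.
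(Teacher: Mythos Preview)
Your proposal is correct and follows essentially the same route as the paper: both identify $\chi_\alpha(g)$ with $\mu_\alpha(\{H : g \in H\})$, use that this set is clopen in $\Sub(\Gamma)$ to get weak$^*$-continuity of $\mu \mapsto \chi_\mu$, and then realize the fixed-point character of a finitely supported atomic IRS with rational weights as the normalized trace of a direct sum of quasi-regular representations. The only small point you gloss over is the reduction from ``atomic supported on finite-index subgroups'' to \emph{finitely} supported; the paper handles this by citing \cite[Section~4]{BLT:IRS}, but a direct truncation-and-renormalize argument would work just as well and fits naturally into the diagonal approximation you already sketch for the rational weights.
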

\begin{proof}
For notational convenience we suppress the subscript $\alpha$ from the notations. Let $\mu_n$ be a sequence of atomic IRS's supported on finite-index subgroups converging to $\mu$ in the weak$^*$-topology. By \cite[Section~4]{BLT:IRS}, we can assume that each $\mu_n$ is moreover finitely supported, i.e., $\mu_n$ arises from a p.m.p. action of $\Gamma$ on a finite probability space $(X_n,\nu_n)$. Denote by $\chi_n$ the associated fixed-point character for this action. For $g\in\Gamma$, we denote by $C_g = \{H\in\Sub(\Gamma)\mid g\in H\}$ the set of subgroups of $\Gamma$ containing $g$. One can then calculate for $g\in \Gamma$:
\begin{align*}
\chi(g) &= \nu(\{x\in X\mid gx = x\})\\
&= \nu(\{x\in X\mid g\in \Stab(x)\}\\
&= \nu(\Stab^{-1}(C_g))\\
&= \mu(C_g)\\
&= \lim_{n\to\infty} \mu_n(C_g)\\
&= \lim_{n\to\infty} \chi_n(g)
\end{align*}
where the penultimate equality follows from the fact that $C_g\subset\Sub(\Gamma)$ is clopen, meaning its indicator function is continuous. The proof will thus be complete if we can show that each $\chi_n$ is a pointwise limit of traces of finite-dimensional representations. 

Fix $n\in\N$, $\eps>0$, and enumerate $X_n = \{x_1, \ldots, x_{k_n}\}$. Choose a $\Gamma$-invariant function $q\colon X_n\to \mathbb{Q}$ with positive rational values such that $\sum_{i=1}^{k_n} q(x_i) = 1$ and $\sum_{i=1}^{k_n} \abs{\nu_n(x_i) - q(x_i)}<\eps$. Write $q(x_i)=\frac{t_i}{m}$ for $t_i, m\in\mathbb{Z}$. Consider $\oplus_{i=1}^{k_n} \bM_{t_i}\subseteq \bM_m$ and the $m$-dimensional representation $\rho$ of $\Gamma$ which permutes the blocks according to its action on $X_n$. Then it is easy to check that $\tau_m(\rho(g)) = \sum_{x\in\Fix(g)} q(x)$, hence $\abs{\chi_n(g) - \tau_m(\rho(g))}<\eps$. This finishes the proof of the lemma.
\end{proof}

\begin{remark}
The converse of the previous lemma is false. Indeed, by \cite[Theorem 5.23]{ES:HS}, there exists a finitely generated amenable group $\Gamma$ that is HS-stable but not P-stable. Since it is not P-stable, by the IRS criterion \cite{BLT:IRS} there exists an IRS $\mu$ that is not the weak$^*$-limit of atomic IRS's supported on finite-index subgroups of $\Gamma$. This IRS comes from a p.m.p. action $\alpha$ \cite[Proposition 1.4]{irs:action} with an associated fixed-point character $\chi$. Since $\Gamma$ is HS-stable, $\chi$ is a pointwise limit of traces of finite-dimensional representations by the character criterion \cite[Theorem 4]{HS2}.
\end{remark}

\begin{corollary}[Proposition~\ref{intro:prop:PtoHS}]
\label{cor:PtoHS}

Suppose $\Gamma$ is an amenable group such that every extremal character can be realized as the fixed-point character of a p.m.p. action. If $\Gamma$ is (locally) P-stable, then $\Gamma$ is (locally) HS-stable.
\end{corollary}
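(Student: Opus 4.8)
The plan is to reduce the statement to the character criterion (Theorem~\ref{thm:char}) in the HS-case, and to the IRS criterion \cite[Theorem~4.8]{bradford}, \cite[Theorem~7.10]{BLT:IRS} in the P-case, using Lemma~\ref{lem:IRStoChar} as the bridge and Lemma~\ref{lem:extremalcharacters} to restrict attention to extremal characters. I would treat the two cases in parallel, the only difference being ``finite-index subgroups'' versus ``partially finite-index subgroups'' (i.e. cosofic versus partially cosofic) and correspondingly ``traces of finite-dimensional representations of $\Gamma$'' versus ``partial homomorphisms to unitary groups whose traces converge pointwise'', which is exactly the distinction built into the two versions of Lemma~\ref{lem:IRStoChar} — actually as stated Lemma~\ref{lem:IRStoChar} is phrased for the non-local case, so in the local case I would use the obvious variant where $\mu_\alpha$ is a weak\textsuperscript{*}-limit of atomic IRS's supported on \emph{partially} finite-index subgroups and the conclusion is that $\chi_\alpha$ is realized as a limit of traces of a \emph{partial} homomorphism.

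Concretely: suppose $\Gamma$ is (locally) P-stable and amenable. Let $\chi$ be an extremal character; by hypothesis $\chi = \chi_\alpha$ for some p.m.p.\ action $\alpha$ on $(X,\nu)$. Let $\mu = \mu_\alpha$ be the associated IRS. Since $\Gamma$ is (locally) P-stable and amenable, the IRS criterion says $\mu$ is a weak\textsuperscript{*}-limit of atomic IRS's supported on (partially) finite-index subgroups. By Lemma~\ref{lem:IRStoChar} (respectively its local variant), $\chi_\alpha$ is a pointwise limit of traces of finite-dimensional representations of $\Gamma$ (respectively, there is a partial homomorphism $\psi_n:\Gamma\to\U(k_n)$ with $\tau_{k_n}\circ\psi_n\to\chi$ pointwise). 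Hence $\chi$ satisfies (the conclusion of) the local character criterion in the local case; in the non-local case it is a limit of traces of genuine representations, i.e.\ it satisfies the character criterion of \cite[Theorem~4]{HS2}. Since this holds for all extremal characters, Lemma~\ref{lem:extremalcharacters} upgrades it to all characters, and Theorem~\ref{thm:char} (respectively \cite[Theorem~4]{HS2}) gives that $\Gamma$ is (locally) HS-stable.

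I expect the only real content is making sure the local version of Lemma~\ref{lem:IRStoChar} goes through: one needs that an atomic IRS supported on partially finite-index subgroups arises (after the reduction of \cite[Section~4]{BLT:IRS}, applied in the partial setting as in \cite{bradford}) from an action on a finite set, and that the block-permutation representation $\rho$ built there — which uses that each point-stabilizer has finite index on the relevant finite subset — assembles into a partial homomorphism rather than a genuine one; this is routine but is the place where ``partial'' must be tracked carefully. Everything else is a direct concatenation of already-established results, so there is no serious obstacle; the main thing to get right is the bookkeeping of which criterion (character vs.\ IRS, local vs.\ global) is invoked at each arrow.
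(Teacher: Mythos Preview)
Your proposal is correct and follows essentially the same approach as the paper: combine the (local) IRS criterion for P-stability with Lemma~\ref{lem:IRStoChar} to verify the (local) character criterion on extremal characters, then invoke Lemma~\ref{lem:extremalcharacters} and Theorem~\ref{thm:char} (respectively \cite[Theorem~4]{HS2}). The only cosmetic difference is packaging in the local case: where you speak of a ``local variant of Lemma~\ref{lem:IRStoChar}'' with partially finite-index subgroups, the paper instead says one applies Lemma~\ref{lem:IRStoChar} \emph{to the sequence of marked groups $\Lambda_n$ converging to $\Gamma$} furnished by partial cosoficity --- but unwinding either phrasing yields the same computation (the $C_g$ calculation and the block-permutation construction carried out on each $\Lambda_n$), and your caveat about tracking ``partial'' through the argument is exactly the point.
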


\begin{proof}
The statement for ordinary stability follows directly from Lemma \ref{lem:IRStoChar}, by considering the IRS criterion \cite{BLT:IRS} and the character criterion \cite[Theorem 4]{HS2} as they apply to $\Gamma$. The statement for local stability follows from the local IRS criterion \cite{bradford} and the local character criterion (Theorem \ref{thm:char}), by applying Lemma \ref{lem:IRStoChar} to the appropriate sequence of groups converging to $\Gamma$ in the space of marked groups.
\end{proof}

In the next subsection, we will apply this corollary to deduce local HS-stability for certain topological full groups whose local P-stability was established in \cite{bradford}. However, we hope that the same principle may help to provide further examples of (locally) HS-stable groups. A good candidate seems to be a class of branch groups containing Grigorchuk's group \cite{grigorchuk:group}, whose P-stability was proved in \cite{zheng}. We also refer to \cite{grigorchuk:characters} for a study of a large class of characters for Grigorchuk's group, all of which arise as fixed-point characters of p.m.p. actions.

\subsection{Topological full groups of minimal Cantor subshifts}

One class of examples to which the local character criterion~\ref{criterion} applies are certain topological full groups of minimal Cantor systems, which were introduced in \cite{fullgroups:def}. This follows from a combination of an upcoming announced result \cite{DM} and results in the literature, so we will not go into full detail here, and indicate the necessary references below. 

Given a homeomorphism $T$ of the Cantor set $X$, the topological full group consists of all homeomorphisms $g$ of $X$ such that there exists a \emph{continuous} function $f_g:X\to\Z$ such that $g(x) = T^{f_g(x)}(x)$ for all $x\in X$. We call $T$ minimal if all its orbits are dense. We refer to \cite{GM:TopFullGp,bradford} for further references, and for some relevant properties and constructions related to these topological full groups. If $(X,T)$ is isomorphic to a subshift (see \cite{LM:shifts}), we also call $(X,T)$ a minimal Cantor subshift. We point out the following results.

\begin{theorem}\label{thm:fullgrpprops}
    Let $(X,T)$ be a minimal Cantor system.
    \begin{enumerate}
        \item (\cite{fullgroups:amenable}) $[[T]]$ is amenable.
        \item (\cite{GM:TopFullGp}) $[[T]]$ is LEF.
        \item (\cite{fullgroups:basics}) $[[T]]'$ is an infinite simple group, and it is finitely generated if $(X,T)$ is a minimal Cantor subshift.
        \item (\cite{fullgroups:basics}) There are uncountably many non-isomorphic groups of the form $[[T]]'$ where $(X,T)$ is a minimal Cantor subshift.
        \item (\cite{bradford}) If $(X,T)$ is a minimal Cantor subshift, then $[[T]]'$ is locally P-stable.
    \end{enumerate}
\end{theorem}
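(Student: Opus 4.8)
The statement is a compilation of results from the literature, so the plan is simply to assemble them, indicating for each the relevant reference and, where useful, the idea of the argument. Items (1)--(4) concern the group-theoretic structure of $[[T]]$ and $[[T]]'$ and are independent of stability. Amenability of $[[T]]$ for a minimal Cantor system is the theorem of Juschenko--Monod \cite{fullgroups:amenable}, which I would cite as a black box. For item (2), LEF-ness, I would invoke Grigorchuk--Medynets \cite{GM:TopFullGp}; the underlying idea is that any finite set $B \subset [[T]]$ is supported on finitely many clopen pieces and uses only finitely many powers of $T$, so one may replace $(X,T)$ by a periodic approximation -- a finite orbit agreeing with $(X,T)$ to the required depth -- on which all multiplicative relations among elements of $B$ persist, producing a map to a finite group that is multiplicative and injective on $B$.

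For item (3), infiniteness of $[[T]]'$ is immediate from minimality, which forces elements of arbitrarily large support, while simplicity and -- in the subshift case -- finite generation are due to Matui \cite{fullgroups:basics}. Item (4), the existence of a continuum of pairwise non-isomorphic groups $[[T]]'$ over minimal subshifts, is again due to Matui \cite{fullgroups:basics}: the isomorphism type of $[[T]]'$ determines $(X,T)$ up to flip conjugacy, and there are uncountably many flip-conjugacy classes of minimal subshifts, distinguished for instance by topological entropy (which is a flip-conjugacy invariant). Here too I would simply quote the reference.

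Finally, item (5), local P-stability of $[[T]]'$ for minimal subshifts, is Bradford's theorem \cite[Theorem 1.8]{bradford}; its proof passes through the local invariant-random-subgroup criterion for local P-stability (also from \cite{bradford}) together with Zheng's classification \cite{zheng} of the IRSs of these groups, which exhibits them all as partially cosofic. This is the deepest input, and the one I would flag as the crucial point, even though it is entirely cited. Since nothing in Theorem~\ref{thm:fullgrpprops} is new, the ``proof'' has no genuine obstacle beyond correctly assembling the statements in the subshift case; the new content appears only afterwards, when items (1), (3), (4) and (5) are combined with Corollary~\ref{cor:PtoHS} -- via the Dudko--Medynets result \cite{DM} that all extremal characters of $[[T]]'$ are fixed-point characters of p.m.p.\ actions -- to deduce local HS-stability in Theorem~\ref{thm:full}.
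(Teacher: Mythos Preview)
Your proposal is correct and matches the paper exactly: Theorem~\ref{thm:fullgrpprops} is stated in the paper without proof, as a compilation of cited results, and your attributions and brief sketches (including the Zheng IRS classification behind item~(5)) are accurate. Your anticipation of how the theorem feeds into Theorem~\ref{thm:full} via Corollary~\ref{cor:PtoHS} and \cite{DM} is also exactly how the paper proceeds.
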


Recently, Dudko and Medynets \cite{DM} have announced a proof that for a minimal Cantor system $(X,T)$, all extremal characters of $[[T]]'$ arise as fixed point characters of p.m.p. actions. Combining this with Corollary~\ref{cor:PtoHS} and Theorem~\ref{thm:fullgrpprops}, one thus gets the following result.

\begin{theorem}[Theorem~\ref{intro:thm:full}]\label{thm:full}
    Let $(X,T)$ be a minimal Cantor subshift. Then $[[T]]'$ is locally HS-stable. In particular, there are uncountably many non-isomorphic finitely generated groups which are locally HS-stable, but not HS-stable.
\end{theorem}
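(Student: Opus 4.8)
The plan is to combine three ingredients, two of which are essentially black boxes at this point in the paper. First, by Theorem~\ref{thm:fullgrpprops}(1), $[[T]]'$ is amenable (being a subgroup of the amenable group $[[T]]$), and by Theorem~\ref{thm:fullgrpprops}(5), $[[T]]'$ is locally P-stable whenever $(X,T)$ is a minimal Cantor subshift. Second, the recently announced result of Dudko--Medynets \cite{DM} states precisely that every extremal character of $[[T]]'$ arises as the fixed-point character of a p.m.p.\ action. These two facts are exactly the hypotheses of Corollary~\ref{cor:PtoHS} (Proposition~\ref{intro:prop:PtoHS}): an amenable group all of whose extremal characters are fixed-point characters of p.m.p.\ actions, and which is (locally) P-stable, is (locally) HS-stable. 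Applying this corollary in the local case immediately yields that $[[T]]'$ is locally HS-stable.

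For the ``in particular'' clause, I would argue as follows. By Theorem~\ref{thm:fullgrpprops}(3), for a minimal Cantor subshift $(X,T)$ the group $[[T]]'$ is finitely generated; by Theorem~\ref{thm:fullgrpprops}(4) there are uncountably many non-isomorphic groups of this form. It remains to observe that none of these groups is HS-stable. Indeed, $[[T]]'$ is infinite and simple (Theorem~\ref{thm:fullgrpprops}(3)), so it is not residually finite; since it is amenable, hence hyperlinear (Example~\ref{ex:sofic:hyperlinear}), and finitely generated, an HS-stable such group would be MAP and therefore residually finite by Mal'cev's theorem (see the paragraph preceding Lemma~\ref{lem:stable:LEF}), a contradiction. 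Hence each $[[T]]'$ is locally HS-stable but not HS-stable, and there are uncountably many of them up to isomorphism.

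The main obstacle is not in this proof at all but in the inputs it relies on: the announced result of Dudko--Medynets \cite{DM} (that all extremal characters of $[[T]]'$ are fixed-point characters of p.m.p.\ actions) and Bradford's local P-stability result \cite{bradford} (Theorem~\ref{thm:fullgrpprops}(5)), the latter itself resting on Zheng's classification of IRS's \cite{zheng}. Granting those, the assembly is routine. One small point worth spelling out is that Corollary~\ref{cor:PtoHS} is applied with $\Gamma = [[T]]'$ itself, and in the local case its proof invokes the local character criterion (Theorem~\ref{thm:char}) together with the local IRS criterion of \cite{bradford}, so one should make sure that the hypotheses of those criteria (amenability, and the characterization of local P-stability via partially cosofic IRS's) are in force for $[[T]]'$ — which they are, by Theorem~\ref{thm:fullgrpprops}(1) and (5).

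\begin{proof}
By Theorem~\ref{thm:fullgrpprops}(1), the group $[[T]]$ is amenable, hence so is its subgroup $[[T]]'$. By Theorem~\ref{thm:fullgrpprops}(5), since $(X,T)$ is a minimal Cantor subshift, $[[T]]'$ is locally P-stable. Moreover, by the result of Dudko--Medynets \cite{DM}, every extremal character of $[[T]]'$ is the fixed-point character of a p.m.p.\ action. Thus $[[T]]'$ satisfies the hypotheses of Corollary~\ref{cor:PtoHS}, and we conclude that $[[T]]'$ is locally HS-stable.

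For the final statement, by Theorem~\ref{thm:fullgrpprops}(3) each such $[[T]]'$ is finitely generated, and by Theorem~\ref{thm:fullgrpprops}(4) there are uncountably many of them up to isomorphism. Finally, $[[T]]'$ is infinite and simple by Theorem~\ref{thm:fullgrpprops}(3), hence not residually finite. As $[[T]]'$ is amenable, it is hyperlinear (Example~\ref{ex:sofic:hyperlinear}); being also finitely generated, if it were HS-stable it would be MAP, hence residually finite by Mal'cev's Theorem (see the discussion preceding Lemma~\ref{lem:stable:LEF}), a contradiction. Therefore each $[[T]]'$ is locally HS-stable but not HS-stable.
\end{proof}
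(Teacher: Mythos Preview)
Your proof is correct and follows exactly the approach of the paper: combine the Dudko--Medynets result \cite{DM} with Corollary~\ref{cor:PtoHS} and the relevant items of Theorem~\ref{thm:fullgrpprops}. You have in fact spelled out the ``in particular'' clause (via infinite simplicity, finite generation, and Mal'cev's Theorem) more explicitly than the paper does.
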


\begin{remark}
    We note that in \cite{fullgroups:characters} a classification of characters of certain full groups was already carried out. However, those full groups are not amenable, and therefore a character classification does not help towards determining whether they are (locally) HS-stable. Moreover, we point out that the full groups from \cite{fullgroups:characters} are ``character rigid'' in the sense that, besides the regular character, all characters come from the abelianization of the group. Hence they behave very differently from the amenable topological full groups considered above, which admit a much richer character theory.
\end{remark}

\section{Failure of local HS-stability for groups with Property (T)}\label{sec:T}

In contrast with amenability, infinite property (T) groups are never P-stable, respectively HS-stable, assuming they are sofic, respectively hyperlinear. This was proven by Becker and Lubotzky in \cite{BL:T}, and it is natural to wonder whether the same holds for local stability (note that property (T) groups need not be finitely presented \cite[Section 3.4]{T}). This question is attributed to Lubotzky \cite[Question~6.5]{bradford}, and the goal of this section is to answer it affirmatively:

\begin{theorem}[Theorem \ref{intro:thm:T}]
\label{thm:T}
Assume $\Gamma$ is an infinite group with property (T).
\begin{enumerate}
    \item If $\Gamma$ is hyperlinear, then it is not locally HS-stable.
    \item If $\Gamma$ is sofic, then it is not locally P-stable.
\end{enumerate}
\end{theorem}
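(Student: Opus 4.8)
The plan is to adapt the Becker--Lubotzky argument for classical stability, working throughout with partial homomorphisms instead of genuine ones, and paying attention to the fact that property (T) groups may fail to be finitely presented. Let me describe the hyperlinear case; the sofic case is entirely analogous upon replacing $\cU$ by $\cS$ (using Lemma~\ref{lem:StoU} to pass between the two when convenient). Suppose for contradiction that $\Gamma$ is infinite, hyperlinear, property (T), and locally HS-stable. Since $\Gamma$ is hyperlinear it admits a separating approximate homomorphism $\vphi_n:\Gamma\to\U(k_n)$, which by Remark~\ref{rmk:amplification} we may take to be a hyperlinear approximation (i.e. maximally separating: $\|\vphi_n(g)-\mathbbm{1}\|_2\to\sqrt2$ for all $e\ne g$). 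By local HS-stability, $\vphi_n$ is close to a partial homomorphism $\psi_n:\Gamma\to\U(k_n)$. As in the proof of Lemma~\ref{lem:stable:LEF}, closeness together with the maximal separation of $\vphi_n$ forces $\psi_n$ to be eventually injective on every finite set, so $(\psi_n)_n$ is a separating partial homomorphism and in particular $\Gamma$ is LEF by Lemma~\ref{lem:LEF:hyperlinear}.

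The heart of the argument is to use property (T) to upgrade each partial homomorphism $\psi_n$ to a genuine finite-dimensional representation which is still close to $\vphi_n$, thereby showing that $\Gamma$ is in fact residually finite with a sequence of finite-dimensional representations separating points and approximating the regular character --- which will contradict property (T) for an infinite group. Concretely, fix a generating set and let $\pi:\bF\to\Gamma$ be the corresponding marking; let $f_n:\bF\to\U(k_n)$ be the homomorphisms lifting $\psi_n$ as in Lemma~\ref{lem:lift:descend}(1), so that $f_n$ is a partial homomorphism for $\Gamma$. For each relator $r$ of $\Gamma$ (equivalently, each $r\in\ker\pi$), we have $f_n(r)=\mathbbm{1}$ for $n$ large; but $\ker\pi$ may be infinitely generated, so this does not immediately give a representation of $\Gamma$. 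Here is where property (T) enters: by the Kazhdan property, there is a finite subset $Q\subset\ker\pi$ (depending only on a finite generating set of $\Gamma$ and the Kazhdan constant) and $\delta>0$ such that any unitary representation of $\bF$ in which every element of $Q$ acts within $\delta$ of the identity (in the appropriate norm) admits an invariant vector; applying this to the representation $\bigoplus$ of the isometric part, or rather running the standard ``almost-invariant implies invariant'' argument at the level of the $\bF$-action, one finds a genuine homomorphism $\rho_n:\Gamma\to\U(k_n)$ (or into a corner of $\U(k_n)$) with $\|\rho_n(g)-\vphi_n(g)\|_2\to0$. This is precisely the mechanism by which Becker--Lubotzky show that a property (T) group is stable only if it is residually finite; the only new point is that the input is a partial homomorphism rather than an approximate one, but partial homomorphisms are approximate homomorphisms, and being eventually multiplicative on arbitrarily large finite sets (in particular on $Q$) is more than enough to feed into the (T)-averaging.

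Once we have the genuine representations $\rho_n:\Gamma\to\U(k_n)$ with $\|\rho_n(g)-\vphi_n(g)\|_2\to0$, since $\vphi_n$ was a hyperlinear approximation we get $\|\rho_n(g)-\mathbbm{1}\|_2\to\sqrt2$ for every $e\ne g$, so $\tau_{k_n}\circ\rho_n\to\delta_e$ pointwise. But for an infinite property (T) group this is impossible: by a theorem of Wang (or directly, by a standard spectral-gap argument), if the normalized traces of finite-dimensional representations of a property (T) group converge pointwise to $\delta_e$, then $\Gamma$ must be finite --- indeed property (T) forces the trivial representation to be isolated in the Fell topology among unitary representations, so the $\rho_n$ cannot ``weakly converge to the regular representation'' unless $\Gamma$ is finite. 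This contradicts $\Gamma$ infinite, completing the proof. The main obstacle, and the step that requires the most care, is making the (T)-averaging work cleanly at the level of partial homomorphisms landing in finite-dimensional unitary groups while keeping the output close to $\vphi_n$ in the normalized Hilbert--Schmidt norm (one must control how the Kazhdan constant interacts with the normalized trace and ensure the correction eating into a subspace of vanishing relative dimension); for the sofic/permutation version one additionally has to track that the analogous averaging can be performed inside symmetric groups, which is exactly the content of the Becker--Lubotzky argument and goes through verbatim with ``partial'' in place of ``approximate.''
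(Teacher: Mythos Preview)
Your proposal has a genuine gap at the endgame. The claim that an infinite property (T) group cannot admit finite-dimensional representations $\rho_n$ with $\tau_{k_n}\circ\rho_n \to \delta_e$ pointwise is simply false: take $\Gamma = \mathrm{SL}_3(\Z)$ and pull back the regular representations of the finite quotients $\mathrm{SL}_3(\Z/m\Z)$. Property (T) says the \emph{trivial} representation is isolated in the unitary dual; it says nothing about the regular character being unreachable by finite-dimensional traces, and Wang's theorem (finitely many irreducibles in each fixed dimension) does not help since $k_n\to\infty$. So even if everything before it worked, your argument would not conclude.

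Your step 4 is also problematic. You invoke property (T) to ``upgrade'' a partial homomorphism $\psi_n:\Gamma\to\U(k_n)$ to a genuine representation of $\Gamma$, but property (T) does not furnish a finite set $Q\subset\ker\pi$ with the effect you describe; what it does is upgrade almost-\emph{intertwiners} between two genuine representations to genuine intertwiners. A statement of the form ``HS-approximate representations are close to genuine ones'' would itself be a form of HS-stability, the opposite of what is being proved. You can use Shalom's theorem to get genuine representations of a finitely presented (T) cover $\Gamma_0\twoheadrightarrow\Gamma$, but those are representations of $\Gamma_0$, not $\Gamma$, and their traces converge to the indicator of $\ker(\Gamma_0\to\Gamma)$, which is again no contradiction. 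The paper's argument is structurally different: after passing to $\Gamma_0$ via Shalom, it builds a partial homomorphism of $\Gamma$ that eventually agrees with an \emph{irreducible} representation $\sigma_n$ of $\Gamma_0$ of dimension $k_n\to\infty$, applies the Becker--Lubotzky dimension-shift perturbation into $\U(k_n-1)$, uses local HS-stability to get a nearby partial homomorphism (hence, via finite presentation of $\Gamma_0$, a genuine $(k_n{-}1)$-dimensional representation of $\Gamma_0$), and then applies property (T) of $\Gamma_0$ to upgrade the almost-intertwiner $\iota_n:\C^{k_n-1}\hookrightarrow\C^{k_n}$ to a genuine one, which Schur's Lemma forces to vanish --- a contradiction. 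The crucial point you are missing is that property (T) is applied to correct an approximate \emph{morphism} between genuine representations of $\Gamma_0$, not to correct an approximate representation.
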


Our proof will rely on Shalom's Theorem \cite{fpT}, which states that every property (T) group is a quotient of a finitely presented property (T) group. Therefore, we start by investigating LEF-quotients of finitely presented property (T) groups. For a residually finite property (T) group $\Gamma$, one can construct a sequence of irreducible representations with increasing dimensions by taking irreducible components of the canonical action of $\Gamma$ on $\ell^2(\Gamma/\Gamma_{k})\ominus \ell^2(\Gamma/\Gamma_{k-1})$, where $(\Gamma_k)_k$ is a strictly decreasing sequence of finite index subgroups in $\Gamma$. We adapt this construction to prove more generally that if a finitely presented property (T) group $\Gamma_0$ factors onto an infinite LEF group $\Gamma$, then $\Gamma_0$ admits irreducible finite-dimensional representations of arbitrarily high dimension which descend to a partial homomorphism of $\Gamma$.

\begin{lemma}\label{lem:irreps}
Assume $\Gamma$ and $\Gamma_0$ are infinite property (T) groups, $\Gamma$ is LEF, $\Gamma_0$ is finitely presented, and there is a surjective homomorphism $f:\Gamma_0\to\Gamma$. Then the following hold.
\begin{enumerate}[itemsep=5pt]
    \item There exist a sequence of positive integers $k_n\to\infty$ and sequences of maps $\rho_n:\Gamma\to \U(k_n)$ and $\sigma_n:\Gamma_0\to \U(k_n)$ such that:
        \begin{itemize}[itemsep=3pt,topsep=3pt]
            \item $(\rho_n)_n$ is a partial homomorphism of $\Gamma$,
            \item $\sigma_n$ is an irreducible representation of $\Gamma_0$ for every $n$, and
            \item for every $g\in\Gamma_0$, there exists $N\in\N$ such that for all $n\geq N$: $\rho_n(f(g)) = \sigma_n(g)$.
        \end{itemize}
    \item There exist a sequence of positive integers $t_n \to \infty$ and sequences of maps $\alpha_n:\Gamma\to \Sym(t_n)$ and $\beta_n:\Gamma_0\to \Sym(t_n)$ such that 
        \begin{itemize}[itemsep=3pt,topsep=3pt]
            \item $(\alpha_n)_n$ is a partial homomorphism,
            \item $\beta_n$ is a transitive action of $\Gamma_0$ for all $n$, and
            \item for every $g\in\Gamma_0$, there exists $N\in\N$ such that for all $n\geq N$: $\alpha_n(f(g)) = \beta_n(g)$.
        \end{itemize}
\end{enumerate}
\end{lemma}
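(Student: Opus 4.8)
\emph{Proof proposal.} The plan is to adapt the classical construction of arbitrarily large-dimensional irreducible representations of an infinite residually finite property (T) group, replacing the chain of finite-index subgroups by a sequence of \emph{finite quotients that converges to $\Gamma$} in the space of marked groups, and recovering the partial homomorphism of $\Gamma$ from the LEF approximations by $\Gamma$. First I would set up these finite quotients. Since $\Gamma$ has property (T) it is finitely generated; fix a finite generating set $S$ of $\Gamma$ and lift it to a finite generating set $\tilde S$ of $\Gamma_0$ with $f$ mapping $\tilde S$ bijectively onto $S$, giving markings $\tilde\pi\colon\bF\to\Gamma_0$ and $\pi:=f\circ\tilde\pi\colon\bF\to\Gamma$ of the free group $\bF$ on $S$. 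As $\Gamma$ is LEF, Lemma~\ref{lem:marked:LEF} and Theorem~\ref{thm:LE:limit} provide finite $S$-marked groups $F_n$, with markings $\pi_n\colon\bF\to F_n$ converging to $\pi$ and partial homomorphisms $\phi_n\colon\Gamma\to F_n$ such that $\phi_n(\pi(w))=\pi_n(w)$ for each fixed $w\in\bF$ once $n$ is large; since $\phi_n$ is injective on sets exhausting the infinite group $\Gamma$, we have $|F_n|\to\infty$. Because $\Gamma_0$ is finitely presented, each of its finitely many defining relators lies in $\ker\tilde\pi\subseteq\ker\pi=\liminf_n\ker\pi_n$, so for $n$ large $\pi_n$ factors as $\pi_n=\bar\pi_n\circ\tilde\pi$ for a surjection $\bar\pi_n\colon\Gamma_0\to F_n$, and unwinding the definitions $\bar\pi_n(g)=\phi_n(f(g))$ for each fixed $g\in\Gamma_0$ once $n$ is large.

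Next I would produce the representations. For $n$ large, view the left regular representation $\lambda_n$ of $F_n$ as a unitary representation of $\Gamma_0$ via $\bar\pi_n$, and write $\lambda_n=\bigoplus_{\theta\in\widehat{F_n}}\theta^{\oplus\dim\theta}$. The key point is that the non-trivial irreducible constituents have unbounded dimension, more precisely that for every $d$ there is $N_d$ so that $\widehat{F_n}$ contains a non-trivial irreducible of dimension $>d$ whenever $n\ge N_d$. To prove this, suppose all irreducible character degrees of $F_n$ are $\le d$. Then by the Isaacs--Passman theorem on groups with bounded character degrees, $F_n$ contains an abelian subgroup $A_n$ of index $\le C(d)$, with $C(d)$ depending only on $d$; its preimage $\Delta_n:=\bar\pi_n^{-1}(A_n)$ is a subgroup of $\Gamma_0$ of index $\le C(d)$ which maps onto $A_n$. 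As $\Gamma_0$ has property (T), so does the finite-index subgroup $\Delta_n$, hence $\Delta_n^{\mathrm{ab}}$ is finite and $|A_n|\le|\Delta_n^{\mathrm{ab}}|$; but the finitely generated group $\Gamma_0$ has only finitely many subgroups of index $\le C(d)$, so $|\Delta_n^{\mathrm{ab}}|$, and therefore $|F_n|=[F_n:A_n]\,|A_n|$, is bounded independently of $n$, contradicting $|F_n|\to\infty$. (Each such non-trivial $\theta$ automatically has no almost-invariant vectors for the Kazhdan pair of $\Gamma_0$, should this be needed in the subsequent application.)

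It then only remains to assemble the data. Set $k_n:=\max\{\dim\theta:\theta\in\widehat{F_n}\setminus\{1\}\}$, so $k_n\to\infty$ by the previous paragraph; choose a non-trivial $\widetilde\sigma_n\in\widehat{F_n}$ of dimension $k_n$ and put $\sigma_n:=\widetilde\sigma_n\circ\bar\pi_n\colon\Gamma_0\to\U(k_n)$, which is irreducible since $\bar\pi_n$ is onto, and $\rho_n:=\widetilde\sigma_n\circ\phi_n\colon\Gamma\to\U(k_n)$. Since $\widetilde\sigma_n$ is a genuine homomorphism and $\phi_n$ is a partial homomorphism of $\Gamma$, so is $\rho_n$; and $\rho_n(f(g))=\widetilde\sigma_n(\phi_n(f(g)))=\widetilde\sigma_n(\bar\pi_n(g))=\sigma_n(g)$ for $n$ large, which gives (1) (for the finitely many exceptional small $n$, take everything trivial). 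For (2), let $\beta_n\colon\Gamma_0\to\Sym(F_n)$ be $\bar\pi_n$ followed by the left-translation action of $F_n$ on itself — a transitive action of degree $t_n:=|F_n|\to\infty$ — and let $\alpha_n\colon\Gamma\to\Sym(F_n)$ send $g$ to left translation by $\phi_n(g)$; the same reasoning shows $\alpha_n$ is a partial homomorphism of $\Gamma$ with $\alpha_n(f(g))=\beta_n(g)$ for $n$ large.

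The bookkeeping with marked-group convergence is routine; the one genuinely substantial point is the dimension bound in the second paragraph, i.e.\ ruling out that the finite quotients $F_n$ have uniformly bounded representation dimension. I expect the cleanest route to be the one above, via the Isaacs--Passman bound combined with finiteness of abelianizations of finite-index subgroups of property (T) groups; an alternative is Weil local rigidity of the finite-dimensional representations of $\Gamma_0$ together with a compactness argument, but that requires care because the locus of irreducible representations need not be compact.
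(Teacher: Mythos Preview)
Your proposal is correct. The setup in your first paragraph and the treatment of part~(2) are essentially identical to the paper's proof: both build finite quotients $F_n$ of $\Gamma_0$ from the LEF approximation of $\Gamma$ using the finite presentation of $\Gamma_0$, and both take the regular action of $F_n$ on itself for the transitive permutation actions.

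The genuine difference is in how you obtain irreducibles of unbounded dimension for part~(1). The paper proceeds \emph{inductively}: it picks, at each step, an element $g\in\Gamma_0$ lying in the kernels of all previously chosen representations but with $f(g)\neq e$, uses the LEF separation to find an $F_{m_n}$ detecting $g$, and extracts from its regular representation an irreducible component not killing $g$; this produces infinitely many pairwise distinct finite-dimensional irreducibles of $\Gamma_0$, and then one invokes Wang's theorem (a property~(T) group has only finitely many irreducibles of any given dimension) to force the dimensions to infinity. Your route is instead a direct contradiction on the degrees of $F_n$: if all irreducible degrees of $F_n$ were bounded by $d$, Isaacs--Passman gives an abelian subgroup of bounded index, its preimage in $\Gamma_0$ has property~(T) hence finite abelianization, and there are only finitely many such preimages, bounding $|F_n|$. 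Your argument avoids the inductive bookkeeping and the need to track kernels across different indices, at the price of importing a somewhat heavier character-theoretic input (Isaacs--Passman rather than Wang). Both approaches ultimately hinge on the same two structural facts about property~(T) --- finite generation and finiteness of abelianizations of finite-index subgroups --- packaged slightly differently.
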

\begin{proof}
Fix finite generating sets $S_0$ and $S=f(S_0)$ for $\Gamma_0$ and $\Gamma$ respectively, and fix a finite set of relations $R$ such that $\Gamma_0 = \langle S_0\mid R\rangle$. We also denote by $B(n)$, respectively $B_0(n)$, the ball of radius $n$ inside $\Gamma$ for the word metric induced by $S$, respectively inside $\Gamma_0$ for $S_0$. Note that by construction $f(B_0(n))=B(n)$.

Since $\Gamma$ is LEF, there exist finite groups $F_n$ and an injective  partial homomorphism (see Remark~\ref{rk:partial:localemb}) $\vphi_n:\Gamma\to F_n$. By passing to a subsequence if necessary, we can assume that $\vphi_n|_{B(n)}$ is injective and for all $g,h\in B(n)$: $\vphi_n(gh)=\vphi_n(g)\vphi_n(h)$.

Construct homomorphisms $\psi_n\colon \bF_{S_0}\to F_n$ by $\psi_n(s)=\vphi_n(f(s))$ for $s\in S_0$. For large enough $n$ we then have $R\subseteq B_{\bF_{S_0}}(n)$, i.e. $\psi_n(R)=\{1_{F_n}\}$, and hence $\psi_n$ can be considered as a homomorphism from $\Gamma_0$. Since any finitely generated group admits only finitely many quotients of a given order, it moreover follows that $|F_n| \to \infty$. Note that $\psi_n|_{B_0(n)} = \vphi_n\circ f|_{B_0(n)}$. 
If necessary, we can then adjust $\vphi_n$ outside of $B(n)$ to make sure that $\vphi_n(\Gamma)\subseteq\psi_n(\Gamma_0)$. This allows us to assume that $F_n=\psi_n(\Gamma_0)$.

Let $\tau_n$ be the regular permutation representation of $F_n$. We let $\alpha_n \coloneqq \tau_n \circ \vphi_n$ and $\beta_n \coloneqq \tau_n \circ \psi_n$. Then the second part of the lemma is satisfied. Note that for this part we did not need the assumption of property (T).

For the first part, we need some extra constructions to ensure that the resulting representations are irreducible. We denote the left regular representation of $F_n$ by $\lambda_n$, and note that $\lambda_n\circ\psi_n$ is a finite-dimensional representation of $\Gamma_0$ for every $n$. We denote the kernel of $\lambda_n\circ\psi_n$ by $K_n$, and the kernel of $f$ by $K_0$.

We construct $\rho_n$ and $\sigma_n$ by induction. Let $\sigma_1$ be any irreducible component of $\lambda_1\circ\psi_1$, and let $\rho_1$ be $\lambda_1\circ \vphi_1$ with its range restricted to this irreducible component. 

Now assume $\rho_1,\ldots,\rho_n$ and $\sigma_1,\ldots,\sigma_n$ have been constructed. Since $\Gamma$ is infinite and $\cap_{i=1}^n K_i$ is a finite index subgroup, there exists $g\in\left(\cap_{i=1}^n K_i\right)\setminus K_0$. By assumption $\psi_m(g)=\vphi_m(f(g))\neq\mathbbm{1}$ for $m$ large enough, and thus we can find $m_n\in\N$ such that $g\notin K_{m_n}$ and $\lambda_{m_n}\circ\psi_{m_n}(g)\neq\mathbbm{1}$. In particular, we can find an irreducible component $\sigma_{n+1}$ of $\lambda_{m_n}\circ\psi_{m_n}$ such that $\sigma_{n+1}(g)\neq\mathbbm{1}$. Since by construction $\sigma_i(g)=\mathbbm{1}$ for $1\leq i\leq n$, we conclude that $\sigma_{n+1}$ is different from $\sigma_i$ for $1\leq i\leq n$. 

Since property (T) groups only admit finitely many irreducible representations of any given finite dimension by \cite[Theorem~2.6]{fd:reps:T}), we  thus necessarily have $k_n\coloneqq \dim(\sigma_n)\to\infty$. Define $\rho_{n+1}$ to be $\lambda_{m_n}\circ\vphi_{m_n}$ with its range restricted to this irreducible component. It is now straightforward to check that $\rho_n$ and $\sigma_n$ satisfy all requirements from the lemma. 
\end{proof}

Before proving the main theorem of this section (Theorem~\ref{thm:T}), we need the following construction from \cite{BL:T}. Recall that $\Sigma_n : \Sym(n) \to \U(n)$ is the standard permutation representation (Lemma \ref{lem:StoU}).

\begin{lemma}[{\cite[Proposition~2.3]{BL:T}}]\label{lem:BLTperturb}
    Let $\Gamma$ be a countable group with a fixed finite generating set $S$, $\pi:\bF_S\to\Gamma$, and let $\sigma:\Gamma\to \U(n)$ be a representation of $\Gamma$. Then there exists a homomorphism $\tau:\bF_S\to \U(n-1)$ such that
    \begin{enumerate}
        \item $\forall w\in \ker(\pi): \norm{\tau(w)-\mathbbm{1}}_2\leq \frac{3^{\abs{w}}}{\sqrt{n-1}}$.
        \item The inclusion map $\iota:\C^{n-1}\to \C^n$ satisfies for all $s\in S$:
        \[
        \norm{\sigma(s^{-1})\circ\iota\circ\tau(s) - \iota}_2 \leq \frac{2}{\sqrt{n}}.
        \]
    \end{enumerate}
Moreover, if $\sigma$ is of the form $\Sigma_n \circ \alpha$ for some $\alpha : \bF_S \to \Sym(n)$, then $\tau$ can be chosen to be of the form $\Sigma_{n-1} \circ \beta$, for some $\beta : \bF_S \to \Sym(n-1)$.
\end{lemma}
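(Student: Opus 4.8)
The plan is to construct $\tau$ by hand, by ``spending'' one dimension of the honest representation $\sigma$ to produce a unit vector fixed by the images of all generators. Throughout write $\sigma(s)$ for $\sigma(\pi(s))$, and take $v := e_n \in \C^n$, so that $\iota : \C^{n-1} \to \C^n$ is the standard inclusion onto $v^\perp = \operatorname{span}(e_1, \dots, e_{n-1})$. For each $s \in S$ set $w_s := \sigma(s)^{-1} v$ and pick a unitary $R_s \in \U(n)$ supported on the (at most two-dimensional) subspace $\operatorname{span}\{v, w_s\}$ with $R_s v = w_s$, chosen to minimise $\norm{R_s - \mathbbm{1}}_{\mathrm{HS}}$; a direct computation inside that subspace gives the two bounds $\norm{R_s - \mathbbm{1}}_{\mathrm{HS}}^2 \le 2\,\norm{v - w_s}^2 \le 8$ and $\norm{(R_s - \mathbbm{1})\,\iota}_{\mathrm{HS}}^2 = \norm{R_s - \mathbbm{1}}_{\mathrm{HS}}^2 - \norm{(R_s - \mathbbm{1})v}^2 \le \norm{v - w_s}^2 \le 4$. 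I would then set $\widehat\tau(s) := \sigma(s)\,R_s$; since $\widehat\tau(s) v = \sigma(s)\sigma(s)^{-1} v = v$, the unitary $\widehat\tau(s)$ preserves $v^\perp$, so extending $s \mapsto \widehat\tau(s)$ to a homomorphism $\widehat\tau : \bF_S \to \U(n)$, every $\widehat\tau(w)$ fixes $v$ and hence has block form $\tau(w) \oplus 1$ for a unique $\tau(w) \in \U(n-1)$. This $\tau : \bF_S \to \U(n-1)$ is the desired homomorphism.

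Property~(2) is then nearly immediate: for $x \in v^\perp$ one has $\iota\,\tau(s)\,x = \widehat\tau(s)\,x = \sigma(s) R_s x$, so $\sigma(s^{-1}) \circ \iota \circ \tau(s) - \iota = (R_s - \mathbbm{1})\circ\iota$ as maps $\C^{n-1} \to \C^n$, and the second bound above gives $\norm{\sigma(s^{-1})\iota\tau(s) - \iota}_2 = \tfrac{1}{\sqrt n}\norm{(R_s - \mathbbm{1})\iota}_{\mathrm{HS}} \le 2/\sqrt n$. For property~(1) I would first record the per-letter estimate $\norm{\widehat\tau(s) - \sigma(s)}_2 = \norm{R_s - \mathbbm{1}}_2 \le \sqrt 2\,\norm{v - w_s}/\sqrt n \le 3/\sqrt n$, which, by unitary invariance of $\norm\cdot_2$ (since $\widehat\tau(s^{-1})$ and $\sigma(s^{-1})$ are the inverses of the previous unitaries), also bounds $\norm{\widehat\tau(s^{-1}) - \sigma(s^{-1})}_2$. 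The standard telescoping inequality for products of unitaries over the $\abs w$ letters of a word $w$ then yields $\norm{\widehat\tau(w) - \sigma(\pi(w))}_2 \le 3\abs w/\sqrt n$; for $w \in \ker\pi$ we have $\sigma(\pi(w)) = \mathbbm{1}$, whence $\norm{\tau(w) - \mathbbm{1}_{n-1}}_2 = \tfrac{\sqrt n}{\sqrt{n-1}}\,\norm{\widehat\tau(w) - \mathbbm{1}_n}_2 \le 3\abs w/\sqrt{n-1} \le 3^{\abs w}/\sqrt{n-1}$, the last inequality being crude but all that is claimed (and $w = e$ is trivial).

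For the ``moreover'' clause, a rotation $R_s$ is not a permutation matrix, so when $\sigma = \Sigma_n \circ \alpha$ I would instead perform a combinatorial surgery. Still with $v = e_n$, for each generator $s$ put $i_s := \alpha(s)^{-1}(n)$ and let $\beta(s)$ be the permutation $\alpha(s)\cdot(i_s\; n) \in \Sym(n)$; it fixes $n$, so it restricts to an element of $\Sym(n-1)$, still called $\beta(s)$. Extend $\beta$ to a homomorphism $\bF_S \to \Sym(n-1)$ and set $\tau := \Sigma_{n-1}\circ\beta$. Viewing every $\beta(w)$ again as a permutation of $\{1,\dots,n\}$ fixing $n$, and pushing each transposition $(i_s\; n)$ to the right through the surrounding $\alpha(\cdot)$'s (which replaces it by a conjugate transposition), one obtains $\beta(w) = \alpha(w)\, t_1 \cdots t_{\abs w}$ with each $t_j$ a transposition, so $\beta(w)$ differs from $\alpha(w)$ in at most $2\abs w$ points. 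Property~(1) then follows from Lemma~\ref{lem:StoU}: for $w \in \ker\pi$ we have $\alpha(w) = \mathrm{id}$, so $\norm{\Sigma_{n-1}(\beta(w)) - \mathbbm{1}}_2 = \sqrt{2\,\dHamm(\beta(w),\mathrm{id})} \le 2\sqrt{\abs w}/\sqrt{n-1} \le 3^{\abs w}/\sqrt{n-1}$; and property~(2) is the direct check that $\sigma(s^{-1})\iota\Sigma_{n-1}(\beta(s)) - \iota$ vanishes on every $e_k$ with $k \ne i_s$ and sends $e_{i_s} \mapsto e_n - e_{i_s}$, hence has normalised Hilbert--Schmidt norm $\sqrt 2/\sqrt n \le 2/\sqrt n$.

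I do not anticipate a genuine obstacle: once the two constructions are written down, the verification reduces to the two-dimensional norm identity for $R_s$, the telescoping inequality for products of unitaries, and counting points moved by a product of at most $\abs w$ transpositions. The only thing to keep an eye on is that the constants land as $2/\sqrt n$ in (2) and $3^{\abs w}/\sqrt{n-1}$ in (1); since the latter is exponentially weaker than what the argument actually produces (a linear-in-$\abs w$ bound), there is ample slack, and the real content is simply finding the right $\tau$ --- a common fixed vector for $\widehat\tau$ in the unitary case, and the twist by $(i_s\; n)$ in the permutation case.
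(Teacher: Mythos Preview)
Your proposal is correct. The paper does not give its own proof of this lemma but simply cites \cite[Proposition~2.3]{BL:T}; your construction --- correcting each $\sigma(s)$ by a rank-two unitary (respectively a transposition) so that $e_n$ becomes a common fixed vector, and then restricting to $e_n^\perp$ --- is exactly the argument of Becker--Lubotzky, and your estimates (the two-dimensional computation for $R_s$, the telescoping bound, and the transposition count) recover the stated constants with room to spare.
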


We will now use this to prove Theorem~\ref{thm:T}.

\begin{proof}[Proof of Theorem~\ref{thm:T}]
(1) Assume $\Gamma$ is hyperlinear and locally HS-stable. If $\Gamma$ is finitely presented, then $\Gamma$ is HS-stable by Lemma~\ref{lem:fp:local}, and the result follows from \cite[Theorem~1.3]{BL:T}. Assume $\Gamma$ is not finitely presented. Then by Shalom's Theorem \cite{fpT}, we can find a finitely presented property (T) group $\Gamma_0$ and a surjective homomorphism $f:\Gamma_0\to\Gamma$. Identify a fixed generating set $S=f(S)$ for $\Gamma_0$ and $\Gamma$, and consider the corresponding marking $\pi:\bF_{S}\to \Gamma_0$ for $\Gamma_0$.

Consider $\rho_n:\Gamma\to \U(k_n)$ and $\sigma_n:\Gamma_0\to \U(k_n)$ as in Lemma~\ref{lem:irreps}. Applying Lemma~\ref{lem:BLTperturb}, we get homomorphisms
\[
\tau_n: \bF_S\to \U(k_n-1)
\]
such that
\begin{itemize}
    \item $\forall w\in \ker(\pi): \norm{\tau_n(w)-\mathbbm{1}}_2\leq \frac{3^{\abs{w}}}{\sqrt{k_n-1}}$, and
    \item The inclusion maps $\iota_n:\C^{k_n-1}\to \C^{k_n}$ satisfy for all $s\in S$:
    \begin{equation}\label{eq:perturb}
    \norm{\rho_n(s^{-1})\circ\iota_n\circ\tau_n(s) - \iota_n}_2 = \norm{\sigma_n(s^{-1})\circ\iota_n\circ\tau_n(s) - \iota_n}_2 \leq \frac{2}{\sqrt{k_n}}.
    \end{equation}
\end{itemize}
In particular, it follows as in Lemma~\ref{lem:lift:descend} that $\tau_n$ yields an approximate homomorphism $\tau_n:\Gamma\to \U(k_n-1)$. By assumption $\Gamma$ is locally HS-stable, hence we can find a partial homomorphism
\[
\vphi_n:\Gamma\to \U(k_n-1)
\]
such that for all $g\in\Gamma$
\[
\norm{\tau_n(g) - \vphi_n(g)}_2\to 0.
\]
Since $\vphi_n$ is a partial homomorphism and $\Gamma_0$ is finitely presented, we can adjust $\vphi_n$ outside the ball $B(n)$ of radius $n$ so that for large enough $n$, $\vphi_n\circ f:\Gamma_0\to \U(k_n-1)$ is a genuine homomorphism. Together with \eqref{eq:perturb}, we then see that for every $s\in S$:
\[
\norm{\sigma_n(s^{-1})\circ\iota_n\circ \vphi_n(f(s)) - \iota_n}_2 \leq \eps_n
\]
for some $\eps_n\to 0$. Since $\Gamma_0$ has property (T), it thus follows that there exist morphisms of representations $\theta_n:\C^{k_n-1}\to\C^{k_n}$ from $\vphi_n\circ f$ to $\sigma_n$ such that 
\begin{equation}\label{eq:morphisms}
\norm{\iota_n - \theta_n}_2\to 0,
\end{equation}
see for instance \cite[Lemma~2.5]{BL:T}. However, applying Schur's Lemma implies that $\theta_n = 0$, contradicting \eqref{eq:morphisms}. This finishes the proof of (1). 

The proof of (2) proceeds similarly, upon using the second part of Lemma~\ref{lem:irreps} and the moreover part of Lemma~\ref{lem:BLTperturb}, as well as \cite[Proposition~2.4(ii)]{BL:T} instead of Schur's Lemma. We leave the details to the interested reader.
\end{proof}

\footnotesize

\bibliographystyle{alpha}
\bibliography{ref}

\end{document}